\documentclass[leqno,11pt]{amsart}
\usepackage{amssymb, amsmath}
\usepackage{amsthm, amsfonts,mathrsfs}
 \setlength{\oddsidemargin}{0mm}
\setlength{\evensidemargin}{0mm} \setlength{\topmargin}{-15mm}
\setlength{\textheight}{230mm} \setlength{\textwidth}{165mm}


\def\inte#1{
\displaystyle\mathop{#1\kern0pt}^\circ }


\let\pa=\partial
\let\al=\alpha

\let\d=\delta
\let\e=\varepsilon

\let\r=\rho

\let\f=\frac

\let\p=\psi

\let\D=\Delta

\let\Om=\Omega
\let\wt=\widetilde


\def\cB{{\mathcal B}}
\def\cC{{\mathcal C}}

\def\cF{{\mathcal F}}

\def\cL{{\mathcal L}}
\def\cM{{\mathcal M}}

\def\cR{{\mathcal R}}
\def\cS{{\mathcal S}}

\def\dH{\dot{H}}

\def\pa{\partial}
\def\grad{\nabla}
\def\dB{\dot{B}}


\def\dD{\dot{\Delta}}
\def\virgp{\raise 2pt\hbox{,}}
\def\cdotpv{\raise 2pt\hbox{;}}

\def\eqdefa{\buildrel\hbox{\footnotesize def}\over =}

\def\C{\mathop{\mathbb C\kern 0pt}\nolimits}
\def\DD{\mathop{\mathbb D\kern 0pt}\nolimits}
\def\EE{\mathop{{\mathbb E \kern 0pt}}\nolimits}
\def\K{\mathop{\mathbb K\kern 0pt}\nolimits}
\def\N{\mathop{\mathbb N\kern 0pt}\nolimits}
\def\Q{\mathop{\mathbb Q\kern 0pt}\nolimits}
\def\R{\mathop{\mathbb R\kern 0pt}\nolimits}
\def\SS{\mathop{\mathbb S\kern 0pt}\nolimits}
\def\ZZ{\mathop{\mathbb Z\kern 0pt}\nolimits}
\def\TT{\mathop{\mathbb T\kern 0pt}\nolimits}
\def\P{\mathop{\mathbb P\kern 0pt}\nolimits}

\newcommand{\la}{\lambda}

\newcommand{\Z}{{\ZZ}}

\def\dv{\mbox{div}}

\def\dive{\mathop{\rm div}\nolimits}

\def\Supp{\mathop{\rm Supp}\nolimits\ }


\def\no{\noindent}
\def\na{\nabla}
\def\p{\partial}
\def\th{\theta}

\newcommand{\w}[1]{\langle {#1} \rangle}
\newcommand{\beq}{\begin{equation}}
\newcommand{\eeq}{\end{equation}}
\newcommand{\ben}{\begin{eqnarray}}
\newcommand{\een}{\end{eqnarray}}
\newcommand{\beno}{\begin{eqnarray*}}
\newcommand{\eeno}{\end{eqnarray*}}

\newtheorem{defi}{Definition}[section]
\newtheorem{thm}{Theorem}[section]
\newtheorem{lem}{Lemma}[section]
\newtheorem{rmk}{Remark}[section]
\newtheorem{col}{Corollary}[section]
\newtheorem{prop}{Proposition}[section]
\renewcommand{\theequation}{\thesection.\arabic{equation}}

\begin{document}

\title[ Well-posedness of 2-D inhomogeneous Navier-Stokes system]
{On the  global well-posedness of 2-D density-dependent
Navier-Stokes system with variable viscosity}

\author[H.  Abidi]{Hammadi Abidi}
\address[H.  Abidi]{D\'epartement de Math\'ematiques
Facult\'e des Sciences de Tunis Campus universitaire 2092 Tunis,
Tunisia} \email{habidi@univ-evry.fr}
\author[P. Zhang]{Ping Zhang} \address[P. Zhang]{Academy of Mathematics $\&$ Systems Science
and  Hua Loo-Keng Key Laboratory of Mathematics, Chinese Academy of
Sciences, Beijing 100190, CHINA.} \email{zp@amss.ac.cn}

\date{Jan. 10, 2013}

\maketitle
\begin{abstract} Given solenoidal vector $u_0\in H^{-2\d}\cap H^1(\R^2),$ $\r_0-1\in L^2(\R^2),$ and  $\r_0 \in L^\infty\cap\dot{W}^{1,r}(\R^2)$
 with a positive lower bound
 for $\d\in (0,\f12)$ and
 $2<r<\f{2}{1-2\d},$ we prove that 2-D incompressible inhomogeneous
Navier-Stokes system \eqref{1.1} has a unique global solution
provided that the viscous coefficient $\mu(\r_0)$ is close enough to
$1$ in the $L^\infty$ norm  compared to the size of $\d$ and the
norms of the initial data. With smoother initial data, we can prove
the propagation of regularities for such solutions. Furthermore, for
$1<p<4,$ if $(\r_0-1,u_0)$ belongs to the critical Besov spaces
$\dB^{\f2p}_{p,1}(\R^2)\times \bigl(\dB^{-1+\f2p}_{p,1}\cap
L^2(\R^2)\bigr)$ and the $\dB^{\f2p}_{p,1}(\R^2)$ norm of $\r_0-1$
is sufficiently small compared to the exponential of
$\|u_0\|_{L^2}^2+\|u_0\|_{\dB^{-1+\f2p}_{p,1}},$ we  prove the
global well-posedness of \eqref{1.1} in the scaling invariant
spaces. Finally for initial data in the almost critical Besov
spaces, we prove the global well-posedness of \eqref{1.1} under the
assumption that the $L^\infty$ norm of $\r_0-1$ is sufficiently
small.
\end{abstract}

\noindent {\sl Keywords:} Inhomogeneous  Navier-Stokes systems,
Littlewood-Paley Theory, critical\par

 \qquad \quad \ \ \ regularity

\vskip 0.2cm

\noindent {\sl AMS Subject Classification (2000):} 35Q30, 76D03  \\

\renewcommand{\theequation}{\thesection.\arabic{equation}}
\setcounter{equation}{0}

\section{Introduction}

The purpose of this paper is to investigate the global
well-posedness of the following two-dimensional incompressible
inhomogeneous Navier-Stokes equations with variable viscous
coefficient \beq\label{1.1} \left\{\begin{array}{l}
\displaystyle \pa_t \rho + \dv (\rho u)=0,\qquad (t,x)\in\R^+\times\R^2, \\
\displaystyle \pa_t (\rho u) + \dv (\rho u\otimes u) -\dv (2\mu(\rho) d)+\grad\Pi=0, \\
\displaystyle \dv\, u = 0, \\
\displaystyle \rho|_{t=0}=\rho_0,\quad \rho u|_{t=0}=m_0,
\end{array}\right.
\eeq where $\rho, u=(u_1,u_2)$ stand for the density and velocity of
the fluid respectively, and
$d=\bigl(\frac{1}{2}(\pa_{i}u_{j}+\pa_{j} u_{i})\bigr)_{2\times2}$
denotes the deformation tensor, $\Pi$  is a scalar pressure
function,
 and  the
viscous coefficient $\mu(\rho)$ is a smooth, positive and
non-decreasing function on $[0,\infty).$  Such a system describes
for instance a  fluid that is  incompressible but has nonconstant
density owing to  the complex structure of the  flow due to a
mixture (e.g. blood flow) or pollution (e.g. model of rivers).  It
may also describe a fluid containing a melted substance.

When $\mu(\rho)$ is  a positive constant, and  the initial density
has a positive lower bound, Lady\v zenskaja and Solonnikov \cite{LS}
first addressed the question of unique solvability of (\ref{1.1}).
More precisely, they considered the system \eqref{1.1} in a bounded
domain $\Om$ with homogeneous Dirichlet boundary condition for $u.$
Under the assumptions that $u_0\in W^{2-\frac2p,p}(\Om)$ $(p>d)$ is
divergence free and vanishes on $\p\Om$ and that $\r_0\in C^1(\Om)$
is bounded away from zero, then they \cite{LS} proved
\begin{itemize}
\item Global well-posedness in dimension $d=2;$
\item Local well-posedness in dimension $d=3.$ If in addition $u_0$ is small in $W^{2-\frac2p,p}(\Om),$
then global well-posedness holds true.
\end{itemize}
 Danchin
\cite{danchin04} proved similar well-posedness result of \eqref{1.1}
in the whole space  and with the initial data in the almost critical
Sobolev spaces. In particular, in two space dimensions, he proved
the the global well-poseness of \eqref{1.1} with $\mu(\r)=\mu>0$
provided that the initial data $(\r_0,u_0)$ satisfies \beno
&&\r_0-1\in H^{1+\alpha}(\R^2) \ (\nabla \r_0\in
L^\infty(\R^2)\  \mbox{if}\  \alpha=1), \quad  \r_0\geq \underline b>0, \quad \mbox{ and}\\
&& u_0\in H^\beta(\R^2)\quad \mbox{for any}\quad \alpha>0,\quad
\beta\in(0,\alpha)\cap(\alpha-1,\alpha+1).\eeno Very recently,
Paicu, Zhang and Zhang \cite{PZZ1} proved the global well-posedness
of \eqref{1.1} with $\mu(\r)=\mu>0$ for initial data: $\r_0\in
L^\infty(\R^2)$ with a positive lower bound and $u_0\in H^s(\R^2)$
for any $s>0.$ This result improves the former interesting
well-posedness theorem of Danchin and Mucha \cite{DM2} by removing
the smallness assumption on the fluctuation to the initial density
and also with much less regularity for the initial velocity.

In  general,  Lions  \cite{LP} (see also the references therein, and
the monograph \cite{AKM}) proved the global existence of finite
energy weak solutions to \eqref{1.1}.  Yet the uniqueness and
regularities of such weak solutions are big open questions  even in
two space dimensions, as was mentioned by Lions in \cite{LP} (see
page 31-32 of \cite{LP}). Except  under the additional assumptions
that \beq\label{des} \|\mu(\rho_0)-1 \|_{L^\infty({\Bbb T}^2)}\leq
\varepsilon_0\quad \mbox{and}\quad u_0\in H^1({\Bbb T}^2),\eeq
Desjardins \cite{desjardins} proved that the global weak solution,
$(\r,u,\na\Pi),$ constructed in  \cite{LP} satisfies $u\in
L^\infty((0,T); H^1({\Bbb T}^2))$ and $\rho\in
L^\infty((0,T)\times{\Bbb T}^2)$  for any $T<\infty.$ Moreover, with
additional regularity assumptions on the initial data, he could also
prove that $u\in L^2((0,\tau); H^2({\Bbb T}^2))$ for some short time
$\tau$ (see Theorem \ref{Desj} below).

To understand the system \eqref{1.1} further, the second author to
this paper proved the global well-posedness to a modified 2-D model
system, which coincides with the 2-D inhomogeneous Navier-Stokes
equations with $\mu(\r)=\mu>0$ , with general initial data in
\cite{Zhang}. Gui and Zhang \cite{GZ} proved the global
well-posedness of \eqref{1.1} with initial data satisfying
$\|\rho_0-1\|_{H^{s+1}}$ being sufficiently small and $u_0\in
H^s\cap \dot{H}^{-2\d}(\R^2)$ for some $s>2$ and $0<\d<\f12.$ Yet
the exact size of $\|\rho_0-1\|_{H^{s+1}}$ was not presented in
\cite{GZ}. Huang,  Paicu and Zhang \cite{HPZ1} basically proved that
as long as
\begin{equation}\label{thmassume} \eta\eqdefa
\|{\r_0-1}\|_{B^{\f2p}_{p,1}}\exp\Bigl\{C_0\bigl(1+\mu^2(1)\bigr)\exp\bigl(\f{C_0}{\mu^2(1)}\|u_0\|_{B^{-1+\f2p}_{p,1}}^2\bigr)\Bigr\}
\leq \f{c_0\mu(1)}{1+\mu(1)},
\end{equation} for some sufficiently small $c_0,$
\eqref{1.1} has a  global solution so that $\r-1\in
\cC_b([0,\infty);B^{\f2p}_{p,1}(\R^2))$ and $u \in
\cC_b([0,\infty);B^{-1+\f2p}_{p,1}(\R^2))\cap
L^1(\mathbb{R}^+;B^{1+\f2p}_{p,1}(\R^2))$ for $1<p<4.$ In a recent
preprint  \cite{HP1},  Huang and Paicu can prove the global
existence of solution of \eqref{1.1} with much weaker assumption
than \eqref{thmassume}. Yet as there is no $L^1((0,T);Lip(\R^2))$
estimate for the velocity field, the uniqueness of such solutions is
not clear in \cite{HP1}.

Let $\mathcal{R}$ be the usual Riesz transform,
 $\mathcal{Q}\eqdefa \nabla(\Delta)^{-1}\dv,$  and
$\mathbb{P}\eqdefa I-\mathcal{Q}$ be the Leray projection operator
on the space of divergence-free vector fields, we first recall the
following result from Desjardins \cite{desjardins}:

\begin{thm}\label{Desj}
{\sl Let $\r\in L^\infty({\Bbb T}^2),$ $u_0\in H^1({\Bbb T}^2)$ with
$\dv\, u_0 = 0.$  Then there exists a positive constant
$\varepsilon_0$ such that under the assumption of \eqref{des}, Lions
weak solutions (\cite{LP}) to \eqref{1.1} satisfy the following
regularity properties  for all $T>0:$
\begin{itemize}
  \item $u\in L^\infty((0,T);\,H^1({\Bbb T}^2)) $ and $\sqrt{\rho}\partial_tu\in L^2((0,T)\times {\Bbb T}^2),$
  \item $\rho$ and $\mu(\rho)\in L^\infty((0,T)\times {\Bbb T}^2)\cap
  \cC([0,T];\,L^p({\Bbb T}^2))$ for all $p\in[1,\infty),$
  \item $\nabla(\Pi-\mathcal{R}_i\mathcal{R}_j(2\mu d_{ij}))$ and
  $\nabla(\mathbb{P}\bigotimes\mathcal{Q}(2\mu d))_{ij}\in L^2((0,T)\times {\Bbb T}^2),$
  \item $\Pi$ may be renormalized in such a way that for some
universal constant $C_0>0,$ \beq\label{upi} \Pi\quad\mbox{and}\quad
\nabla u\in L^2((0,T);\, L^p({\Bbb T}^2))\quad\mbox{for all}\,\;
p\in [4,p^*], \eeq where \beq\label{pstar}
\frac{1}{p^*}=2C_0\|\mu(\rho_0)-1\|_{L^\infty}. \eeq
\end{itemize}
Moreover, if $\mu(\r_0)\geq \underline{\mu}$ and $\log(\mu(\r_0))\in
W^{1,r}({\Bbb T}^2)$ for some $r>2,$ there exists some positive time
$\tau$ so that $u\in L^2((0,\tau); H^2({\Bbb T}^2))$ and $\mu(\r)\in
C([0,\tau];W^{1,\bar{r}}({\Bbb T}^2))$ for any $\bar{r}<r.$ }
\end{thm}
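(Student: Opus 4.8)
The plan is to turn Lions' finite-energy weak solution into the asserted stronger solution by a chain of a priori estimates, using the smallness hypothesis \eqref{des} only at the one point where an $L^p$ gain for $\nabla u$ with $p>2$ is needed. First I would record the basic energy balance obtained by testing the momentum equation in \eqref{1.1} against $u$: since $\dv\,u=0$ annihilates the pressure, one gets $\frac{d}{dt}\int \rho|u|^2+4\int\mu(\rho)|d|^2=0$, whence $\sqrt{\rho}\,u\in L^\infty((0,T);L^2)$ and, because $\mu(\rho)$ has a positive lower bound, $\nabla u\in L^2((0,T)\times\mathbb{T}^2)$. This is the only globally available bound, and it falls short of $H^1$; every later estimate is built on top of it, with the transport structure $\partial_t\rho+u\cdot\nabla\rho=0$ (so that $\rho$ and $\mu(\rho)$ are merely rearranged along the flow, giving the $L^\infty\cap\mathcal{C}([0,T];L^p)$ statement for free).

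The core estimate comes from testing the momentum equation against $\partial_t u$. The pressure term again drops because $\dv\,\partial_t u=0$, the viscous term produces $\frac{d}{dt}\int\mu(\rho)|d|^2$ up to a remainder in which $\partial_t\mu(\rho)=-u\cdot\nabla\mu(\rho)$ is substituted using the transport equation, leaving
\[
\int\rho|\partial_t u|^2+\frac{d}{dt}\int\mu(\rho)|d|^2 = -\int\bigl(u\cdot\nabla\mu(\rho)\bigr)|d|^2-\int\rho(u\cdot\nabla u)\cdot\partial_t u.
\]
The two terms on the right are the main obstacle: by Hölder and the two-dimensional Ladyzhenskaya inequality $\|u\|_{L^4}\lesssim\|u\|_{L^2}^{1/2}\|\nabla u\|_{L^2}^{1/2}$, the convection term is bounded by $\|\sqrt{\rho}\,\partial_t u\|_{L^2}\|u\|_{L^4}\|\nabla u\|_{L^4}$, and the first term (after integrating the derivative off $\mu(\rho)$ onto $|d|^2$) is of the same type. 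To absorb them into $\int\rho|\partial_t u|^2$ I need an a priori control of $\|\nabla u\|_{L^4}$, i.e. a gain of integrability over the energy bound that is \emph{not} available for a general Stokes system.

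That gain is exactly where \eqref{des} enters. Freezing time and reading $-\dv(2\mu(\rho)d)+\nabla\Pi=-\rho\partial_t u-\rho u\cdot\nabla u$ as a stationary Stokes-type system, I would exploit the algebraic identity $\partial_i\partial_j(2d_{ij})=\Delta\,\dv\,u=0$, which lets me write $\Delta\Pi=\partial_i\partial_j\bigl(2(\mu(\rho)-1)d_{ij}\bigr)-\dv(\rho\partial_t u+\rho u\cdot\nabla u)$; hence the leading part of the pressure is the Calderón--Zygmund object $\mathcal{R}_i\mathcal{R}_j(2\mu(\rho)d_{ij})$, and the difference $\nabla\bigl(\Pi-\mathcal{R}_i\mathcal{R}_j(2\mu d_{ij})\bigr)$ sees only the genuinely lower-order right-hand side, which gives item~3. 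Since $\|\mathcal{R}_i\mathcal{R}_j\|_{L^p\to L^p}$ grows like $p$, the perturbation $2(\mu(\rho)-1)d_{ij}$ can be resummed in a Neumann series as long as $p\,\|\mu(\rho_0)-1\|_{L^\infty}\lesssim 1$; this is the origin of \eqref{pstar} and yields $\nabla u,\Pi\in L^2((0,T);L^p)$ for $p\in[4,p^*]$, provided $\varepsilon_0$ is small enough that $p^*>4$.

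With $\nabla u\in L^2((0,T);L^4)$ in hand I would return to the previous display, interpolate $\|\nabla u\|_{L^4}$ between $\|\nabla u\|_{L^2}$ and the new $L^p$ bound, and close a Gronwall inequality for $\int\mu(\rho)|d|^2$ and $\int_0^T\!\!\int\rho|\partial_t u|^2$; together with the lower bound on $\mu(\rho)$ this gives $u\in L^\infty((0,T);H^1)$ and $\sqrt{\rho}\,\partial_t u\in L^2$ for every finite $T$. For the last statement I would propagate the extra regularity $\log\mu(\rho)\in W^{1,r}$ along the flow (it is transported, and $\nabla u\in L^1_{loc}(L^\infty)$ on a short interval follows from the $H^1$ bound together with the $L^r$ Stokes estimate for $r>2$) and feed it into the full $L^r$ elliptic regularity for the variable-coefficient Stokes operator to upgrade $\nabla u$ to $L^2(H^1)$, i.e. $u\in L^2((0,\tau);H^2)$, on a time $\tau$ short enough that the nonlinear terms stay subcritical. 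The delicate point throughout is the interlocking of the $\partial_t u$ energy estimate with the variable-coefficient Stokes estimate: neither closes on its own, and the smallness $\varepsilon_0$ is precisely what makes the coupled system of inequalities contract.
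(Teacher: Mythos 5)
Your proposal reconstructs most of the architecture of Desjardins' argument correctly (note that the paper itself only \emph{recalls} this theorem from \cite{desjardins}, but it reproduces the decisive computations in the proof of Proposition \ref{propb.1}): the basic energy balance, the Neumann-series absorption of $\mathcal{R}_i\mathcal{R}_j\bigl(2(\mu(\rho)-1)d_{ij}\bigr)$ based on $\|\mathcal{R}\|_{\mathcal{L}(L^p\to L^p)}\leq C_0\,p$, which is indeed the origin of \eqref{pstar}, the interpolation closing the loop at $p=4$ as in \eqref{b.7}--\eqref{b.10}, and the short-time $H^2$ bootstrap transporting $\log\mu(\rho_0)\in W^{1,r}$. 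However, there is a genuine gap at the single most delicate term of the $\partial_tu$ estimate. In your display the term $-\int\bigl(u\cdot\nabla\mu(\rho)\bigr)|d|^2\,dx$ appears, and you assert it is ``of the same type'' as the convection term ``after integrating the derivative off $\mu(\rho)$ onto $|d|^2$.'' That step fails: since $\dv\,u=0$, integration by parts produces $\int\mu(\rho)\,u\cdot\nabla\bigl(|d|^2\bigr)\,dx$, which contains \emph{second} derivatives of $u$, and no bound on $\nabla^2u$ is available at this stage ($u\in L^2((0,\tau);H^2)$ is precisely what is obtained only at the very end, locally in time, under the extra hypothesis on $\log\mu(\rho_0)$); nor can you leave the derivative on $\mu(\rho)$, because $\rho$ is merely bounded, so $\nabla\mu(\rho)$ does not exist as a function. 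Your claim that the pressure ``again drops'' from this estimate compounds the problem, because the pressure is exactly how this term is tamed.

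The actual resolution, which is the essential extra idea of \cite{desjardins} and of the paper's derivation of \eqref{b.4}, is never to isolate $\int u\cdot\nabla\mu(\rho)\,|d|^2\,dx$: using the symmetry identity $2d_{ik}\partial_j\partial_iu_k=\partial_j(d{:}d)$ one trades it for $-\int(u\cdot\nabla)u\cdot\dv\bigl(2\mu(\rho)d\bigr)\,dx$ plus a harmless term bounded by $\|\nabla u\|_{L^2}\|\nabla u\|_{L^4}^2$, and then substitutes the momentum equation $\dv\bigl(2\mu(\rho)d\bigr)=\rho\partial_tu+\rho u\cdot\nabla u+\nabla\Pi$. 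The pressure thus re-enters through $\sum_{i,k}\int\Pi\,\partial_iu^k\partial_ku^i\,dx$, and this is handled not by H\"older alone but by the compensated-compactness result of \cite{Coifman}: since $\dv\,u=0$, one has $\bigl\|\sum_{i,k}\partial_iu^k\partial_ku^i\bigr\|_{\mathscr{H}^1}\lesssim\|\nabla u\|_{L^2}^2$, which is paired in $\mathscr{H}^1$--$BMO$ duality with $\|(-\Delta)^{-1}\dv(\rho\partial_tu+\rho u\cdot\nabla u)\|_{BMO}\lesssim\|\rho\partial_tu+\rho u\cdot\nabla u\|_{L^2}$, after splitting $\Pi$ as in \eqref{b.4a} and estimating the $\mathcal{R}_i\mathcal{R}_j(2\mu d_{ij})$ part against $\|\nabla u\|_{L^2}\|\nabla u\|_{L^4}^2$. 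Without this Hardy--$BMO$ step your Gronwall loop does not close, since $\nabla\Pi\in L^2$ is not known a priori (only the renormalized combination in the third bullet is); indeed, the third bullet of the theorem and the word ``renormalized'' in \eqref{upi} are precisely the trace of this decomposition of $\Pi$, which your sketch derives only as an afterthought rather than as the engine of the $H^1$ estimate.
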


In what follows, we shall always assume that
$$
0<\underline\mu\leq\mu(\r_0),\quad\mu(\cdot)\in
W^{2,\infty}(\R^+)\quad \mbox{and}\quad \mu(1)=1.
$$

\no{\bf Notations:} In the rest of this paper, we always denote
$a_+$ to be any number strictly bigger than $a$ and $a_-$  any
number strictly less then $a.$ We shall denote $[Y]$ the integer
part of $Y,$ and $\bar{C}$ to be a uniform constant depending only
$m,M$ in \eqref{thma.1} below and $\|\mu'\|_{L^\infty},$ which may
change from line to line.

Our first purpose in this paper is to prove the following global
well-posedness result for \eqref{1.1}.

\begin{thm}\label{thm1.1}
{\sl Let  $m, M$ be two positive constants and $\d\in (0,\f12),$
$2<r<\f2{1-2\d}.$ Let $u_0\in H^{-2\d}\cap H^1(\R^2)$ be a
solenoidal vector filed, and $\rho_0-1\in L^2\cap L^\infty\cap
\dot{W}^{1,r}(\R^2)$ satisfy \beq \label{thma.1} m\leq \r_0\leq
M,\quad \|\mu(\r_0)-1\|_{L^\infty}\leq \e_0, \eeq and for some $q\in
(1/\d,p^\ast],$ \beq \label{p.2qwt} C_0\eqdefa \|u_0\|_{H^{-2\d}}^2+
\|\r_0-1\|_{L^2}^4+\| u_0\|_{L^2}^4+\|\na
u_0\|_{L^2}^2\exp\bigl(C\|u_0\|_{L^2}^4\bigr). \eeq  there holds
 \beq \label{thm1.1av}
 \|\mu(\r_0)-1\|_{L^\infty}\Bigl(\f1\d+4\bar{C}^2C_0\bigl(1+\|\r_0\|_{B^{(2/q)_+}_{\infty,\infty}}\bigr)\exp\bigl(\bar{C}C_0\bigr)\Bigr) \leq \e_0  \eeq
for some sufficiently small $\e_0$. Then  \eqref{1.1} has a unique
global solution $(\rho,u,\na\Pi)$  with $\r-1 \in C_b([0,\infty);
L^2\cap L^\infty\cap\dot W^{1,r}(\R^2)),$ $u\in
\cC_b([0,\infty);H^1(\R^2))\cap L^1(\R^+; H^2(\R^2)),$ $\p_tu,
\na\Pi\in L^1(\R^+;L^2(\R^2)),$ and \beq \label{thm1.1pql}
\begin{split} \|\na u\|_{L^1(\dot{B}^0_{\infty,1})}\leq
2\bar{C}C_0\bigl(1+\|\r_0\|_{B^{(2/q)_+}_{\infty,\infty}}\bigr)\exp\bigl(\bar{C}C_0\bigr).
\end{split}
\eeq If in addition, $\mu(\cdot)\in W^{2+[s],\infty}(\R^+),$
$\r_0-1\in H^{1+s}(\R^2)$ and $u_0\in H^s(\R^2)$ for some $s>1,$
then  the global solution $\r-1\in \cC([0,\infty);H^{1+s}(\R^2)),$
$u\in \cC([0,\infty);H^s(\R^2))\cap
\wt{L}^1_{loc}(\R^+;\dot{H}^{2+s}(\R^2)).$
 }
\end{thm}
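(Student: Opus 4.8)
The plan is to build the solution by a smoothing-and-compactness scheme governed by a single continuation argument, all of which rests on the global-in-time Lipschitz bound \eqref{thm1.1pql} for the velocity. Concretely, I would regularize the data (mollify $\r_0,u_0$ while preserving $m\le\r_0^n\le M$ and the norms entering $C_0$), solve the approximate systems on their maximal intervals using the theory recalled in Theorem \ref{Desj}, establish on $[0,T]$ the uniform a priori estimates below for every $T$, and pass to the limit; uniqueness is settled afterwards from the Lipschitz control of $u$. The backbone is a bootstrap: on the maximal interval where \eqref{thm1.1pql} holds with the constant $2\bar C C_0(1+\norm{\r_0}{B^{(2/q)_+}_{\infty,\infty}})\exp(\bar C C_0)$, the estimates below improve it to half that value provided $\e_0$ obeys \eqref{thm1.1av}, forcing the maximal time to be infinite.

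The a priori estimates proceed in layers. (i) \emph{Energy:} testing the momentum equation against $u$ and using $\pa_t\r+\dv(\r u)=0$ gives $\f12\f{d}{dt}\norm{\sqrt\r\,u}{L^2}^2+\int 2\mu(\r)|d|^2\,dx=0$, so that, since $\mu(\r)\ge\underline\mu$ and $m\le\r\le M$, $\norm{u}{L^\infty(L^2)}^2+\norm{\na u}{L^2(L^2)}^2\lesssim\norm{u_0}{L^2}^2$. (ii) \emph{Negative regularity:} propagating a $\dH^{-2\d}$ bound from $u_0\in H^{-2\d}$ yields the time-decay that later upgrades the natural $L^2$-in-time controls to the $L^1$-in-time integrability demanded by the endpoint Besov estimate. (iii) \emph{$H^1$ bound à la Desjardins:} testing against $\pa_tu$ and using $\pa_t\mu(\r)=-\mu'(\r)u\cdot\na\r$ produces
\[
\f{d}{dt}\int\mu(\r)|d|^2\,dx+\int\r|\pa_tu|^2\,dx=\int\pa_t\mu(\r)|d|^2\,dx-\int\r(u\cdot\na u)\cdot\pa_tu\,dx,
\]
whose right-hand side is handled in $2$-D by Ladyzhenskaya/Gagliardo--Nirenberg inequalities and the $\dot W^{1,r}$ bound on $\r$, the convective term being absorbed into $\f12\int\r|\pa_tu|^2$; this yields $u\in L^\infty(H^1)$ and $\sqrt\r\,\pa_tu\in L^2(L^2)$, hence via the stationary Stokes estimate and the decay of (ii), $u\in L^1(\R^+;H^2)$.

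The crux is the (iv) \emph{endpoint Stokes estimate.} Writing $\dv(2\mu(\r)d)=\mu(\r)\Delta u+2\na\mu(\r)\cdot d$ for divergence-free $u$, the momentum equation becomes
\[
\r\pa_tu-\Delta u+\na\Pi=\bigl(\mu(\r)-1\bigr)\Delta u+2\na\mu(\r)\cdot d-\r\,u\cdot\na u=:F.
\]
Applying the endpoint maximal regularity for the nonstationary Stokes system with the bounded coefficient $\r$ bounds $\norm{\na u}{L^1(\dB^0_{\infty,1})}$ by the data plus the $L^1(\dB^0_{\infty,1})$-type norms of the three pieces of $F$. The first carries the factor $\norm{\mu(\r)-1}{L^\infty}$ and is absorbed on the left by \eqref{thm1.1av}; the other two are controlled by the bounds of (ii)--(iii), the factor $\norm{\r_0}{B^{(2/q)_+}_{\infty,\infty}}$ arising when $\na\mu(\r)=\mu'(\r)\na\r$ is measured in the relevant Besov norm. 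Once $\na u\in L^1(L^\infty)$, the density is transported with a well-defined flow, $\norm{\na\r(t)}{L^r}\le\norm{\na\r_0}{L^r}\exp\bigl(C\int_0^t\norm{\na u}{L^\infty}\,ds\bigr)$, the $L^2\cap L^\infty$ control of $\r-1$ follows directly, and $\pa_tu,\na\Pi\in L^1(L^2)$ from the equation, closing every claimed space. Uniqueness is obtained by estimating the difference of two solutions one derivative below the solution class (velocity in $L^\infty(L^2)\cap L^2(\dH^1)$, density in a negative-index space), the commutators tamed by $u\in L^1(\mbox{Lip})$; a Lagrangian reformulation in the spirit of Danchin--Mucha is an equivalent route.

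For propagation of regularity, given $u_0\in H^s$ and $\r_0-1\in H^{1+s}$ with $s>1$, I would run $H^s$ energy estimates for $u$ and $H^{1+s}$ transport estimates for $\r$, all commutators being controlled by the established $L^1(\mbox{Lip})$ bound on $u$, closing by Gronwall. \textbf{The main obstacle} is step (iv): $\dB^0_{\infty,1}$ is a borderline space where the heat/Stokes smoothing barely closes, so the convective term $\r\,u\cdot\na u$ must be placed in $L^1$ in time --- precisely where the $\dH^{-2\d}$-driven decay is indispensable --- while the variable density in front of $\pa_tu$ blocks any direct appeal to constant-coefficient maximal regularity and forces $(\mu(\r)-1)\Delta u$ to be treated perturbatively and absorbed via \eqref{thm1.1av}; balancing this absorption against the exponential growth in $C_0$ is exactly what pins down the form of \eqref{thm1.1av}.
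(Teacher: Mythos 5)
Your overall architecture---basic energy estimate, time decay driven by $u_0\in H^{-2\d}\cap L^2$ via frequency splitting, a Desjardins-type $H^1$ estimate, a global Lipschitz bound closed by a smallness bootstrap, mollification plus compactness, the blow-up criterion, and an $L^2$-energy uniqueness argument---is the same as the paper's. But your step (iv), which is the heart of the proof, has a genuine gap, and it is twofold. First, there is no ``endpoint maximal regularity for the nonstationary Stokes system with the bounded coefficient $\r$'' in $L^1_t(\dot B^0_{\infty,1})$ to appeal to; you yourself note that the variable density in front of $\p_t u$ blocks constant-coefficient maximal regularity, and yet the step as written invokes exactly such a theorem. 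The paper never uses such a black box: it divides the momentum equation by $\r$, applies $\mathbb{P}$ and $\dot\Delta_j$, and runs an $L^q$ energy method on each dyadic block for $q\in(1/\d,p^\ast]$, treating $\dv\bigl(2(\mu(\r)-1)\mathbb{P}d(\dot\Delta_ju)\bigr)$ perturbatively---smallness enters \emph{blockwise}, where it is legitimate---and estimating the commutators $[\dot\Delta_j\mathbb{P};u\cdot\na]u$, $[\dot\Delta_j\mathbb{P};\f1\r](\dv(2\mu(\r)d)-\na\Pi)$, $[\dot\Delta_j\mathbb{P};\mu(\r)]d$ in $L^1$ in time (this is exactly where the decay estimates with $q\d>1$ are consumed), yielding first the intermediate bound $\|u\|_{\wt L^1_t(\dot B^{1+2/q+\e}_{q,\infty})}$ of Lemma \ref{lema.1.1}. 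Second, even granting a linear Stokes estimate, your absorption of $(\mu(\r)-1)\Delta u$ ``by \eqref{thm1.1av}'' fails in the endpoint space: bounded functions are not pointwise multipliers on $\dot B^0_{\infty,1}$, so one cannot bound this term by $\|\mu(\r_0)-1\|_{L^\infty}\|u\|_{L^1_t(\dot B^1_{\infty,1})}$ and absorb it.

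The paper's resolution of this second point is precisely what it flags as its most important ingredient (Remark \ref{rmk1.1}): Riesz transforms are bounded on homogeneous Besov spaces with norm \emph{uniform} in the indices (unlike on $L^p$, where the norm grows like $p$---the very reason Desjardins was stuck at $p\le p^\ast$), combined with a Bony decomposition of $(\mu(\r)-1)d$ and a frequency truncation at $N\sim\e^{-1}\|\na u\|_{L^1_t(L^\infty)}$. Only the paraproduct $T_{\mu(\r)-1}d$ and the middle frequencies of the remainders carry the factor $\|\mu(\r_0)-1\|_{L^\infty}$, at the price of a factor $N$; the high frequencies are paid for by the $\wt L^1_t(\dot B^{1+2/q+\e}_{q,\infty})$ bound, and the exponential growth of $\|\r\|_{B^{2/q+\e}_{\infty,\infty}}$ under transport is beaten by $2^{-N\e}$. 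This produces the \emph{quadratic} inequality $X\le CC_1^2\bigl(1+\|\r_0\|_{B^{2/q+\e}_{\infty,\infty}}\bigr)+C\|\mu(\r_0)-1\|_{L^\infty}X^2$ for $X=\|u\|_{L^1_t(\dot B^1_{\infty,1})}$, whose closure under \eqref{a.7} is exactly what dictates the form of \eqref{thm1.1av} and \eqref{thm1.1pql}; your linear-absorption picture cannot produce either. A secondary flaw: in your step (iii) you propose to control $\int\pa_t\mu(\r)|d|^2\,dx$ using the $\dot W^{1,r}$ bound on $\r$, but $\|\na\r\|_{L^r}$ is itself only controlled through $\exp\bigl(C\|\na u\|_{L^1_t(L^\infty)}\bigr)$, which is circular at this stage and would contaminate $C_0$ with $\|\na\r_0\|_{L^r}$; the paper's Proposition \ref{propb.1} avoids $\na\r$ entirely, substituting $\dv(2\mu(\r)d)=\r\p_tu+\r u\cdot\na u+\na\Pi$ from the equation, handling $\int\Pi\,\pa_iu^k\pa_ku^i\,dx$ by the Coifman--Lions--Meyer--Semmes $\mathscr{H}^1$--BMO duality, and controlling $\|\na u\|_{L^4}$ via the elliptic estimate \eqref{b.7} valid only for $p\le p^\ast$.
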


\begin{rmk} Without the assumptions that $\r_0-1\in L^2(\R^2)$ and
$u_0\in H^{-2\d}(\R^2)$ in Theorem \ref{thm1.1}, our proof of
Theorem \ref{thm1.1} ensures that \eqref{1.1} has a unique solution
$(\r,u)$ on a time interval $[0,\frak{T}]$ with $\frak{T}$ being
determined by \beno \frak{T}\geq
\bigl(C(m,M,\|\na\r_0\|_{L^r},\|u_0\|_{H^1})\|\mu(\r_0)-1\|_{L^\infty}\bigr)^{-1}
\eeno and $\r \in L^\infty((0,\frak{T}); L^\infty\cap\dot
W^{1,r}(\R^2)),$ $u\in L^\infty((0,\frak{T}); H^1(\R^2))\cap
L^2((0,\frak{T}); H^2(\R^2)).$
\end{rmk}

\begin{rmk}\label{rmk1.1}
We should point out that the reason why Desjardin \cite{desjardins}
only proved \eqref{upi} for $p\in [2,p^\ast]$ with $p^\ast$ being
determined by \eqref{pstar} is because of the fact that the Riesz
transform $\cR$ maps continuously from $L^p(\R^d)$ to $L^p(\R^d)$
with the operator norm (see Theorem 3.1.1. of \cite{Ch1} for
instance): \beno \|\cR\|_{\cL(L^p\to L^p)}\leq C_0 p \eeno for some
uniform constant $C_0.$ Our main observation used in the proof of
Theorem \ref{thm1.1} is that: Riesz transform $\cR$ maps
continuously from homogeneous Besov spaces $\dot B^s_{p,r}(\R^d)$
(see Definition \ref{def1.1}) to $\dot B^s_{p,r}(\R^d)$ with the
operator norm \beno \|\cR\|_{\cL(\dot B^s_{p,r}\to \dot
B^s_{p,r})}\leq C_0, \eeno which enables us to prove the {\it a
priori} estimate for $\|\na u\|_{L^1_{T}(L^\infty)}.$  This is in
fact the most important ingredient used in the proof of Theorem
\ref{thm1.1}.

The other important ingredient used in the proof of
\eqref{thm1.1pql} is the time decay estimates \eqref{p.2dfg} and
\eqref{p.2a}, which is a slight generalization of the decay
estimates obtained by Huang and Paicu in \cite{HP1}. The proof of
such decay estimates is a direct application of Schonbek's frequency
splitting method as well as the strategy of Wiegner \cite{Wie} to
prove the time decay estimate for classical 2-D Navier-Stokes
system.
\end{rmk}

In the particular case when  $\mu(\r)$ is a positive constant, the
proof of Theorem \ref{thm1.1} yields the following corollary, which
does not require any low frequency assumption on $u_0.$

\begin{col}\label{cst}
{\sl  Let $\alpha\in (0,1)$ and $m, M$ be positive constants. Let
$u_0\in \dot B^0_{2,1}(\R^2)$ be a  solenoidal vector filed and
$\rho_0-1\in \dot B^1_{2,1}\cap\dot
B^{\alpha}_{\infty,\infty}(\R^2)$ with $ m\leq \r_0 \leq M.$ Then
\eqref{1.1} with $\mu(\r)=1$ has a unique global solution $(\rho,u)$
so that $\r-1 \in \cC([0,\infty);\dot B^1_{2,1}\cap \dot
B^{\alpha}_{\infty,\infty}(\R^2)),$ $u\in \cC([0,\infty);\dot
B^0_{2,1}(\R^2))\cap L^1_{loc}(\R^+; \dot B^2_{2,1}(\R^2)).$ }
\end{col}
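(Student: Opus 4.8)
The plan is to re-run the a priori estimate scheme behind Theorem \ref{thm1.1} in the scaling-invariant framework corresponding to $p=2$. The point of the hypothesis $\mu(\r)\equiv1$ is that it removes the entire source of the smallness requirement in Theorem \ref{thm1.1}: the factor $\|\mu(\r_0)-1\|_{L^\infty}$ that multiplies every term in \eqref{thm1.1av} is now zero, so the corresponding a priori estimates will close unconditionally, with no smallness on the density fluctuation. The spaces $\pbes{0}{2}{1}$ for $u$ and $\pbes{1}{2}{1}$ for $\r-1$ are precisely the $p=2$ critical spaces, while the extra control $\r_0-1\in\pbes{\al}{\infty}{\infty}$ supplies the H\"older regularity on the density needed to run the transport estimates. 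Since $\D u=\dive(2d)$ when $\dive u=0$, the momentum equation reduces to a forced heat equation; and because the bounds $m\le\r_0\le M$ are propagated by the continuity equation (the flow of $u$ preserving the density range), I may treat $\r$ as a bounded, strictly positive multiplier throughout.

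First I would record the basic energy identity by testing the momentum equation against $u$: using $\dive u=0$ and $\pa_t\r+\dive(\r u)=0$ this conserves $\|\sqrt{\r}\,u\|_{L^2}^2+2\int_0^t\|\na u\|_{L^2}^2$, giving $u\in L^\infty_t L^2\cap L^2_t\dot H^1$. The crucial step is the global Lipschitz bound $\|\na u\|_{L^1_T(\pbes{0}{\infty}{1})}$. Here I would write the equation in the form $\pa_t u-\D u=-u\cdot\na u+(\f1\r-1)\D u-\f1\r\na\Pi$ and recover the pressure from the variable-coefficient elliptic equation $\dive(\f1\r\na\Pi)=\dive(\f1\r\D u-u\cdot\na u)$, splitting $\f1\r=1+(\f1\r-1)$ so that the leading term is inverted by the Laplacian. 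The key observation recorded in Remark \ref{rmk1.1}---that $\cR$ is bounded on $\pbes{s}{p}{q}$ with operator norm independent of the indices---then controls the pressure in Besov norm, and combining parabolic smoothing in $\pbes{s}{2}{1}$ with the two-dimensional logarithmic interpolation and the energy bound closes the estimate by Gronwall. Unlike Theorem \ref{thm1.1}, no low-frequency assumption on $u_0$ is required, because these scaling-invariant estimates close directly, without appealing to the time-decay estimates \eqref{p.2dfg}--\eqref{p.2a}.

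With the Lipschitz norm of $u$ in hand, standard transport estimates propagate the density regularity: I would bound $\|\r-1\|_{L^\infty_T(\pbes{1}{2}{1})}$ and $\|\r-1\|_{L^\infty_T(\pbes{\al}{\infty}{\infty})}$, each by the initial norm times $\exp(C\|\na u\|_{L^1_T(\pbes{0}{\infty}{1})})$, which also yields the time-continuity of $\r-1$ into $\pbes{1}{2}{1}\cap\pbes{\al}{\infty}{\infty}$. Feeding this back, the heat-equation form of the momentum equation together with maximal $L^1_T$-regularity in $\pbes{s}{2}{1}$ gives $u\in\cC([0,\infty);\pbes{0}{2}{1})\cap L^1_{loc}(\R^+;\pbes{2}{2}{1})$, the source terms $u\cdot\na u$, $(\f1\r-1)\D u$ and $\f1\r\na\Pi$ being absorbed using the density regularity and the bounds already established. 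Existence then follows by constructing smooth approximate solutions (for instance by Friedrichs regularization), for which these estimates are uniform, and passing to the limit; uniqueness is obtained from the Lipschitz control of $u$ via a stability estimate for the difference of two solutions in a norm one degree below the regularity of the data. The main obstacle is precisely the global-in-time Lipschitz estimate of the second paragraph: with the density fluctuation not assumed small, one cannot perturb off the constant-density heat flow, and it is only the combination of the index-uniform Riesz bound, the two-dimensional energy conservation, and the $\pbes{\al}{\infty}{\infty}$ control of $\r$ that lets Gronwall close globally.
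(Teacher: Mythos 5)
Your central claim---that with $\mu\equiv1$ the critical ($p=2$) a priori scheme behind Theorem \ref{thm1.1} ``closes unconditionally''---is exactly where the argument breaks. Constant viscosity removes the smallness requirement on $\|\mu(\r_0)-1\|_{L^\infty}$, but not the one on the density fluctuation, and your second paragraph reintroduces the latter: splitting $\f1\r=1+(\f1\r-1)$ and inverting the Laplacian produces the top-order remainders $(\f1\r-1)\D u$ and $(\f1\r-1)\na\Pi$, and in any $L^1_t(\dB^0_{2,1})$ maximal-regularity or $L^1_t(\dB^0_{\infty,1})$ estimate these contribute a multiplier norm of $\f1\r-1$ (an $O(1)$ quantity, controlled only through $m\le\r_0\le M$) times the highest-order norms $\|u\|_{L^1_t(\dB^{2}_{2,1})}$ and $\|\na\Pi\|_{L^1_t(\dB^0_{2,1})}$. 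A coefficient of size one on the top-order term can be absorbed neither by Gronwall (which handles lower-order terms under a time integral) nor by the index-uniform Riesz bound; this obstruction is precisely why Theorem \ref{thm1.2} must assume $\|\r_0-1\|_{\dB^{2/p}_{p,1}}$ exponentially small even though the viscosity there causes no extra trouble. Your final sentence concedes that one cannot perturb off the constant-density flow, but the alternative mechanism you invoke (Riesz bound, energy conservation, $\dB^{\al}_{\infty,\infty}$ control, Gronwall) is never brought to bear on these specific terms, so the key closing step is missing and, as formulated, the scheme does perturb.

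The paper's proof avoids the critical level entirely for the continuation argument, and this is the idea your proposal lacks. The local critical theory is used only to produce the solution on $[0,T^\ast)$ and to pick $t_0$ with $u(t_0)\in H^1$ (possible since $u\in L^1_t(\dB^{2}_{2,1})$). From $t_0$ on, one runs the subcritical $H^1$ energy estimate, testing against $\p_tu$ as in \eqref{p.1}, together with the point where $\mu\equiv1$ is genuinely decisive: the constant-coefficient Stokes estimate \eqref{p.2adf}, in which the variable density enters only through $m\le\r\le M$ in the inertia term $\r\p_tu+\r u\cdot\na u$; the choice $\e=\f{m}{8M^2}$ then closes \eqref{p.3} globally with no smallness whatsoever. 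The $\dB^{\al}_{\infty,\infty}$ regularity of $\r$ enters only afterwards, in the commutator bound \eqref{p.4}--\eqref{p.5} for $\|u\|_{\wt L^1((t_0,t);\dB^{2+\al}_{2,\infty})}$, where $[\dD_j\mathbb{P};\f1\r]$ rides on the already-controlled $\|\D u-\na\Pi\|_{L^2((t_0,t);L^2)}$ rather than on an uncontrolled top-order norm; the resulting factor $\exp\bigl(C\|u\|_{L^1((t_0,t);\dB^2_{2,1})}\bigr)$ from the transport estimate is then defused by the frequency splitting with the logarithmic choice $N=\bigl[\f{C}{\al}\|u\|_{L^1((t_0,t);\dB^{2}_{2,1})}\bigr]$, yielding the linearly growing bound \eqref{p.6} and hence $T^\ast=\infty$. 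Without this detour through subcritical $L^2$ energy/Stokes bounds and the logarithmic splitting, there is no route to the global Lipschitz control you posit, so the proposal has a genuine gap rather than a variant proof.
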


Another important feature of \eqref{1.1} is the scaling invariant
property, namely,  if $(\r,u)$ is a solution of \eqref{1.1}
associated to the initial data $(\r_0, u_0)$, then
\beq\label{scalingt} \bigl(\r_\la(t,x),u_\la(t,x)\bigr)\eqdefa
\bigl(\r(\la^2t,\la x), \la u(\la^2t,\la x)\bigr)\quad
\bigl(\r_{0,\la}(x),u_{0,\la}(x)\bigr)\eqdefa \bigl(\r_0(\la x), \la
u_0(\la x)\bigr),\eeq $(\r_\la(t,x),u_\la(t,x))$ is a also a
solution of \eqref{1.1} associated with the initial data
$(\r_{0,\la}(x),u_{0,\la}(x)).$ A functional space for the data
$(\r_0,u_0)$ or for the solution $(\r,u)$ is said to be at the
scaling of the equation if its norm is invariant under the
transformation \eqref{scalingt}. In the very interesting paper
\cite{DM1}, Danchin and Mucha proved the global well-posedness of
\eqref{1.1} with $\mu(\r)=\mu>0$ in $d$ space dimensions and with
small initial data in the critical spaces
$\r_0-1\in\dot{B}^{\f{d}p}_{p,1}(\R^d)$ and $u_0\in
\dot{B}^{-1+\f{d}p}_{p,1}(\R^d)$ for $p\in [1,2d).$ In fact, they
\cite{DM1} only require $\r_0-1$ to be small in the multiplier space
of $ \dot{B}^{-1+\f{d}p}_{p,1}(\R^d).$ One may check \cite{DM1} and
the references therein for more details in this direction.

It is easy to check that
$\dB^{\frac{2}{p}}_{p,1}(\R^2)\times\bigl(\dB^{-1+\frac{2}{p}}_{p,1}\cap
L^2(\R^2)\bigr)$ is at the scaling of \eqref{1.1}. When $\rho_0-1$
is small enough in the critical space
$\dB^{\frac{2}{p}}_{p,1}(\R^2),$ we have the following global
well-posedness result for \eqref{1.1}, which in particular improves
the smallness condition \eqref{thmassume} in \cite{HPZ1} to
\eqref{th1.2a} below (with only one exponential), and completes the
uniqueness gap for $p\in (2,4)$ in \cite{HPZ1}.

\begin{thm}\label{thm1.2}
{\sl  Let $1< p<4,$ $\rho_0-1\in \dot B^{\frac{2}{p}}_{p,1}(\R^2)$
and $u_0\in \dot B^{-1+\frac{2}{p}}_{p,1}\cap L^2(\R^2)$ which
satisfy $\dv\,u_0=0$ and \beq\label{th1.2a} \|\r_0-1\|_{\dot
B^{\frac{2}{p}}_{p,1}}\exp\bigl\{C_0(\|u_0\|_{\dot
B^{-1+\frac{2}{p}}_{p,1}}+\|u_0\|_{L^2}^2)\bigr\} \le \e_0 \eeq for
some uniform constant $C_0$ and $\e_0$ being sufficiently small.
Then \eqref{1.1} has a unique global solution $(\rho,u,\na\Pi)$  so
that $\r \in \cC_b([0,\infty);\dot B^{\frac{2}{p}}_{p,1}(\R^2) ),$
$u\in \cC_b([0,\infty);\dot B^{-1+\frac{2}{p}}_{p,1}\cap
L^2(\R^2))\cap L^1(\R^+; \dot B^{1+\frac{2}{p}}_{p,1}(\R^2)),$ and
$\p_tu, \na\Pi\in L^1(\R^+; \dot B^{-1+\frac{2}{p}}_{p,1}(\R^2)).$ }
\end{thm}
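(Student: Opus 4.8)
The plan is to establish uniform \emph{a priori} estimates in the scaling invariant norms for a smooth solution, then recover existence by approximation and compactness, and finally treat uniqueness separately. Write $a\eqdefa\r-1$ and use $\mu(1)=1$ with $\dv u=0$ to recast the momentum equation of \eqref{1.1} as a constant coefficient Stokes system perturbed by the density,
\[
\r\,\pa_t u+\r\, u\cdot\na u-\Delta u+\na\Pi=\dv\bigl(2(\mu(\r)-1)d\bigr),\qquad \dv u=0 .
\]
On the right every term is either quadratic in $u$ or carries the factor $\mu(\r)-1$, whose $\dB^{\f2p}_{p,1}$ norm is $\lesssim\|a\|_{\dB^{\f2p}_{p,1}}$ because $\mu\in W^{2,\infty}$ and $\dB^{\f2p}_{p,1}\hookrightarrow L^\infty$ in dimension two. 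The three quantities to control together are the basic energy, the density fluctuation $\|a\|_{\tL^\infty_T(\dB^{\f2p}_{p,1})}$, and the critical velocity norm $\|u\|_{\tL^\infty_T(\dB^{-1+\f2p}_{p,1})}+\|u\|_{L^1_T(\dB^{1+\f2p}_{p,1})}$.

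For the energy I would test the momentum equation against $u$ and use the mass equation to get $\f12\f{d}{dt}\|\sqrt{\r}\,u\|_{L^2}^2+\int 2\mu(\r)|d|^2\,dx=0$; since $m\le\r\le M$ is propagated by the transport of $\r$ and $\mu\ge\underline\mu>0$, Korn's identity yields a \emph{global} bound on $\|u\|_{L^\infty(\R^+;L^2)}$ and $\|\na u\|_{L^2(\R^+;L^2)}$ by $\|u_0\|_{L^2}$. Since $a$ solves $\pa_t a+u\cdot\na a=0$, the borderline Besov transport estimate gives $\|a\|_{\tL^\infty_T(\dB^{\f2p}_{p,1})}\le\|\r_0-1\|_{\dB^{\f2p}_{p,1}}\exp\bigl(C\|u\|_{L^1_T(\dB^{1+\f2p}_{p,1})}\bigr)$, so the density stays as small as its data once the parabolic velocity norm is controlled. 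For the velocity I would use a maximal regularity estimate for the Stokes operator with the close-to-constant density $\r$ following \cite{DM1}, and recover $\na\Pi$ from the elliptic equation produced by applying $\dv$ to the momentum equation and inverting $\dv(\f1\r\na\,\cdot\,)$ as a small perturbation of $\Delta$ via the smallness of $a$; this bounds the critical velocity norm and $\|\na\Pi\|_{L^1_T(\dB^{-1+\f2p}_{p,1})}$ by $\|u_0\|_{\dB^{-1+\f2p}_{p,1}}$ plus the $L^1_T(\dB^{-1+\f2p}_{p,1})$ norm of the source above.

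The crux, and the point that upgrades the double exponential \eqref{thmassume} of \cite{HPZ1} to the single exponential \eqref{th1.2a}, is to estimate the convection $\r\,u\cdot\na u$ so that the whole scheme closes even though $u_0$ is allowed to be large in $\dB^{-1+\f2p}_{p,1}$. The viscosity remainder and the terms carrying the factor $a$ are absorbed using the smallness of $\|a\|_{\tL^\infty_T(\dB^{\f2p}_{p,1})}$. For the convection I would exploit the genuinely two-dimensional, globally bounded energy---the extra hypothesis $u_0\in L^2$ that is absent from the purely critical analysis of \cite{HPZ1}---and aim to bound $\|u\cdot\na u\|_{L^1_T(\dB^{-1+\f2p}_{p,1})}$ by $C\int_0^T\|\na u\|_{L^2}^2\,dt$ plus an absorbable multiple of $\|u\|_{L^1_T(\dB^{1+\f2p}_{p,1})}$, so that the parabolic velocity norm comes out \emph{affine} in $\|u_0\|_{\dB^{-1+\f2p}_{p,1}}+\|u_0\|_{L^2}^2$ rather than exponential in it. Since $\int_0^\infty\|\na u\|_{L^2}^2\lesssim\|u_0\|_{L^2}^2$, feeding this affine bound into the transport estimate reproduces exactly the one-exponential condition \eqref{th1.2a}. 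A continuity argument in $T$ then closes all three estimates simultaneously under \eqref{th1.2a}, uniformly in $T$; I expect securing this bilinear estimate---which must decouple the large critical data from the energy and avoid any Gr\"onwall factor multiplying $\|u\|_{\dB^{-1+\f2p}_{p,1}}$---to be the main obstacle.

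Existence with the stated regularity then follows by smoothing the data, solving the regularized systems, propagating the uniform bounds, and passing to the limit by compactness---transport stability for $\r$, parabolic smoothing for $u$, and the elliptic estimate for $\na\Pi$. The last step, and for $p\in(2,4)$ the subtle one left open in \cite{HPZ1,HP1}, is uniqueness: in the critical setting the difference of two densities is transported with source $\d u\cdot\na\r_1$, which loses one derivative and cannot be afforded in $\dB^{-1+\f2p}_{p,1}$. I would resolve this by passing to Lagrangian coordinates following Danchin--Mucha \cite{DM1}, where the mass equation becomes trivial and the entire difference estimate is carried out for the velocity and the flow map alone; the hypothesis $p<4$ is exactly what keeps the index $-1+\f2p>-\f12$ in the admissible range for that argument.
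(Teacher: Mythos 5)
Your overall architecture coincides with the paper's proof: rewriting the system as the constant-coefficient parabolic equation \eqref{q.2} with every density-carrying term on the right, the global 2-D energy bound, the single-exponential transport estimate for $\|\r-1\|_{\wt L^\infty_t(\dB^{\f2p}_{p,1})}$, absorption of the terms carrying $1-\r$ by the smallness \eqref{th1.2a}, and uniqueness in Lagrangian coordinates following \cite{DM1} (where, as you note, $p<4$ is exactly what keeps $-1+\f2p>-\f12$). But the step you yourself flag as the main obstacle is a genuine gap, and the estimate you propose to fill it is not attainable as stated. For the plain product, the paraproduct piece $T_u\na u$ in $\dB^{-1+\f2p}_{p,1}$ unavoidably produces a factor controlled only through $L^2(\R^2)\hookrightarrow \dB^{-1}_{\infty,\infty}$ in the low-frequency slot, so the best energy-compatible bound is $\|u\cdot\na u\|_{L^1_t(\dB^{-1+\f2p}_{p,1})}\lesssim \|u_0\|_{L^2}\|u\|_{L^1_t(\dB^{1+\f2p}_{p,1})}$: the constant $\|u_0\|_{L^2}$ is large, hence not absorbable, and trading it by interpolation and Gr\"onwall reinstates an exponential of the critical norm, i.e.\ precisely the double exponential of \eqref{thmassume} from \cite{HPZ1} that you are trying to avoid. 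A bound of the form $C\|\na u\|_{L^2}^2+\eta\|u\|_{\dB^{1+\f2p}_{p,1}}$ with absorbable $\eta$ does not hold for the full convection term, so your scheme does not close as written.

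The paper's resolution is to never estimate $u\cdot\na u$ itself. After applying $\dot\Delta_j\mathbb{P}$ one splits $\dot\Delta_j\mathbb{P}(u\cdot\na u)=u\cdot\na\dot\Delta_ju+[\dot\Delta_j\mathbb{P};u\cdot\na]u$; the pure transport part contributes nothing to the $L^p$ energy estimate since $\dv\,u=0$, while in the commutator the derivative is shifted onto the low-frequency factor ($\dot S_{\ell-1}\na u$ in place of $\dot S_{\ell-1}u$), which is exactly what Lemma \ref{lemq.9}, applied with $q=2$ and $s=1$, exploits to give $\|[\dot\Delta_j\mathbb{P};u\cdot\na]u\|_{L^p}\lesssim d_j2^{j(1-\f2p)}\|\na u\|_{L^2}^2$; integrating in time, \eqref{q.3} bounds this whole contribution by $C\|u_0\|_{L^2}^2$ --- affine in the data, which is the decoupling you were seeking. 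The convection then reappears only multiplied by $1-\r$, where Lemma \ref{lemq.10} yields $\|u_0\|_{L^2}$ times the critical norms of $u$, and this \emph{is} absorbable thanks to the smallness of $\|\r-1\|_{\wt L^\infty_t(\dB^{\f2p}_{p,1})}$ guaranteed by \eqref{th1.2a}; the functional \eqref{q.6a} then closes by a bootstrap on top of the local theory of \cite{abidi}. A minor further remark: you do not need to invert $\dv\bigl(\f1\r\na\,\cdot\,\bigr)$ for the pressure --- since the density terms sit on the right-hand side of \eqref{q.2}, the bounds for $\pa_tu$ and $\na\Pi$ in $L^1_t(\dB^{-1+\f2p}_{p,1})$ are read off directly from the constant-coefficient system, as in \eqref{q.6}.
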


Finally, in the case when the initial data is in the almost scaling
invariant spaces  and $\|\r_0-1\|_{L^\infty}$ is sufficiently small,
we have the following global well-posedness result for \eqref{1.1}:

\begin{thm}\label{thm1.3}
{\sl  Let $1< p<4$ and $0<\e<\f4p-1.$ Let $\rho_0-1\in \dot
B^{\frac{2}{p}}_{p,1}\cap \dot B^{\frac{2}{p}+\e}_{p,1}(\R^2)$ and
$u_0\in \dot B^{-1+\frac{2}{p}}_{p,1}\cap \dot
B^{-1+\frac{2}{p}-\e}_{p,1}\cap L^2(\R^2)$ be a solenoidal vector
filed. Then \eqref{1.1} has a unique global solution $(\rho,u,
\na\Pi)$ so that $\r-1 \in \cC_b([0,\infty);\dot
B^{\frac{2}{p}}_{p,1}\cap\dot B^{\frac{2}{p}+\e}_{p,1}(\R^2) ),$
$u\in \cC_b([0,\infty);\dot B^{-1+\frac{2}{p}-\e}_{p,1}\cap\dot
B^{-1+\frac{2}{p}}_{p,1}\cap L^2(\R^2))\cap L^1(\R^+; \dot
B^{1+\frac{2}{p}}_{p,1}\cap \dot B^{1+\frac{2}{p}-\e}_{p,1}(\R^2)),$
and $\p_tu, \na\Pi\in  L^1(\R^+; \dot B^{-1+\frac{2}{p}}_{p,1}\cap
\dot B^{-1+\frac{2}{p}-\e}_{p,1}(\R^2))$ provided that
\beq\label{thm1.3a} \|\r_0-1\|_{L^\infty}\leq\e_0\eeq for some small
enough $\e_0.$ }
\end{thm}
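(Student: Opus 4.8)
The plan is to treat \eqref{1.1} as a perturbation of the constant-coefficient Stokes system and to close all estimates in the two-scale norm dictated by the hypotheses, using the propagated $L^\infty$ smallness of the density to absorb the variable-coefficient terms. Setting $a\eqdef\rho-1$, rewrite the system as the transport equation $\pa_t a+u\cdot\nabla a=0$ together with
\[ \pa_t u-\Delta u+\nabla\Pi=F(a,u,\nabla\Pi),\qquad \dv\,u=0, \]
where $F$ collects the convection term $-u\cdot\nabla u$, the pressure correction $(1-\tfrac1\rho)\nabla\Pi$, and the viscosity correction $\tfrac1\rho\dv(2\mu(\rho)d)-\Delta u$. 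Since $u$ is divergence free, the transport equation preserves all Lebesgue norms of $a$ exactly, so $\|\rho(t)-1\|_{L^\infty}=\|\rho_0-1\|_{L^\infty}\le\e_0$ for every $t$; this is the key fact that lets the smallness hypothesis \eqref{thm1.3a} act at all times. The pressure is recovered from $\dv(\tfrac1\rho\nabla\Pi)=\dv(F_0)$, which I write as $\Delta\Pi=\dv((1-\tfrac1\rho)\nabla\Pi)+\dv(F_0)$ and solve by a Neumann-series argument whose convergence rests on $\|1-\tfrac1\rho\|_{L^\infty}\le C\e_0$ being small, combined with the uniform $\dot B^s_{p,1}$ boundedness of the Riesz transform recorded in Remark \ref{rmk1.1}.

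The a priori estimates come in four parts, carried out simultaneously at the critical scale $\dot B^{-1+2/p}_{p,1}$ (resp. $\dot B^{2/p}_{p,1}$ for $a$) and at the shifted scale with $-\e$ (resp. $+\e$): (i) the parabolic maximal-regularity estimate $\|u\|_{\tilde L^\infty_t(\dot B^\sigma_{p,1})}+\|u\|_{L^1_t(\dot B^{\sigma+2}_{p,1})}\lesssim\|u_0\|_{\dot B^\sigma_{p,1}}+\|F\|_{L^1_t(\dot B^\sigma_{p,1})}$ for $\sigma\in\{-1+2/p,\,-1+2/p-\e\}$; (ii) the transport estimate for $a$ in $\dot B^{2/p}_{p,1}$ and $\dot B^{2/p+\e}_{p,1}$, whose growth is controlled by $\int_0^t\|\nabla u\|_{\dot B^{2/p}_{p,1}}\,d\tau=\|u\|_{L^1_t(\dot B^{1+2/p}_{p,1})}$; (iii) the elliptic pressure estimate in both scales; and (iv) the global $L^2$ energy balance, which together with the Schonbek--Wiegner time-decay scheme alluded to in Remark \ref{rmk1.1} furnishes the time integrability needed to make the global quantities finite. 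For the forcing I would estimate each piece of $F$ by Bony's decomposition, arranging that the dangerous paraproduct $T_{(1/\rho-1)}\nabla\Pi$ (and its viscosity analogue) is bounded by $\|1/\rho-1\|_{L^\infty}\|\nabla\Pi\|_{\dot B^\sigma_{p,1}}$ and hence carries the small factor $\e_0$, while the remaining paraproduct and remainder pieces—where the non-small critical density norm $\|a\|_{\dot B^{2/p}_{p,1}}$ appears—are absorbed by the smoothing norm $\|u\|_{L^1_t(\dot B^{1+2/p}_{p,1})}$ and by the extra $\e$ of regularity via interpolation (e.g.\ $\|u\|_{\dot B^{2/p}_{p,1}}\lesssim\|u\|_{\dot B^{-1+2/p}_{p,1}}^{1/2}\|u\|_{\dot B^{1+2/p}_{p,1}}^{1/2}$, integrated in time).

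With these estimates in hand I would let $X(t)$ be the sum of all the norms in the statement and derive a closed inequality of the schematic form $X(t)\le C\,X(0)+C\e_0\,X(t)+C\,X(t)^2+C\,(\text{decaying time factor})$; a standard continuity argument then yields a uniform global bound once $\e_0$ is small. Existence follows by running this scheme on smooth approximate solutions (a Friedrichs or mollified iteration), passing to the limit by the uniform bounds and compactness, with the regularity claims read off from the bounds. Uniqueness I would prove by estimating the difference of two solutions one derivative below the solution space (e.g.\ in $\tilde L^\infty_t(\dot B^{-1+2/p}_{p,1})$, or via a Lagrangian change of variables), which is licit because the construction delivers $\nabla u\in L^1(\R^+;L^\infty)$.

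The step I expect to be the main obstacle is closing the nonlinear estimate for $F$ \emph{globally in time} while the critical density norm $\|a\|_{\dot B^{2/p}_{p,1}}$ is only large-but-finite, not small: the sole small parameter is $\e_0=\|\rho_0-1\|_{L^\infty}$, which tames only the paraproduct pieces, so the surviving terms must be shown to be time-integrable and absorbable using the $L^2$ energy decay together with the extra $\e$-regularity. Managing the interplay between the two Besov scales in this absorption—and simultaneously keeping the pressure estimate uniform in time via the Besov-boundedness of $\cR$—is where the real work lies; the uniqueness argument at this low regularity is the secondary difficulty.
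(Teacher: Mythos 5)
Your overall architecture (perturbed Stokes system, two-scale norms at $-1+\f2p$ and $-1+\f2p-\e$, transport of $a=\r-1$, conservation of $\|\r-1\|_{L^\infty}$, continuity argument, Lagrangian uniqueness) matches the paper's, and you correctly flag the main obstacle. But your proposed mechanism for overcoming that obstacle does not close, and the genuinely missing idea is the pair of interpolation product laws, Lemma \ref{lemh.0} and Lemma \ref{lemh.1}. You argue that only the paraproduct $T_{(1/\r-1)}\na\Pi$ carries the small factor $\e_0$, while the pieces carrying the non-small critical norm $\|a\|_{\dB^{2/p}_{p,1}}$ are ``absorbed by the smoothing norm $\|u\|_{L^1_t(\dB^{1+2/p}_{p,1})}$ and the extra $\e$ of regularity via interpolation'' on $u$. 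This fails: by the standard product laws, terms such as $T_{\pa_tu}a+\cR(a,\pa_tu)$ contribute $\|a\|_{\dB^{2/p}_{p,1}}\|\pa_tu\|_{L^1_t(\dB^{-1+2/p}_{p,1})}$ to the right-hand side, where $\|a\|_{\dB^{2/p}_{p,1}}$ is an order-one (large) data-sized coefficient and $\|\pa_tu\|_{L^1_t(\dB^{-1+2/p}_{p,1})}$ is one of the very norms in your $X(t)$. The resulting inequality has the form $X\le CX(0)+C\bigl(\e_0+\|a_0\|_{\dB^{2/p}_{p,1}}e^{CX}\bigr)X$, which no continuity argument closes unless $\|a_0\|_{\dB^{2/p}_{p,1}}$ is small — i.e.\ you are back to the hypotheses of Theorem \ref{thm1.2}, not Theorem \ref{thm1.3}. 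Interpolating on $u$ (e.g.\ $\|u\|_{\dB^{2/p}_{p,1}}\lesssim\|u\|_{\dB^{-1+2/p}_{p,1}}^{1/2}\|u\|_{\dB^{1+2/p}_{p,1}}^{1/2}$) does nothing to shrink this coefficient, and the Schonbek--Wiegner decay you invoke is not used for this theorem at all (the paper handles convection via the commutator estimate of Lemma \ref{lemq.9} plus Gronwall, producing the factor $\exp(C\|u_0\|_{L^2}^2)$).

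What the paper actually does is interpolate on the \emph{density} side: for subcritical regularity $s<\f2p$, Lemma \ref{lemh.0} bounds the \emph{entire} product (remainder and $T_ba$ included) as
\begin{equation*}
\|ab\|_{\dB^{s}_{p,1}}\lesssim\bigl(\|a\|_{L^\infty}+\|a\|_{L^\infty}^{\theta}\|a\|_{\dB^{2/p}_{p,\infty}}^{1-\theta}\bigr)\|b\|_{\dB^{s}_{p,1}},
\end{equation*}
and at the critical scale $s=\f2p$ (needed for $(\mu(\r)-1)d(u)$) Lemma \ref{lemh.1} splits $T_ba$ at an optimized frequency threshold $M$, balancing $2^{-M\e}\|a\|_{\dB^{2/p+\e}_{p,\infty}}$ against $2^{2M/p}\|a\|_{L^\infty}$, to produce the bound \eqref{h.6} in which a positive power $\|a\|_{L^\infty}^{p\e/(2+p\e)}$ multiplies complementary powers of $\|a\|_{\dB^{2/p+\e}_{p,\infty}}$ and of the two-scale norms of $b$. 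This is exactly why the hypotheses demand $\r_0-1\in\dB^{2/p+\e}_{p,1}$ and $u$ at the shifted scale $1+\f2p-\e$: the $\pm\e$ window supplies the room for the frequency optimization. With these lemmas every density-dependent term in the Duhamel estimate carries the factor $\|\r_0-1\|_{L^\infty}^{\d(\e)}$, yielding the linear-with-small-coefficient inequality \eqref{h.9} closed under \eqref{h.10}. Note also that your Neumann-series treatment of the pressure suffers from the same defect: multiplication by an $L^\infty$ function is not bounded on $\dB^{-1+2/p}_{p,1}$, so $\|1-\f1\r\|_{L^\infty}\le C\e_0$ alone does not make the operator norm of $f\mapsto\cQ\bigl((1-\f1\r)f\bigr)$ small — you again need a Lemma \ref{lemh.0}-type estimate, at which point the paper's direct maximal-regularity treatment is simpler.
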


\begin{rmk}\label{rmk1.3}
One may check \eqref{h.10} for the exact size of $\e_0$ in
\eqref{thm1.3a}.
\end{rmk}

\no{\bf Scheme of the proof and organization of the paper.}\
 In the
second section, we shall present the {\it a priori}  time decay
estimate for $\|u(t)\|_{L^2}$ and $\|\na u(t)\|_{L^2}$ which leads
to the crucial estimate for $\|\na u\|_{L^1(\R^+;L^q)}$ for $q$
satisfying $q\d>1.$  Based on these estimates and the observation in
Remark \ref{rmk1.1}, in Section 3, we shall present the {\it a
priori}  $L^1(\R^+;\dot B^1_{\infty,1})$ estimate for velocity
field.  In Section 4, we present a blow-up criterion for smooth
enough solutions of \eqref{1.1}. We then present the proofs of
Theorem \ref{thm1.1} in Section \ref{sect4} and Corollary \ref{cst}
in Section 6. Finally we present the proofs of Theorem \ref{thm1.2}
in Section 7 and Theorem \ref{thm1.3} in Section 8. For the
convenience of the readers, we collect some basic facts on
Littlewood-Paley analysis, which has been used throughout the paper,
in the Appendix \ref{abbendixb}.

Let us complete this section with the notations we are going to use
in this context.

\medbreak \noindent{\bf Notations:} Let $A, B$ be two operators, we
denote $[A,B]=AB-BA,$ the commutator between $A$ and $B$. For
$a\lesssim b$, we mean that there is a uniform constant $C,$ which
may be different on different lines, such that $a\leq Cb$. We shall
denote by $(a|b)$ (or $(a | b)_{L^2}$) the $L^2(\R^2)$ inner product
of $a$ and $b.$

For $X$ a Banach space and $I$ an interval of $\R,$ we denote by
${\mathcal{C}}(I;\,X)$ the set of continuous functions on $I$ with
values in $X,$ and by ${\mathcal{C}}_b(I;\,X)$ the subset of bounded
functions of ${\mathcal{C}}(I;\,X).$ For $q\in[1,+\infty],$ the
notation $L^q(I;\,X)$ stands for the set of measurable functions on
$I$ with values in $X,$ such that $t\longmapsto\|f(t)\|_{X}$ belongs
to $L^q(I).$ For any vector field $v=(v_1,v_2),$ we denote
$d(v)=\f12\bigl(\p_iv_j+\p_jv_i\bigr)_{i,j=1,2}.$ Finally,
$(d_{j})_{j\in\Z}$ (resp. $(c_j)_{j\in\Z}$) will be a generic
element of $\ell^1(\Z)$ (resp. $\ell^2(\Z))$ so that
$\sum_{j\in\Z}d_{j}=1$ (resp. $\sum_{j\in\Z}c_j^2=1).$

\renewcommand{\theequation}{\thesection.\arabic{equation}}
\setcounter{equation}{0}

\section{Basic Estimates}\label{sect2}

In this section, we shall improve the {\it a priori} estimate of
$\|\na u\|_{L^2(\R^+;L^p)}$, which was obtained by Desjardins
\cite{desjardins} in the case of  ${\Bbb{T}}^2,$ to be that of
$\|\na u\|_{L^1(\R^+;L^p)}$ for any $p\in (1/\d, p^\ast]$ with
$p^\ast$ being determined by \eqref{pstar}. This will be one of the
crucial ingredient for us to prove the
$L^1(\R^+;\dot{B}^1_{\infty,1}(\R^2))$ estimate of the velocity
field in Section \ref{sect3}. The main idea to achieve the estimate
of $\|\na u\|_{L^1(\R^+;L^p)}$ is to use the decay estimate for
velocity field in \cite{HP1, Sch, Wie} and the energy  method in
\cite{desjardins}.

\begin{prop}\label{propb.1}
{\sl Let $f(t)$ be a positive smooth function, let $(\r, u)$ be a
smooth enough solution of \eqref{1.1} on $[0,T^\ast)$ for some
positive time $T^\ast.$ Then under the assumption \eqref{thma.1},
one has \beq\label{p.1dfg}
\begin{split}
\f{d}{dt}&\Bigl(f(t)\int_{\R^2}\mu(\r)d:d\,dx\Bigr)+f(t)\int_{\R^2}|\p_tu|^2\,dt'\\
&\leq
4f'(t)\int_{\R^2}\mu(\r)d:d\,dx+C_{m,M}\Bigl(f(t)(1+\|u\|_{L^2}^2)\|\na
u\|_{L^2}^4\Bigr)\quad \mbox{for}\ \ t\in [0,T^\ast),
\end{split}
\eeq where $C_{m,M}$ is a positive constant depending on $m, M$ in
\eqref{thma.1}.}
\end{prop}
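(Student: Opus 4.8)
The plan is to perform a standard energy estimate on the velocity field, but weighted by the auxiliary function $f(t)$, and to control the nonlinear terms using only $L^2$-based quantities so that the final inequality closes. The natural quantity to differentiate is $\int_{\R^2}\mu(\r)d:d\,dx$, which up to constants is the viscous dissipation; note that because $\dv\,u=0$ one has $\int_{\R^2}\mu(\r)d:d\,dx = \f12\int_{\R^2}\mu(\r)|\na u|^2\,dx$, so this is comparable to $\norm{\na u}{L^2}^2$ up to the bounds $\underline\mu$ and $\mu(M)$ coming from \eqref{thma.1}. First I would rewrite the momentum equation in \eqref{1.1} as $\r\p_tu+\r u\cdot\na u-\dv(2\mu(\r)d)+\na\Pi=0$, using the continuity equation $\p_t\r+\dv(\r u)=0$ and $\dv\,u=0$ to pass from the conservative form to this convective form.

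Next I would test the momentum equation against $\p_tu$ and integrate over $\R^2$. The key identity is that $\int_{\R^2}\dv(2\mu(\r)d)\cdot\p_tu\,dx$ produces, after integration by parts, a term of the form $-\int_{\R^2}2\mu(\r)d:d(\p_tu)\,dx$, and since $\mu(\r)$ is transported by the flow (as $\r$ is, and $\mu$ is a fixed function), $\p_t\mu(\r)=-u\cdot\na\mu(\r)$; hence $\f{d}{dt}\int_{\R^2}\mu(\r)d:d\,dx$ splits into the desired good term $2\int_{\R^2}\mu(\r)d:d(\p_tu)\,dx$ plus a remainder $-\int_{\R^2}(u\cdot\na\mu(\r))\,d:d\,dx$ that must be absorbed. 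The pressure term drops out against $\p_tu$ because $\dv\,\p_tu=0$. After multiplying by $f(t)$ and adding the contribution $f'(t)\int_{\R^2}\mu(\r)d:d\,dx$ from the Leibniz rule, one arrives at
\beno
\f{d}{dt}\Bigl(f(t)\int_{\R^2}\mu(\r)d:d\,dx\Bigr)+f(t)\int_{\R^2}\r|\p_tu|^2\,dx
= f'(t)\int_{\R^2}\mu(\r)d:d\,dx + f(t)\cdot(\hbox{nonlinear remainders}),
\eeno
where the nonlinear remainders collect the convection term $-\int_{\R^2}\r(u\cdot\na u)\cdot\p_tu\,dx$ and the commutator-type term from $u\cdot\na\mu(\r)$. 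Since $\r\geq m>0$, the term $f(t)\int_{\R^2}\r|\p_tu|^2\,dx$ dominates $m\,f(t)\int_{\R^2}|\p_tu|^2\,dx$, which yields the dissipative term on the left of \eqref{p.1dfg} (the discrepancy between $\int|\p_tu|^2$ and $\int\r|\p_tu|^2$ and the harmless constant $4$ in front of $f'(t)$ are absorbed into $C_{m,M}$).

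The main obstacle is bounding the nonlinear remainders by the right-hand side of \eqref{p.1dfg}, i.e.\ by $C_{m,M}\,f(t)(1+\norm{u}{L^2}^2)\norm{\na u}{L^2}^4$ plus a fraction of the good dissipation $f(t)\int_{\R^2}|\p_tu|^2\,dx$ that can be absorbed to the left. For the convection term I would estimate $\bigl|\int_{\R^2}\r(u\cdot\na u)\cdot\p_tu\,dx\bigr|\le \f{m}{4}\norm{\p_tu}{L^2}^2 + C_{m,M}\norm{u\cdot\na u}{L^2}^2$ by Young's inequality, and then control $\norm{u\cdot\na u}{L^2}^2\le\norm{u}{L^4}^2\norm{\na u}{L^4}^2$ via the two-dimensional Ladyzhenskaya/Gagliardo--Nirenberg inequalities $\norm{u}{L^4}^2\lesssim\norm{u}{L^2}\norm{\na u}{L^2}$ and $\norm{\na u}{L^4}^2\lesssim\norm{\na u}{L^2}\norm{\na^2u}{L^2}$. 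The factor $\norm{\na^2u}{L^2}$ is the delicate point: I would recover it from $\p_tu$ and $\Pi$ by viewing the momentum equation as a (nonconstant-coefficient) Stokes system $-\dv(2\mu(\r)d)+\na\Pi=-\r\p_tu-\r u\cdot\na u$ and invoking the elliptic regularity estimate $\norm{\na^2u}{L^2}+\norm{\na\Pi}{L^2}\lesssim \norm{\r\p_tu+\r u\cdot\na u}{L^2}$ valid under \eqref{thma.1} when $\norm{\mu(\r)-1}{L^\infty}$ is small (the nonconstant coefficient being close to $1$ lets one perturb off the constant-coefficient Stokes estimate). Feeding this back gives $\norm{\na u}{L^4}^2\lesssim\norm{\na u}{L^2}(\norm{\p_tu}{L^2}+\norm{u}{L^4}\norm{\na u}{L^4})$; after a further Young inequality and reabsorption of the $\norm{\na u}{L^4}^2$ and $\norm{\p_tu}{L^2}^2$ terms (which is legitimate once $\norm{\na u}{L^2}$ is controlled, using that the coefficient of the absorbed terms is small), the net bound becomes a constant times $(1+\norm{u}{L^2}^2)\norm{\na u}{L^2}^4$, as required. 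The remainder coming from $u\cdot\na\mu(\r)$ is lower order: using $\na\mu(\r)=\mu'(\r)\na\r$ and $\norm{\mu'}{L^\infty}$ it is estimated by $\norm{u}{L^4}\norm{\na u}{L^4}\norm{\na u}{L^2}$ times bounds on $\na\r$, and is absorbed in the same way. Multiplying through by $f(t)$ and collecting terms yields exactly \eqref{p.1dfg}.
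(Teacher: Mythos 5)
There are two genuine gaps, and they are linked. The first concerns the term coming from $\p_t\mu(\r)=-u\cdot\na\mu(\r)$ when you differentiate $f(t)\int_{\R^2}\mu(\r)d:d\,dx$. You propose to estimate it directly, "times bounds on $\na\r$" — but no bound on $\na\r$ is admissible here: the constant $C_{m,M}$ in \eqref{p.1dfg} may depend only on $m,M$ in \eqref{thma.1} (and $\|\mu'\|_{L^\infty}$), and the proposition is used to produce decay estimates uniform in time, whereas $\|\na\r(t)\|_{L^r}$ is only controlled by $\|\na\r_0\|_{L^r}\exp\bigl(C\|\na u\|_{L^1_t(L^\infty)}\bigr)$, a quantity obtained only much later (and circularly, since it rests on the present proposition). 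The device of Desjardins, which the paper follows, avoids ever differentiating $\mu(\r)$: one integrates by parts so that the transport term becomes $-f(t)\int_{\R^2}(u\cdot\na)u\cdot\dv\bigl(2\mu(\r)d\bigr)dx$ and then substitutes the momentum equation $\dv(2\mu(\r)d)=\r\p_tu+\r u\cdot\na u+\na\Pi$. In particular, your claim that "the pressure term drops out" is true only for the direct pairing of $\na\Pi$ with $\p_tu$; the pressure re-enters through this substitution as $\sum_{i,k}\int_{\R^2}\Pi\,\p_iu^k\p_ku^i\,dx$, and its control — pairing the Coifman--Lions--Meyer--Semmes bound $\bigl\|\sum_{i,k}\p_iu^k\p_ku^i\bigr\|_{\mathscr{H}^1}\lesssim\|\na u\|_{L^2}^2$ against a $BMO$ bound for the part of $\Pi$ containing $\r\p_tu+\r u\cdot\na u$ — is the heart of the paper's proof and is entirely absent from yours.

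The second gap is the claimed perturbative Stokes estimate $\|\na^2u\|_{L^2}+\|\na\Pi\|_{L^2}\lesssim\|\r\p_tu+\r u\cdot\na u\|_{L^2}$ under mere $L^\infty$ smallness of $\mu(\r)-1$. This fails: perturbing off the constant-coefficient system puts $\dv\bigl(2(\mu(\r)-1)d\bigr)$ on the right-hand side, and bounding that in $L^2$ again requires $\na\mu(\r)$, i.e. $\na\r$. $L^\infty$-closeness of the coefficient yields Meyers-type gradient estimates for $p\leq p^\ast$ — exactly the restriction \eqref{pstar} — but no Hessian estimate; in the paper, $\|\D u\|_{L^1_t(L^2)}$ is estimated only in Proposition \ref{propu.1}, after the Lipschitz bound, and precisely at the cost of $\|\na\r\|_{L^r}$. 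The paper instead bounds $\|\na u\|_{L^4}$ with no second derivatives at all: from \eqref{b.5af}, the operator $\na(-\D)^{-1}\mathbb{P}\dv$ is of order zero, so with the interpolation inequality \eqref{f.2} one gets $\|\na u\|_{L^p}\le C_0p\|\mu(\r_0)-1\|_{L^\infty}\|\na u\|_{L^p}+C\sqrt{p}\,\|\na u\|_{L^2}^{2/p}\|\r\p_tu+\r u\cdot\na u\|_{L^2}^{1-2/p}$; absorbing the first term for $p\leq p^\ast$ and taking $p=4$ yields \eqref{b.10}, $\|\na u\|_{L^4}^2\le C\bigl(\|\na u\|_{L^2}\|\p_tu\|_{L^2}+\|u\|_{L^2}\|\na u\|_{L^2}^3\bigr)$, which is what closes \eqref{p.1dfg}. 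Repairing your argument essentially amounts to adopting both of these devices.
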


\begin{proof} The proof of this proposition basically follows from
that of Theorem 1 in \cite{desjardins}. For completeness, we outline
its proof here. Indeed thanks to \eqref{thma.1}, one has
\beq\label{b.3} m\leq \r(t,x)\leq M\quad \mbox{for}\quad t\in
[0,T^\ast). \eeq In what follows, the uniform constant $C$ always
depends on $m,M$ and sometimes on $\|\mu'\|_{L^\infty}$ also, yet we
neglect the subscripts $m, M$ for simplicity.

By taking $L^2$ inner product of the momentum equation of
\eqref{1.1} with $\p_tu$ and  using integration by parts, we deduce
from the derivation of (29) in \cite{desjardins} that
$$
\begin{aligned}
f&(t)\int_{\R^2}\rho|\partial_tu|^2\,dx+\frac{d}{dt}\Bigl(f(t)\int_{\R^2}\mu(\rho) d:d \,dx\Bigr)\\
&=f'(t)\int_{\R^2}\mu(\rho) d:d \,dx-f(t)\int_{\R^2}\partial_tu\ |\
\bigl(\rho u\cdot\nabla u\bigr)\,dx -f(t)\int_{\R^2}(u\cdot\nabla)u\
|\ \dv\bigl(2\mu(\rho)d\bigr)\,dx
\\&
=f'(t)\int_{\R^2}\mu(\rho) d:d \,dx-2f(t)\int_{\R^2}\partial_tu\ |\
\bigl(\rho u\cdot\nabla u\bigr)\,dx-f(t)\int_{\R^2}\r|u\cdot\na
u|^2\,dx\\
&\quad-f(t)\int_{\R^2}u\cdot\na u\ |\ \na\Pi\,dx,
\end{aligned}
$$ where in the last step we used the momentum equation of \eqref{1.1} so that
$\dv\bigl(2\mu(\rho)d\bigr)=\r\p_tu+\quad$ $\r u\cdot\na u+\na\Pi.$
This gives rise to  \beq\label{b.4}
\begin{aligned}
f(t)&\int_{\R^2}\rho|\partial_tu|^2\,dx+\frac{d}{dt}\Bigl(f(t)\int_{\R^2}\mu(\rho)
d:d \,dx\Bigr)\\
& \leq 2\Bigl( f'(t)\int_{\R^2}\mu(\rho) d:d \,dx+ f(t)\|\sqrt{\rho}
u\cdot\nabla u\|_{L^2}^2
-f(t)\int_{\R^2}u\cdot\nabla u \ |\  \nabla\Pi\,dx\Bigr)\\
&\leq 2f'(t)\int_{\R^2}\mu(\rho) d:d \,dx+C f(t)\Bigl(\|u\|_{L^4}^2
\|\nabla u\|_{L^4}^2+\Bigl|\sum_{i,k=1}^2\int_{\R^2}
\Pi\partial_iu^k\partial_ku^i\,dx\Bigr|\Bigr)
\end{aligned}
\eeq
 To deal with the pressure function $\Pi,$ we get, by taking
space divergence  to the momentum equation of \eqref{1.1}, that
\beq\label{b.4a}
\begin{aligned}
\Pi& =(-\Delta)^{-1}\dv\bigl(\rho\partial_tu+\rho u\cdot\nabla
u\bigr) -(-\Delta)^{-1}\dv\otimes\dv\bigl(2\mu(\rho)d\bigr),
\end{aligned}
\eeq from which, we deduce
$$
\begin{aligned}
\Bigl|\sum_{i,k=1}^2&\int_{\R^2}
\Pi\partial_iu^k\partial_ku^i\,dx\Bigr| \lesssim \|\nabla
u\|_{L^2}\|\nabla u\|_{L^4}^2
\\&\qquad\qquad
+\|(-\Delta)^{-1}\dv\bigl(\rho\partial_tu
+\rho(u\cdot\nabla)u\bigr)\|_{BMO}
\bigl\|\sum_{i,k=1}^2\partial_iu^k\partial_ku^i\bigr\|_{\mathscr{H}^1},
\end{aligned}
$$ where $\|f\|_{\mathscr{H}^1}$ denotes the Hardy norm of $f.$
Yet as  $\dv\,u=0,$  it follows from \cite{Coifman} that \beno
\bigl\|\sum_{i,k=1}^2\partial_iu^k\partial_ku^i\bigr\|_{\mathscr{H}^1}\lesssim
\|\nabla u\|_{L^2}^2,\eeno and $\|f\|_{BMO(\R^2)}\lesssim\|\nabla
f\|_{L^2(\R^2)},$ we obtain
$$\begin{aligned}
\Bigl|\sum_{i,k=1}^2\int_{\R^2}
\Pi\partial_iu^k\partial_ku^i\,dx\Bigr| \lesssim& \|\nabla
u\|_{L^2}\|\nabla u\|_{L^4}^2
 +\|\rho\partial_tu +\rho(u\cdot\nabla)u\|_{L^2}
\|\nabla u\|_{L^2}^2,
\end{aligned}
$$
which along with $\|u\|_{L^4}^2\lesssim \|u\|_{L^2}\|\na u\|_{L^2}$
and \eqref{b.4} ensures that \beq\label{b.5}
\begin{split}
f(t)&\int_{\R^2}\rho|\partial_tu|^2\,dx+\frac{d}{dt}\Bigl(f(t)\int_{\R^2}\mu(\rho)
d:d \,dx\Bigr)\\
& \leq 3f'(t)\int_{\R^2}\mu(\rho) d:d \,dx+ C\Bigl(f(t) \|\nabla
u\|_{L^2}^4+f(t)(1+\|u\|_{L^2})\|\nabla u\|_{L^2} \|\nabla
u\|_{L^4}^2\Bigr).
\end{split} \eeq To handle $\|\nabla u\|_{L^4},$ we write
\beq\label{b.5af} \na
u=\na(-\D)^{-1}\mathbb{P}\dv\bigl(2(\mu(\rho)-1)d\bigr)
-\na(-\D)^{-1}\mathbb{P}\dv\bigl(2\mu(\rho)d\bigr), \eeq which
together with the following interpolation inequality from \cite{CX}
\beq \|f\|_{L^{r}(\R^2)}\leq
C\sqrt{r}\|f\|_{L^2(\R^2)}^{\frac2r}\|\na
f\|_{L^2(\R^2)}^{1-\frac2r},\qquad 2\leq r<\infty, \label{f.2} \eeq
ensures that for any $p\in[2,\infty)$
$$
\|\nabla u\|_{L^p} \le C_0 p\|\mu(\rho_0)-1\|_{L^\infty}\|\nabla
u\|_{L^p} +C\sqrt{p}\|\nabla u\|_{L^2}^{\frac{2}{p}}
\|\mathbb{P}\dv\bigl(2\mu(\rho)d\bigr)\|_{L^2}^{1-\frac{2}{p}}
$$
with $C_0>0$ being a universal constant. Taking $\e_0$ sufficiently
small in \eqref{thma.1}, we obtain for $2\leq p\leq
p^*=\frac{1}{2C_0\|\mu(\rho_0)-1\|_{L^\infty}}$ that \beq\label{b.7}
\begin{aligned}
\|\nabla u\|_{L^p} &\le C\sqrt{p}\|\nabla u\|_{L^2}^{\frac{2}{p}}
\|\rho\partial_tu+\rho(u\cdot\nabla)u\|_{L^2}^{1-\frac{2}{p}}
\\&
\le C\sqrt{p}\|\nabla u\|_{L^2}^{\frac{2}{p}}
\big(\|\partial_tu\|_{L^2}^{1-\frac{2}{p}}
+\|u\|_{L^4}^{1-\frac{2}{p}}\|\nabla u\|_{L^4}^{1-\frac{2}{p}}\big).
\end{aligned}
\eeq In particular taking $p=4$ in \eqref{b.7} results in
\beq\label{b.10} \|\nabla u\|_{L^4}^2 \leq C\bigl( \|\nabla
u\|_{L^2}\|\partial_tu\|_{L^2}+\|u\|_{L^2}\|\nabla
u\|_{L^2}^3\bigr). \eeq Substituting the above inequality into
\eqref{b.5}, we obtain \eqref{p.1dfg}. This completes the proof of
the proposition.
\end{proof}

\begin{col}\label{colb.1}
{\sl Under the assumption of Proposition \ref{propb.1}, we have
\beq\label{thma.2} \begin{split} &\|u\|_{L^\infty_t(L^2)}^2+\|\na
u\|_{L^2_t(L^2)}^2\leq C \|u_0\|_{L^2}^2,\\
& \|\w{t'}^{\f12}\na
u\|_{L^\infty_{t}(L^2)}^2+\|\w{t'}^{\f12}\p_tu\|_{L^2_t(L^2)}^2 \leq
C\|\na u_0\|_{L^2}^2\exp\bigl(C\|u_0\|_{L^2}^4\bigr),
\end{split}\eeq
 for all
$t\in [0, T^\ast)$ and where $\w{t}\eqdefa e+t$.}
\end{col}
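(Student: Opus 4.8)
The plan is to obtain the two lines of \eqref{thma.2} in turn: the first from the basic energy balance, and the second by feeding that balance into Proposition \ref{propb.1} with a suitable time weight.

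First I would record the energy identity. Since the transport equation propagates the pointwise bounds, $m\le\r(t,x)\le M$ holds on $[0,T^\ast)$, and as $\mu$ is positive and non-decreasing with $\mu(1)=1$ this forces $\mu(\r)\ge\underline\mu>0$ there. Taking the $L^2$ inner product of the momentum equation of \eqref{1.1} with $u$ and integrating by parts, the pressure term disappears because $\dv\,u=0$, while the two transport terms combine, via the continuity equation, into $\f12\f{d}{dt}\int_{\R^2}\r|u|^2\,dx$; hence one gets the exact balance $\f12\f{d}{dt}\int_{\R^2}\r|u|^2\,dx+\int_{\R^2}2\mu(\r)\,d:d\,dx=0$. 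Using the identity $\int_{\R^2}d:d\,dx=\f12\|\na u\|_{L^2}^2$ valid for divergence free fields, together with $m\le\r\le M$ and $\mu(\r)\ge\underline\mu$, integration in time yields the first line of \eqref{thma.2}.

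Next I would apply \eqref{p.1dfg} with the weight $f(t)=\w{t}=e+t$, so that $f'(t)=1$. Setting $g(t)\eqdefa\w{t}\int_{\R^2}\mu(\r)\,d:d\,dx$, which is comparable to $\w{t}\|\na u\|_{L^2}^2$ because $\mu(\r)\approx1$, integration of \eqref{p.1dfg} on $[0,t]$ gives
\beq
\begin{split}
g(t)+\int_0^t\w{t'}\int_{\R^2}\r|\p_tu|^2\,dx\,dt'
&\le g(0)+4\int_0^t\int_{\R^2}\mu(\r)\,d:d\,dx\,dt'\\
&\quad+C\int_0^t\w{t'}(1+\|u\|_{L^2}^2)\|\na u\|_{L^2}^4\,dt'.
\end{split}
\eeq
The decisive observation is that the last integrand factorizes as $\bigl[C(1+\|u\|_{L^2}^2)\|\na u\|_{L^2}^2\bigr]\cdot\bigl[\w{t'}\|\na u\|_{L^2}^2\bigr]$, that is, as a time-integrable coefficient times a quantity comparable to $g(t')$; the coefficient is integrable because $\int_0^t\|\na u\|_{L^2}^2\,dt'\le C\|u_0\|_{L^2}^2$ and $\|u\|_{L^2}\le C\|u_0\|_{L^2}$ by the first line. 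Moreover $g(0)=e\int_{\R^2}\mu(\r_0)\,d(u_0):d(u_0)\,dx\lesssim\|\na u_0\|_{L^2}^2$, and the additive term $4\int_0^t\int_{\R^2}\mu(\r)\,d:d\lesssim\int_0^t\|\na u\|_{L^2}^2\le C\|u_0\|_{L^2}^2$ is likewise controlled by the energy.

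Finally, Gronwall's lemma bounds $g(t)$ by $\bigl(g(0)+4\int_0^t\int_{\R^2}\mu(\r)\,d:d\bigr)\exp\bigl(C\int_0^t(1+\|u\|_{L^2}^2)\|\na u\|_{L^2}^2\,dt'\bigr)$; since the exponent is $\le C(1+\|u_0\|_{L^2}^2)\|u_0\|_{L^2}^2\le C\|u_0\|_{L^2}^4$ after absorbing a fixed constant, this is exactly the $\w{t}\|\na u\|_{L^2}^2$ bound asserted in the second line of \eqref{thma.2}. The companion estimate for $\p_tu$ comes for free: the term $\int_0^t\w{t'}\int_{\R^2}\r|\p_tu|^2$ sits on the left, and $\r\ge m$ converts it into $\|\w{t'}^{\f12}\p_tu\|_{L^2_t(L^2)}^2$ bounded by the same right-hand side. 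I expect the main obstacle to be precisely the bookkeeping of the weighted quartic nonlinearity: one must read $\w{t'}\|\na u\|_{L^2}^4$ as (integrable coefficient)$\times g(t')$ rather than as a standalone source, so that Gronwall closes while the full weight $\w{t}$ is retained, and one must simultaneously verify that the term produced by $f'$, namely $4\int_{\R^2}\mu(\r)\,d:d$, is time-integrable — which is exactly what the basic energy estimate of the first line supplies.
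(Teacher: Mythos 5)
Your proof is correct and takes essentially the same route as the paper's: the energy identity \eqref{b.3a} gives the first line, and choosing $f(t')=\w{t'}$ in Proposition \ref{propb.1}, reading $\w{t'}(1+\|u\|_{L^2}^2)\|\na u\|_{L^2}^4$ as an integrable coefficient (controlled via the first line) times $\w{t'}\|\na u\|_{L^2}^2$, and applying Gronwall yields the second. Your explicit retention of the additive term $4\int_0^t\int_{\R^2}\mu(\r)\,d{:}d\,dx\,dt'\lesssim\|u_0\|_{L^2}^2$, which the paper silently drops, merely inserts a harmless $\|u_0\|_{L^2}^2$ into the prefactor and is, if anything, more careful bookkeeping than the original.
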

\begin{proof} We first get, by using
standard energy estimate to \eqref{1.1}, that \beq\label{b.3a}
\frac12\frac{d}{dt}\int_{\R^2}\r|u|^2\,dx+\int_{\R^2}\mu(\r)d:d\,dx=0,
\eeq which implies the first inequality of \eqref{thma.2}.

 Whereas taking $f(t')=\w{t'}$ in \eqref{p.1dfg} and integrating the resulting inequality over $[0,t],$ we
 obtain
$$
\int_{0}^t\int_{\R^2}\w{t'}|\partial_tu|^2\,dx\,dt'+\|\w{t'}^{\f12}\nabla
u\|_{L^\infty_t(L^2)}^2 \leq C\bigl( \|\nabla
u_0\|_{L^\infty_t(L^2)}^2+(1+\|u_0\|_{L^2}^2)\int_{0}^t\w{t'}\|\nabla
u\|_{L^2}^4\,dt'\bigr),
$$ Applying  Gronwall's inequality and using the first inequality of  \eqref{thma.2}  gives rise
to \beno \begin{split}
\int_{0}^t\int_{\R^2}\w{t'}|\partial_tu|^2\,dx\,dt'
+\|\w{t'}^{\f12}\nabla u\|_{L^\infty_t(L^2)}^2 \leq& C\|\nabla
u_0\|_{L^2}^2
\exp\bigl\{C(1+\|u_0\|_{L^2}^2)\|\nabla u\|_{L^2_t(L^2)}^2\bigr\}\\
\leq& C \|\nabla u_0\|_{L^2}^2\exp\bigl(C\|u_0\|_{L^2}^4\bigr).
\end{split}\eeno
This completes the proof of \eqref{thma.2}. \end{proof}

\begin{prop}\label{propb.2}
{\sl With the additional assumption that $\r_0-1\in L^2(\R^2),$
$u_0\in H^{-2\d}(\R^2)$ for $\d\in (0,\f12),$ then under the
assumption of Proposition \ref{propb.1}, we have \beq\label{p.2dfg}
\begin{split}
&\|\w{t'}^{\d}u\|_{L^2}+\|\w{t'}^{\d_-}\na u\|_{L^2_t(L^2)}\leq
C\sqrt{C_0}\exp\bigl(CC_0\bigr),
\end{split}
\eeq and \beq\label{p.2a}
\begin{split} &\|\w{t'}^{(\f{1}2+\d)_-}\na
u\|_{L^\infty_t(L^2)}+\|\w{t'}^{(\f{1}2+\d)_-}u_t\|_{L^2_t(L^2)}\leq
C\sqrt{C_0}\exp\bigl(CC_0\bigr),
\end{split}
\eeq for any $t\in [0,T^\ast)$ and $C_0$ being determined by
\eqref{p.2qwt}.
 } \end{prop}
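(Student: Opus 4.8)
The plan is to reduce the whole proposition to the single decay estimate $\|u(t)\|_{L^2}\lesssim\w{t}^{-\delta}\sqrt{C_0}\exp(CC_0)$, i.e. $E(t)\eqdefa\int_{\R^2}\r|u|^2\,dx\lesssim\w{t}^{-2\delta}C_0\exp(CC_0)$, and to recover the rest by weighted energy arguments. Once this is known, the dissipation bound in \eqref{p.2dfg} follows by multiplying the energy identity \eqref{b.3a} by $\w{t}^{2\delta_-}$ and integrating in time: the only new term, $\int_0^t\w{s}^{2\delta_--1}E(s)\,ds$, is finite because its integrand decays like $\w{s}^{2\delta_--1-2\delta}$, whose exponent is $<-1$. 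The higher-order bound \eqref{p.2a} then follows from Proposition \ref{propb.1}: taking $f(t)=\w{t}^{(1+2\delta)_-}$ in \eqref{p.1dfg}, integrating, and controlling $\int_0^t f'\int_{\R^2}\mu(\r)d:d\,dx\,ds$ and $\int_0^t f(1+\|u\|_{L^2}^2)\|\na u\|_{L^2}^4\,ds$ by $\int_0^t\w{s}^{2\delta_-}\|\na u\|_{L^2}^2\,ds$ (inserting the crude decay $\|\na u(s)\|_{L^2}^2\lesssim CC_0\w{s}^{-1}$ from \eqref{thma.2}), which is $\lesssim C_0\exp(CC_0)$ by the already-established \eqref{p.2dfg}.

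For the core decay I would run Schonbek's frequency-splitting method together with Wiegner's scheme. From the energy identity $\frac{d}{dt}E+2\int_{\R^2}\mu(\r)d:d\,dx=0$, the bounds $m\le\r\le M$, $\mu(\r)\ge\underline\mu$ and $\int_{\R^2}d:d\,dx=\frac12\|\na u\|_{L^2}^2$ yield, after bounding the dissipation from below on $\{|\xi|>g(t)\}$ with the Schonbek weight $g(t)^2=\beta\w{t}^{-1}$, a differential inequality $\frac{d}{dt}E+c_1g^2E\le c_2g^2S(t)$, where $S(t)\eqdefa\int_{|\xi|\le g(t)}|\wh u(\xi,t)|^2\,d\xi$. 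Choosing $\beta$ so that the integrating factor is $\w{t}^{\kappa}$ with $\kappa$ large reduces the problem to proving $S(t)\lesssim\w{t}^{-2\delta}$ (times the data).

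To estimate $S(t)$ I would use a Duhamel representation of the low frequencies. Applying the Leray projector $\mathbb{P}$ to the momentum equation written as $\pa_tu-\Delta u+\na\Pi=-\pa_t((\r-1)u)-\dive(\r u\otimes u)+\dive(2(\mu(\r)-1)d)$ removes the pressure and gives
\[\wh u(\xi,t)=e^{-t|\xi|^2}\wh u_0(\xi)+\int_0^t e^{-(t-s)|\xi|^2}\,\widehat{\mathbb{P}G}(\xi,s)\,ds,\]
with $G$ the above right-hand side. The free part contributes $\int_{|\xi|\le g}e^{-2t|\xi|^2}|\wh u_0|^2\,d\xi\lesssim\w{t}^{-2\delta}\|u_0\|_{H^{-2\delta}}^2$ by the standard heat-flow estimate. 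The term $\pa_t((\r-1)u)$ I would integrate by parts in $s$, turning it into $\xi$-independent contributions bounded by $\|\r_0-1\|_{L^2}\|u\|_{L^2}$ (at times $t$ and $0$, and under the kernel $|\xi|^2e^{-(t-s)|\xi|^2}$), while the viscosity term is estimated in $L^1$ by $|\xi|\,\|\mu'\|_{L^\infty}\|\r_0-1\|_{L^2}\|\na u\|_{L^2}$; here I use $\|\r(t)-1\|_{L^2}=\|\r_0-1\|_{L^2}$, valid since $\r$ is transported by the divergence-free field $u$. Once squared and integrated over the disc $|\xi|\le g(t)$, each of these gains the measure factor $g^2\sim\w{t}^{-1}$ and, after inserting the bounds of \eqref{thma.2}, decays at least like $\w{t}^{-2\delta}$.

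The main obstacle is the quadratic term $\dive(\r u\otimes u)$, which is borderline in two dimensions. Its naive treatment—the $L^1$ bound $|\widehat{\r u\otimes u}|\le M\|u\|_{L^2}^2$ together with the measure factor—produces a contribution to $S(t)$ that is \emph{quadratic} in $\sup_{[0,t]}\w{s}^{2\delta}E(s)$ and so cannot be absorbed for large data. Following Wiegner, I would instead keep the heat kernel and estimate $\|\r u\otimes u\|_{L^2}\le M\|u\|_{L^4}^2\lesssim M\|u\|_{L^2}\|\na u\|_{L^2}$ via the two-dimensional interpolation \eqref{f.2}; by Minkowski's inequality this replaces the offending term by $\lesssim M\int_0^t(t-s)^{-1/2}\|u(s)\|_{L^2}\|\na u(s)\|_{L^2}\,ds$, i.e. a contribution to $S(t)$ that is only \emph{linear} in $\sup_{[0,t]}\w{s}^{2\delta}E$ but carries the weakly singular kernel $(t-s)^{-1/2}$. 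Feeding this back into the integrating-factor inequality turns the estimate for $\w{t}^{2\delta}E(t)$ into a Volterra/Gronwall inequality with weakly singular kernel whose coefficient is $\|\na u\|_{L^2}^2$; since $\int_0^\infty\|\na u\|_{L^2}^2\,ds\lesssim\|u_0\|_{L^2}^2\lesssim C_0$ by \eqref{thma.2}, this singular Gronwall inequality closes, and its resolution is precisely what generates the factor $\exp(CC_0)$, while the free and density data $\|u_0\|_{H^{-2\delta}}^2+\|\r_0-1\|_{L^2}^2+\cdots$ appear as $\sqrt{C_0}$. I expect the bookkeeping of the variable-density and variable-viscosity terms (ensuring each is either a spatial divergence gaining a factor $|\xi|$ or is controlled by the conserved norm $\|\r_0-1\|_{L^2}$) and the resolution of this singular Gronwall inequality to be the only genuinely delicate points.
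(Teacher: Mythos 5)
Your proposal is correct in substance and shares the paper's outer skeleton --- Schonbek's frequency splitting applied to the energy identity, a Duhamel representation of the low frequencies, then the weighted multiplication of \eqref{b.3a} by $\w{t}^{(2\d)_-}$ (the paper's \eqref{p.7}) and the choice $f(t)=\w{t}^{(1+2\d)_-}$ in \eqref{p.1dfg} plus Gronwall for \eqref{p.2a}, both of which you reproduce exactly --- but your route through the core decay is genuinely different. The paper estimates \emph{all} Duhamel sources in $L^1_x$ (i.e.\ $L^\infty_\xi$): it keeps $(1-\r)(u_t+u\cdot\na u)$ and controls it by the weighted $u_t$-bound of \eqref{thma.2} (at the cost of $\ln\w{t}$ factors), and it bounds $u\otimes u$ by $\|u\|_{L^2}^2$, so the nonlinearity enters \emph{quadratically} as $g^6(t)\bigl(\int_0^t\|u\|_{L^2}^2\,dt'\bigr)^2$ in \eqref{p.6qwp}; it then proves first only the logarithmic decay $\|u(t)\|_{L^2}\le C\sqrt{C_0}/\ln\w{t}$ (Lemma \ref{lemb.1}, via $g^2(t)=3/(\w{t}\ln\w{t})$) and upgrades to the rate $\w{t}^{-\d}$ by invoking Wiegner's iterative bootstrap (pages 310-311 of \cite{Wie}). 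You instead put $\dive(\r u\otimes u)$ in $L^2$ with the heat-kernel factor $(t-s)^{-1/2}$ and $\|u\|_{L^4}^2\lesssim\|u\|_{L^2}\|\na u\|_{L^2}$, which \emph{linearizes} the inequality for $h(t)\eqdefa\sup_{s\le t}\w{s}^{2\d}\|u(s)\|_{L^2}^2$: after Cauchy--Schwarz in the Duhamel integral and Fubini against the integrating factor, the weakly singular kernel integrates out (here $2\d<1$ is what makes the Beta integrals converge) and one lands on $h(t)\le CC_0+C\int_0^t\|\na u\|_{L^2}^2\,h\,ds$, whence Gronwall and $\int_0^\infty\|\na u\|_{L^2}^2\,ds\lesssim C_0$ produce $\exp(CC_0)$ in a single pass, with no auxiliary log-decay lemma and no iteration. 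Two remarks: your assertion that the naive $L^1$ treatment of the quadratic term ``cannot be absorbed for large data'' is overstated --- the paper does exactly that, closed not by one-shot absorption but by the preliminary log decay and iteration; and your time integration by parts on $\pa_t((\r-1)u)$ (where the paper keeps $(1-\r)u_t$) is sound but needs one extra absorption step to be made explicit, since the boundary term $\widehat{(\r-1)u}(t,\xi)$ feeds $Cg^4(t)\|\r_0-1\|_{L^2}^2\|u(t)\|_{L^2}^2$ back into the differential inequality and must be hidden in $c_1g^2E$; this holds once $g^2(t)\|\r_0-1\|_{L^2}^2$ is small, i.e.\ for $t\gtrsim\|\r_0-1\|_{L^2}^2$, the initial stretch being harmless because the energy is bounded there. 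Net effect: your scheme is a legitimate streamlining that makes the origin of the exponential factor transparent, while the paper's buys elementary $L^\infty_\xi$ bounds at the price of the two-step bootstrap.
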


\begin{rmk} Large time decay estimates for $\|u(t)\|_{L^2}$ and $\|\na
u(t)\|_{L^2}$ were obtained by Gui and the authors in \cite{AGZ1}
for 3-D inhomogeneous Navier-Stokes system with constant viscosity.
Gui and the second author proved the time decay estimate for
$\|u(t)\|_{L^2}$ in \eqref{p.2dfg} for 2-D inhomogeneous
Navier-Sttokes system with variable density in \cite{GZ}. Similar
time decay estimates as \eqref{p.2dfg} and \eqref{p.2a} were
obtained by Huang and Paicu in \cite{HP1}. Note that for $p\in
[1,2)$ and $\d=\f1p-\f12,$ $L^p(\R^2)$ can be continuously imbedded
into $H^{-2\d}(\R^2)$, the decay estimates \eqref{p.2dfg} and
\eqref{p.2a} are slightly general than that in \cite{HP1}, where the
authors require the low frequency assumption for $u_0$ that $u_0\in
L^p(\R^2)$ for $p\in [1,2).$ For completeness, here we shall outline
the proof which basically follows from the corresponding argument in
\cite{Wie} for the classical 2-D Navier-Stokes system.
\end{rmk}

According to \cite{Wie} for classical Navier-Stokes system, the key
ingredient used in the proof of the decay estimate for
$\|u(t)\|_{L^2}$ in \eqref{p.2dfg} is the following Lemma:

\begin{lem}\label{lemb.1}
{\sl Under the assumption of Proposition \ref{propb.2}, we have \beq
\label{p.3qpo} \|u(t)\|_{L^2}\leq C\sqrt{C_0} \frac1{\ln\w{t}}\quad
\mbox{ for any}\quad t\in [0,T^\ast).\eeq}
\end{lem}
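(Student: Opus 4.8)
The plan is to combine the basic energy balance \eqref{b.3a} with Schonbek's Fourier splitting and an integral representation of $u$, exactly as in \cite{Wie}. Writing $E(t)\eqdefa\int_{\R^2}\r|u|^2\,dx$, the identity \eqref{b.3a} together with \eqref{b.3} and $\dv\,u=0$ gives $\f{d}{dt}E(t)+\underline\mu\|\na u\|_{L^2}^2\leq0$. For a radius $g(t)>0$ to be fixed, Plancherel's theorem yields $\|\na u\|_{L^2}^2\geq g(t)^2\bigl(\|u\|_{L^2}^2-\int_{|\xi|\leq g(t)}|\wh u(t,\xi)|^2\,d\xi\bigr)$, and since $E(t)\leq M\|u\|_{L^2}^2$ by \eqref{b.3}, I obtain the differential inequality
\beno
\f{d}{dt}E(t)+\f{\underline\mu\,g(t)^2}{M}E(t)\leq \underline\mu\,g(t)^2\int_{|\xi|\leq g(t)}|\wh u(t,\xi)|^2\,d\xi.
\eeno
Everything then reduces to controlling the low-frequency mass $\int_{|\xi|\leq g}|\wh u|^2\,d\xi$.

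To bound the low frequencies I would rewrite the momentum equation as a perturbed heat equation. Using the continuity equation to recast $(1-\r)\p_tu-\r u\cdot\na u=\p_t((1-\r)u)-\dv(\r u\otimes u)$ and $\dv(2\mu(\r)d)=\D u+\dv(2(\mu(\r)-1)d)$, the Leray projection of \eqref{1.1} becomes
\beno
\p_tu-\D u=\P\bigl[\p_t((1-\r)u)-\dv(\r u\otimes u)+\dv(2(\mu(\r)-1)d)\bigr].
\eeno
Duhamel's formula and an integration by parts in time on the $\p_t$ term then express $\wh u(t,\xi)$ as $e^{-|\xi|^2t}\wh u_0$, plus boundary contributions of the form $\widehat{\P((1-\r)u)}$, plus $\int_0^t e^{-|\xi|^2(t-s)}(\cdots)\,ds$ in which each remaining integrand carries either a factor $|\xi|$ (from the two divergence terms) or an extra $|\xi|^2$ (from the time integration by parts). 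The piece $e^{-|\xi|^2t}\wh u_0$ is handled by the low-frequency decay of the data, $\int_{|\xi|\leq g}e^{-2|\xi|^2t}|\wh u_0|^2\,d\xi\leq g^{4\d}\|u_0\|_{H^{-2\d}}^2$. The divergence terms are estimated through $|\widehat{\dv(\r u\otimes u)}(\xi)|\lesssim|\xi|\,\|u\|_{L^2}^2$ and $|\widehat{\dv(2(\mu(\r)-1)d)}(\xi)|\lesssim|\xi|\,\|\mu(\r)-1\|_{L^2}\|\na u\|_{L^2}$, while the boundary and $|\xi|^2$-terms, carrying $\|(1-\r)u\|_{L^1}\lesssim\|\r-1\|_{L^2}\|u\|_{L^2}$, contribute a multiple of $g^4E(t)$ that is reabsorbed by the damping on the left once $g$ is small. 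Here the hypotheses $\r_0-1\in L^2$ and $u_0\in H^{-2\d}$ enter decisively, together with the global bounds $\|u\|_{L^\infty_t(L^2)}+\|\na u\|_{L^2_t(L^2)}\lesssim\sqrt{C_0}$ from \eqref{thma.2}.

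Finally I would choose the logarithmically decaying splitting radius $g(t)^2=\f{\b}{\w{t}\ln\w{t}}$ with $\b$ a large constant, and integrate the resulting Gronwall inequality: the integrating factor behaves like $(\ln\w{t})^{-c\b/M}$, and a direct computation shows that every term on the right-hand side integrates to $\lesssim C_0(\ln\w{t})^{-2}$, which yields \eqref{p.3qpo}. The main difficulty is precisely the low-frequency estimate of the nonlinear and variable-coefficient forcing: in two dimensions the convection term $\dv(\r u\otimes u)$ decays too slowly to permit a polynomial rate, so the argument has to be arranged as a self-improving (bootstrap) estimate in which the decay being proved is fed back into $\int_0^t\|u\|_{L^2}^2\,ds$, and the splitting radius must be taken logarithmic rather than of order $\w{t}^{-1}$ so that the Gronwall integrating factor can absorb the marginal contributions.
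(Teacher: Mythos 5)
Your proposal is correct and follows the same Schonbek-splitting skeleton as the paper — the energy inequality \eqref{p.3a}, the split differential inequality \eqref{p.4thy}, a Duhamel representation of $\hat u(t,\xi)$ at low frequencies, and the logarithmic radius $g^2(t)\sim \bigl(\w{t}\ln\w{t}\bigr)^{-1}$ — but you handle the one delicate term genuinely differently. The paper keeps the forcing $(1-\r)(u_t+u\cdot\na u)$ as is and bounds $\int_0^t\|u_t\|_{L^2}\,dt'\lesssim \sqrt{\ln\w{t}}\,\|\w{t'}^{1/2}u_t\|_{L^2_t(L^2)}$ using the weighted estimate in the second line of \eqref{thma.2} together with \eqref{b.10}; this produces the $g^4\ln\w{t}$ term in \eqref{p.6qwp}. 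You instead use the continuity equation to write $(1-\r)\p_tu-\r u\cdot\na u=\p_t\bigl((1-\r)u\bigr)-\dv(\r u\otimes u)$ and integrate by parts in time in the Duhamel formula; your identity is correct, and the resulting pieces are all harmless: the boundary term at time $t$ gives $Cg^4\|\r_0-1\|_{L^2}^2 E(t)$, absorbable by the damping $g^2E(t)$ once $g$ is small (trivial for bounded times), while the $|\xi|^2$-weighted history term gives an even smaller $g^8\bigl(\int_0^t\|u\|_{L^2}\,dt'\bigr)^2$ contribution. What your route buys is that it needs only the plain energy bound, not the time-weighted $u_t$ estimates of Corollary \ref{colb.1} — it is closer in spirit to Wiegner's original argument for the classical system; what the paper's route buys is that it recycles estimates it must establish anyway for Propositions \ref{propb.2} and \ref{propb.3}. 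Two small remarks: first, no bootstrap is actually needed at this stage — the crude bound $\int_0^t\|u\|_{L^2}^2\,dt'\lesssim \sqrt{C_0}\,t$ already makes the $g^6$-term integrate to $C_0\ln\w{t}$ against the integrating factor $(\ln\w{t})^3$, yielding $E(t)\lesssim C_0(\ln\w{t})^{-2}$ directly (the feedback of decay into $\int_0^t\|u\|_{L^2}^2\,dt'$ is only needed later, in upgrading to the polynomial rate of \eqref{p.2dfg}), and your own ``direct computation'' sentence is the accurate one; second, your bound $\int_{|\xi|\leq g}e^{-2t|\xi|^2}|\hat u_0|^2\,d\xi\leq g^{4\d}\|u_0\|_{H^{-2\d}}^2$ really requires the homogeneous norm $\|u_0\|_{\dot H^{-2\d}}$ (via $|\hat u_0|^2\leq g^{4\d}|\xi|^{-4\d}|\hat u_0|^2$ on the shrinking region), which is the same convention the paper implicitly uses in estimating $\int_{S(t)}e^{-t|\xi|^2}|\hat u_0|^2\,d\xi$.
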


\begin{proof} Following the proofs of Theorem 3.1
of \cite{HP1} and Lemma 4.4 of \cite{GZ}, we first deduce from
\eqref{b.3a} that \beq\label{p.3a}
\f{d}{dt}\|\sqrt{\r}u\|_{L^2}^2+2\underline{\mu}\|\na
u\|_{L^2}^2\leq 0. \eeq Applying Schonbek's strategy in \cite{Sch},
by splitting the phase space $\R^2$ into two time-dependent domain:
$\R^2=S(t)\cup S(t)^c,$ where $S(t)\eqdefa \bigl\{\xi:\ |\xi|\leq
\sqrt{\f{M}{2\underline{\mu}}}g(t)\ \bigr\}$ for some $g(t),$ which
will be chosen later on. Then we deduce from \eqref{p.3a} that
\beq\label{p.4thy}
\f{d}{dt}\|\sqrt{\r}u(t)\|_{L^2}^2+g^2(t)\|\sqrt{\r}u(t)\|_{L^2}^2\leq
Mg^2(t)\int_{S(t)}|\hat{u}(t,\xi)|^2\,d\xi. \eeq To deal with the
low frequency part of $u$ on the right hand side of \eqref{p.4thy},
we write \beno
\begin{split}
u(t)=e^{t\D}u_0+\int_0^te^{(t-t')\D}\Bbb{P}\Bigl(\dive\bigl((\mu(\r)-1)d(u)\bigr)+(1-\r)(u_t+u\cdot\na
u)-u\cdot\na u\Bigr)(s)\,dt'.
\end{split}
\eeno Taking Fourier transform with respect to $x$ variables gives
rise to \beno
\begin{split}
|\hat{u}(t,\xi)|\lesssim &
e^{-t|\xi|^2}|\hat{u}_0(\xi)|+\int_0^te^{-(t-t')|\xi|^2}\Bigl(|\xi|\bigl(\bigl|\cF_x[(\mu(\r)-1)d(u)]\bigr|+\bigl|
\cF_x(u\otimes u)\bigr|\bigr)\\
&+\bigl|\cF_x[(1-\r)(u_t+u\cdot\na u)]\bigr|\Bigr)(t')\,dt',
\end{split}
\eeno so that \beq\label{p.5yui}
\begin{split}
\int_{S(t)}&|\hat{u}(t,\xi)|^2\,d\xi\lesssim
\int_{S(t)}e^{-t|\xi|^2}|\hat{u}_0(\xi)|^2\,d\xi+g^4(t)\Bigl(\int_0^t\bigl(\bigl\|\cF_x[(\mu(\r)-1)d(u)]\bigr\|_{L^\infty_\xi}\\
&+\bigl\| \cF_x(u\otimes
u)\bigr\|_{L^\infty_\xi}\bigr)\,dt'\Bigr)^2+g^2(t)\Bigl(\int_0^t\bigl\|\cF_x[(1-\r)(u_t+u\cdot\na
u)]\bigr\|_{L^\infty_\xi}\,dt'\Bigr)^2.
\end{split}
\eeq It is easy to observe that \beno
\int_{S(t)}e^{-t|\xi|^2}|\hat{u}_0(\xi)|^2\,d\xi\leq
\w{t}^{-2\d}\|u_0\|_{H^{-2\d}}^2, \eeno and \beno
\begin{split}
\Bigl(\int_0^t\bigl(&\bigl\|\cF_x[(\mu(\r)-1)d(u)]\bigr\|_{L^\infty_\xi}+\bigl\|
\cF_x(u\otimes u)\bigr\|_{L^\infty_\xi}\bigr)\,dt'\Bigr)^2\\
&\lesssim \Bigl(\int_0^t\bigl(\|(\mu(\r)-1)\cM(u)\|_{L^1}+\|
u\otimes u\|_{L^1}\bigr)\,dt'\Bigr)^2\\
&\lesssim \|\mu(\r)-1\|_{L^\infty_t(L^2)}^2\|\na
u\|_{L^2_t(L^2)}^2t+\Bigl(\int_0^t\|u(t')\|_{L^2}^2\,dt'\Bigr)^2,
\end{split}
\eeno Finally thanks to \eqref{b.10} and \eqref{thma.2},  we have
\beno \Bigl(\int_0^t\|u_t(t')\|_{L^2}\,dt'\Bigr)^2\leq
C\ln\w{t}\int_0^t\w{t'}\|u_t(t')\|_{L^2}^2\,dt'\leq C\|\na
u_0\|_{L^2}^2\exp\bigl(C\|u_0\|_{L^2}^4\bigr)\ln\w{t}, \eeno and
\beno
\begin{split}
\int_0^t\|u\|_{L^4}\|\na u\|_{L^4}\,dt'\lesssim &
\int_0^t\bigl(\|u\|_{L^2}^{\f12}\|\na
u\|_{L^2}\|u_t\|_{L^2}^{\f12}+\|u\|_{L^2}\|\na
u\|_{L^2}^2\bigr)\,dt'\\
\lesssim &\|u\|_{L^\infty_t(L^2)}^{\f12}\|\na
u\|_{L^2_t(L^2)}\|\w{t}^{\f12}u_t\|_{L^2}^{\f12}\ln^{\f14}\w{t}+\|u\|_{L^\infty_t(L^2)}\|\na
u\|_{L^2_t(L^2)}^2\\
\leq & C \|\na
u_0\|_{L^2}^{\f12}\exp\bigl(C\|u_0\|_{L^2}^4\bigr)\ln^{\f14}\w{t},
\end{split}
\eeno which leads to \beno
\begin{split}
\Bigl(\int_0^t\bigl\|&\cF_x[(1-\r)(u_t+u\cdot\na
u)]\bigr\|_{L^\infty_\xi}(t')\,dt'\Bigr)^2\\ \leq
&\|(1-\r)\|_{L^\infty_t(L^2)}^2\Bigl[\Bigl(\int_0^t\|u_t(t')\|_{L^2}\,dt'\Bigr)^2+\Bigl(\int_0^t\|u\|_{L^4}\|\na
u\|_{L^4}\,dt'\Bigr)^2\Bigr]\\
\leq &C\|\r_0-1\|_{L^2}^2\|\na
u_0\|_{L^2}^2\exp\bigl(C\|u_0\|_{L^2}^4\bigr)\ln\w{t}.
\end{split}
\eeno Resuming the above estimates into \eqref{p.5yui} and then
using \eqref{p.4thy}, we obtain \beq\label{p.6qwp}
\begin{split}
\f{d}{dt}\|\sqrt{\r}u(t)\|_{L^2}^2+g^2(t)\|\sqrt{\r}u(t)\|_{L^2}^2\leq&
M g^6(t)\Bigl(\int_0^t\|u(t')\|_{L^2}^2\,dt'\Bigr)^2\\
&+CC_0\Bigl(g^2(t)\w{t}^{-2\d}+g^6(t)\w{t}+g^4(t)\ln\w{t}\Bigr),
\end{split}
\eeq for $C_0$ given by \eqref{p.2qwt}. Taking
$g^2(t)=\f3{\w{t}\ln\w{t}}$ in the above inequality and then
integrating the resulting inequality over $[0,t]$  resulting
\eqref{p.3qpo}.
\end{proof}

We now turn to the proof of Proposition \ref{propb.2}.

\begin{proof}[Proof of Proposition \ref{propb.2}] With Lemma
\ref{lemb.1} and \eqref{p.6qwp}, the decay estimate of
$\|u(t)\|_{L^2}$ in \eqref{p.2dfg} follows by an standard argument
as \cite{Wie} for the classical 2-D Navier-Stokes system (One may
check page 310-311 of \cite{Wie} for details). Whereas multiplying
\eqref{p.3a} by $\w{t}^{(2\d)_-}$ and then integrating the resulting
inequality over $[0,t],$ we obtain \beq \label{p.7}
\begin{split}
\|\w{t}^{\d_-}u\|_{L^2}^2+2\underline{\mu}\|\w{t'}^{\d_-}\na
u\|_{L^2_t(L^2)}^2\leq & C\bigl(
\|u_0\|_{L^2}^2+\int_0^t\w{t'}^{(2\d-1)_-}\|u(t')\|_{L^2}^2\,dt'\bigr)\\
\leq &CC_0\exp\bigl(CC_0),
\end{split}
\eeq for $C_0$ given by \eqref{p.2qwt}. This proves \eqref{p.2dfg}.

On the other hand, taking $f(t)=\w{t}^{(1+2\d)_-}$ in
\eqref{p.1dfg}, and then using \eqref{p.7} and Gronwall's
inequality, we obtain \eqref{p.2a}. This completes the proof of the
Proposition.
\end{proof}

\no{\bf Notation:} In all that follows, for $C_0$ given by
\eqref{p.2qwt},  we already denote \beq\label{p.7a} C_1\eqdefa
C\sqrt{C_0}\exp\bigl(CC_0\bigr).\eeq

We now present the key estimate in this section:

\begin{prop}\label{propb.3}
{\sl Under the assumptions of Proposition \ref{propb.2}, for $p\in
[2, p^\ast]$ with $p^\ast$ being determined by \eqref{pstar}, we
have for any $t\in [0, T^\ast)$ \beq \label{p.8}
\|\w{t'}^{(\f12+\d-\f1p)_-}\nabla u\|_{L^2_t(L^p)} \leq
\sqrt{p}C_1^{2-\f2p}.\eeq }
\end{prop}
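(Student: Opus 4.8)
The plan is to upgrade the pointwise-in-time interpolation inequality \eqref{b.7} to the weighted space--time bound \eqref{p.8} by inserting the weight $\w{t'}^{(\f12+\d-\f1p)_-}$ and integrating. Writing $N\eqdefa\r\p_tu+\r u\cdot\na u$, so that \eqref{b.7} reads $\|\na u\|_{L^p}\lesssim\sqrt p\,\|\na u\|_{L^2}^{\f2p}\|N\|_{L^2}^{1-\f2p}$ for $2\le p\le p^\ast$, I would square, insert the weight, and integrate to get
$$\|\w{t'}^{(\f12+\d-\f1p)_-}\na u\|_{L^2_t(L^p)}^2\lesssim p\int_0^t\w{t'}^{2(\f12+\d-\f1p)_-}\|\na u\|_{L^2}^{\f4p}\|N\|_{L^2}^{2(1-\f2p)}\,dt'.$$
Bounding $\|\na u\|_{L^2}^{4/p}$ pointwise by the $L^\infty_t$ decay $\|\na u\|_{L^2}\le C_1\w{t'}^{-(\f12+\d)_-}$ of \eqref{p.2a} and then applying H\"older in time with the conjugate pair $(\f p2,\f p{p-2})$---placing $\w{t'}^{2(\f12+\d)_-}\|N\|_{L^2}^2$ in the second slot---leaves in the first slot the time integral $\int_0^\infty\w{t'}^{p((\f12+\d-\f1p)_--(\f12+\d)_-)}\,dt'$, which is finite because its exponent is $-1$ to leading order and can be arranged to be $<-1$ by a suitable choice of the two $_-$. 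This reduces \eqref{p.8} to the single claim $\int_0^t\w{t'}^{2(\f12+\d)_-}\|N\|_{L^2}^2\,dt'\lesssim C_1^4$.

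Using $m\le\r\le M$ this splits into $\int\w{t'}^{2(\f12+\d)_-}\|\p_tu\|_{L^2}^2$, which is $\le C_1^2$ directly from \eqref{p.2a}, and the nonlinear piece $\int\w{t'}^{2(\f12+\d)_-}\|u\cdot\na u\|_{L^2}^2$. For the latter I would combine $\|u\cdot\na u\|_{L^2}\le\|u\|_{L^4}\|\na u\|_{L^4}$, the Ladyzhenskaya inequality $\|u\|_{L^4}^2\lesssim\|u\|_{L^2}\|\na u\|_{L^2}$, and \eqref{b.10} to obtain $\|u\cdot\na u\|_{L^2}^2\lesssim\|u\|_{L^2}\|\na u\|_{L^2}^2\|\p_tu\|_{L^2}+\|u\|_{L^2}^2\|\na u\|_{L^2}^4$. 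The heart of the matter is the bookkeeping of the powers of $C_1$: spending the decay bounds \eqref{p.2dfg}--\eqref{p.2a} on every factor produces $C_1^6$, two powers too many. The remedy is to spend instead the cheaper energy bounds \eqref{thma.2}, of size $\|u_0\|_{L^2}$, on the $u$--factors and on one pair of $\na u$--factors, keeping the decay bound \eqref{p.2a} only where it is needed for time integrability. Using $\|u\|_{L^2}\lesssim\|u_0\|_{L^2}$ and $\int_0^t\|\na u\|_{L^2}^2\,dt'\lesssim\|u_0\|_{L^2}^2$, the pointwise bound $\|\na u\|_{L^2}^2\le C_1^2\w{t'}^{-2(\f12+\d)_-}$, and (for the first term, after a Cauchy--Schwarz in time) $\int\w{t'}^{2(\f12+\d)_-}\|\p_tu\|_{L^2}^2\le C_1^2$, the two nonlinear terms are controlled by $\|u_0\|_{L^2}^2C_1^2$ and $\|u_0\|_{L^2}^4C_1^2$ respectively. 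Since $\|u_0\|_{L^2}^2\le\sqrt{C_0}\lesssim C_1$ and $\|u_0\|_{L^2}^4\le C_0\lesssim C_1^2$---both built into the definitions \eqref{p.2qwt}, \eqref{p.7a}---each is $\lesssim C_1^4$, whence $\int\w{t'}^{2(\f12+\d)_-}\|N\|_{L^2}^2\lesssim C_1^4$ and, after the H\"older reduction, exactly $\|\w{t'}^{(\f12+\d-\f1p)_-}\na u\|_{L^2_t(L^p)}\lesssim\sqrt p\,C_1^{2-\f2p}$.

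I expect the genuine obstacle to be precisely this power-counting, together with the matching of time weights: one must resist using the strong decay $\|u(t')\|_{L^2}\le C_1\w{t'}^{-\d}$ on the slowly decaying velocity factors and instead trade that decay for the smaller energy constant $\|u_0\|_{L^2}$, all while leaving enough weight on the $\na u$ and $\p_tu$ factors both for the residual time integral to converge and for the weighted estimates of \eqref{p.2a} to apply. One must also carry the interpolation constant $\sqrt p$ from \eqref{f.2} through \eqref{b.7} unchanged, since its linear growth in $p$ is exactly what will later render the Littlewood--Paley sum in Section \ref{sect3} summable.
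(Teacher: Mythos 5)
Your proof is correct and closes with exactly the right power of $C_1$; it uses the same raw ingredients as the paper (\eqref{b.7}, \eqref{b.10}, the decay estimates \eqref{p.2dfg}--\eqref{p.2a}, the energy bound \eqref{thma.2}) but organizes them differently. The paper first substitutes \eqref{b.10} into \eqref{b.7} to get a three-term pointwise bound on $\|\na u\|_{L^p}$, then distributes the weight $\w{t'}^{(\f12+\d-\f1p)_-}$ factor by factor, splitting the powers of $\|\na u\|_{L^2}$ between the weighted $L^\infty_t$ norm from \eqref{p.2a} and the weighted $L^2_t$ norm from \eqref{p.2dfg}; this yields $\sqrt{p}\,C_1^{2-\f2p}\bigl(1+\|u_0\|_{L^2}^{1-\f2p}\bigr)$, and the stray $\|u_0\|_{L^2}$ is absorbed via \eqref{p.2qwt} and \eqref{p.7a} --- the same absorption device you invoke. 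You instead keep \eqref{b.7} in its two-factor form, spend the pointwise decay $\|\na u(t')\|_{L^2}\le C_1\w{t'}^{-(\f12+\d)_-}$ on the $\|\na u\|_{L^2}^{\f2p}$ factor, and perform a single H\"older in time with exponents $\bigl(\f p2,\f p{p-2}\bigr)$ that isolates the one quantity $\int_0^t\w{t'}^{2(\f12+\d)_-}\|\r\p_tu+\r u\cdot\na u\|_{L^2}^2\,dt'\lesssim C_1^4$; your treatment of it (unweighted energy bounds on the slowly decaying factors, \eqref{p.2a} only where time integrability demands it, then $\|u_0\|_{L^2}^2\lesssim C_1$ and $\|u_0\|_{L^2}^4\lesssim C_1^2$) reproduces the paper's power count exactly. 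What your route buys is a cleaner modular structure --- all the nonlinearity is quarantined in one weighted space-time norm of $\r(\p_tu+u\cdot\na u)$, and your explicit observation that the residual time integral has leading exponent $-1$, made convergent by a strict mismatch between the two $_-$ subscripts, makes transparent a point the paper leaves implicit in its choice of $L^\infty_t$/$L^2_t$ splittings. What the paper's route buys is that it never needs the borderline integral: over-weighting each factor (using $\w{t}\ge e$) lets every piece be estimated directly by \eqref{p.2dfg}--\eqref{p.2a}.

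Two minor remarks. At the endpoint $p=2$ your H\"older step degenerates, but there \eqref{p.8} is immediate from \eqref{p.2dfg}, so nothing is lost. And your closing motivation for carrying $\sqrt{p}$ rather than $p$ is slightly off target: in Section \ref{sect3} the estimate \eqref{p.8} is applied with a single fixed $q\in(1/\d,p^\ast]$, not summed over $p$; the place where the growth in $p$ is genuinely critical is the absorption in \eqref{b.7} itself, where the term $C_0p\|\mu(\r_0)-1\|_{L^\infty}\|\na u\|_{L^p}$ must be absorbed, which is what restricts $p\le p^\ast$. This does not affect the validity of your argument.
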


\begin{proof} We first, get by resuming \eqref{b.10} into
\eqref{b.7},
that \beno
\begin{split}
\|\na u\|_{L^p}\leq C\sqrt{p}\Bigl(\|\na
u\|_{L^2}^{\f2p}\|u_t\|_{L^2}^{1-\f2p}+\|u\|_{L^2}^{\f12(1-\f2p)}\|\na
u\|_{L^2}\|u_t\|_{L^2}^{\f12(1-\f2p)}+\|u\|_{L^2}^{1-\f2p}\|\na
u\|_{L^2}^{2-\f2p}\Bigr).
\end{split}
\eeno Notice that $p\geq 2,$ multiplying $\w{t'}^{(\f12+\d-\f1p)_-}$
to the above inequality and then taking $L^2(0,t)$ norm of the
resulting inequality, we obtain \beno
\begin{split}
&\|\w{t'}^{(\f12+\d-\f1p)_-}\na u\|_{L^2_t(L^p)}\leq
C\sqrt{p}\Bigl(\|\w{t'}^{\d_-}\na
u\|_{L^2_t(L^2)}^{\f2p}\|\w{t'}^{(\f12+\d)_-}u_t\|_{L^2_t(L^2)}^{1-\f2p}\\
&\qquad\qquad+\|\w{t'}^\d
u\|_{L^\infty_t(L^2)}^{\f12-\f1p}\|\w{t'}^{(\f12+\d)_-}\na
u\|_{L^\infty_t(L^2)}^{\f12-\f1p}\|\w{t'}^{\d_-}\na
u\|_{L^2_t(L^2)}^{\f12+\f1p}\|\w{t'}^{(\f12+\d)_-}u_t\|_{L^2_t(L^2)}^{\f12-\f1p}\\
&\qquad\qquad+\|
u\|_{L^\infty_t(L^2)}^{1-\f2p}\|\w{t'}^{(\f12+\d)_-}\na
u\|_{L^\infty_t(L^2)}^{1-\f2p}\|\w{t'}^{\d_-}\na
u\|_{L^2_t(L^2)}\Bigr).
\end{split}
\eeno Then  we get, by resuming \eqref{p.2dfg} and \eqref{p.2a} into
the above inequality, that \beno \|\w{t'}^{(\f12+\d-\f1p)_-}\nabla
u\|_{L^2_t(L^p)} \leq
\sqrt{p}C_1^{2-\f2p}\bigl(1+\|u_0\|_{L^2}^{1-\f2p}\bigr), \eeno
which together with \eqref{p.2qwt} and \eqref{p.7a} leads to
\eqref{p.8}.
\end{proof}

\renewcommand{\theequation}{\thesection.\arabic{equation}}
\setcounter{equation}{0}
\section{The $L^1(\R^+;\dot B^1_{\infty,1})$ estimate for the  velocity
field}\label{sect3}

The goal of the this section is to present the {\it a priori}
 $L^1(\R^+;\dot B^1_{\infty,1})$ estimate for the  velocity field, which
 is the most important ingredient used in the proof of Theorem
 \ref{thm1.1}.

\begin{lem}\label{lema.1.1}
{\sl Let $q\in (1/\d,p^\ast]$ with  $p^\ast$ being determined by
\eqref{pstar} and $\e>0$ such that $\frac{2}{q}+\e<1.$ Let
$(\r,u,\na\Pi)$ be a  smooth enough solution of \eqref{1.1} on
$[0,T^\ast).$ Then under the assumptions Proposition \ref{propb.2},
one has \beq \label{a.1rf} \|u\|_{\widetilde L^1_t(\dot
B^{1+\frac{2}{q}+\varepsilon}_{q,\infty})} \leq\|u_0\|_{\dot
B^{-1+\frac{2}{q}+\varepsilon}_{q,\infty}}
+CC_1^2\bigl(1+\|\rho\|_{L^\infty_t(B^{\frac{2}{q}+\varepsilon}_{\infty,\infty})}\bigr)\quad
\mbox{for \ any }\quad t<T^\ast,\eeq where the norm
$\|u\|_{\widetilde L^1_t(\dot
B^{1+\frac{2}{q}+\varepsilon}_{q,\infty})} $ is given by Definition
\ref{chaleur+} and the constant $C_1$ by \eqref{p.7a}. }
\end{lem}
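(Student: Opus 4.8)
The plan is to rewrite the velocity equation as a forced heat equation and then invoke the smoothing estimate of Definition \ref{chaleur+}, reducing the whole lemma to a bound on the source term in $\widetilde L^1_t(\dot B^{-1+\frac2q+\varepsilon}_{q,\infty})$. Since $\mu(1)=1$ and $\dv\,u=0$, one has $\dv(2\mu(\rho)d)=\Delta u+\dv(2(\mu(\rho)-1)d)$; applying the Leray projection $\mathbb{P}$ to the momentum equation of \eqref{1.1} kills the pressure and gives $\partial_t u-\Delta u=\mathbb{P}f$ with $f=(1-\rho)\partial_t u-\rho\,u\cdot\nabla u+\dv(2(\mu(\rho)-1)d)$. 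The crucial structural input is Remark \ref{rmk1.1}: $\mathbb{P}$ (a matrix of Riesz transforms) is bounded on every $\dot B^s_{q,\infty}$, so it may be discarded. The heat estimate then yields $\|u\|_{\widetilde L^1_t(\dot B^{1+\frac2q+\varepsilon}_{q,\infty})}\le \|u_0\|_{\dot B^{-1+\frac2q+\varepsilon}_{q,\infty}}+C\|f\|_{\widetilde L^1_t(\dot B^{-1+\frac2q+\varepsilon}_{q,\infty})}$, and it remains to treat the three pieces of $f$.

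For the viscous piece I would use $\|\dv(2(\mu(\rho)-1)d)\|_{\dot B^{-1+\frac2q+\varepsilon}_{q,\infty}}\lesssim \|(\mu(\rho)-1)\nabla u\|_{\dot B^{\frac2q+\varepsilon}_{q,\infty}}$ and the Bony product law at the positive index $\frac2q+\varepsilon\in(0,1)$, which splits it as $\|\mu(\rho)-1\|_{L^\infty}\|\nabla u\|_{\dot B^{\frac2q+\varepsilon}_{q,\infty}}+\|\mu(\rho)-1\|_{\dot B^{\frac2q+\varepsilon}_{\infty,\infty}}\|\nabla u\|_{L^q}$. After integrating in time, the first term becomes $\|\mu(\rho)-1\|_{L^\infty}\|u\|_{\widetilde L^1_t(\dot B^{1+\frac2q+\varepsilon}_{q,\infty})}$, absorbed into the left-hand side by the smallness \eqref{thma.1}; for the second term the composition estimate gives $\|\mu(\rho)-1\|_{\dot B^{\frac2q+\varepsilon}_{\infty,\infty}}\lesssim \|\mu'\|_{L^\infty}\|\rho\|_{B^{\frac2q+\varepsilon}_{\infty,\infty}}$, and since $q>1/\delta$ Proposition \ref{propb.3} (i.e. \eqref{p.8}) makes the time weight integrable and yields $\|\nabla u\|_{L^1_t(L^q)}\lesssim C_1^2$. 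This produces exactly the factor $C_1^2\bigl(1+\|\rho\|_{L^\infty_t(B^{\frac2q+\varepsilon}_{\infty,\infty})}\bigr)$.

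For the convection piece I would not use a Besov product law but rather place $\rho\,u\cdot\nabla u$ in $L^1_t(L^{q_2})$ with $\frac{1}{q_2}=\frac{1-\varepsilon}2$, so that the scaling embedding $L^{q_2}\hookrightarrow \dot B^{-1+\frac2q+\varepsilon}_{q,\infty}$ is licit (it holds since $q>1/\delta>2$ and $\varepsilon$ is small). Writing $\|\rho\,u\cdot\nabla u\|_{L^{q_2}}\lesssim \|u\|_{L^a}\|\nabla u\|_{L^b}$ with $\frac1a+\frac1b=\frac1{q_2}$, the Gagliardo--Nirenberg inequality together with the weighted decay bounds \eqref{p.2dfg}--\eqref{p.2a} and \eqref{p.8} makes the time integral converge and bounds this contribution by $C_1^2$.

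The hard part is the term $(1-\rho)\partial_t u$. The estimates of Proposition \ref{propb.2} only control $\partial_t u$ in $L^2$ in space (with a time weight), so at best $\partial_t u\in L^1_t(L^2)\hookrightarrow L^1_t(\dot B^{-1+\frac2q}_{q,\infty})$ --- exactly the critical index, with no room for the extra $\varepsilon$ that a naive Bernstein/embedding argument would require, since the paraproduct $T_{1-\rho}\partial_t u$ merely preserves the regularity of $\partial_t u$. Closing this term at the super-critical level $\dot B^{1+\frac2q+\varepsilon}_{q,\infty}$ is therefore the main obstacle, and it is where the low-frequency hypothesis $u_0\in H^{-2\delta}$ and the condition $q>1/\delta$ must be fully exploited: one has to trade the Besov regularity of $\rho-1$ and the time decay of $\partial_t u$ against the two derivatives gained from the heat flow, while keeping the smallness of $\|\mu(\rho)-1\|_{L^\infty}$ available to absorb any residual copy of the left-hand side. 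Once this contribution is bounded by $C_1^2\bigl(1+\|\rho\|_{L^\infty_t(B^{\frac2q+\varepsilon}_{\infty,\infty})}\bigr)$, collecting the three estimates and absorbing the small viscous term gives \eqref{a.1rf}.
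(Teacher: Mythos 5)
Your proposal stalls exactly where you say it does, and the gap is real: with only the weighted $L^2_t(L^2)$ control of $\partial_t u$ from Proposition \ref{propb.2}, the source term $(1-\r)\pa_tu$ in your forced-heat-equation formulation lands in $L^1_t(\dot B^{-1+\frac2q}_{q,\infty})$ and no better --- the paraproduct $T_{1-\r}\pa_tu$ preserves the frequency localization of $\pa_tu$, so the factor $\|\r\|_{L^\infty_t(B^{\frac2q+\e}_{\infty,\infty})}$ is of no help for that piece, and the missing $\e$ cannot be recovered from time decay. No amount of trading against the heat flow fixes this within your decomposition, because the obstruction sits at a single fixed time and a single dyadic block.

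The paper's proof avoids this by never producing the product $(1-\r)\pa_tu$ as a source. Instead, it divides the momentum equation by $\r$, applies $\mathbb{P}$ and then $\dot\D_j$, and \emph{recombines} so that the variable coefficient stays attached to the time derivative: the localized equation \eqref{a.2} has $\r\pa_t\dot\D_ju+\r u\cdot\na\dot\D_ju$ on the left (handled by an $L^q$ energy estimate on each block, using $m\le\r\le M$ and the positivity inequality from Lemma A.5 of \cite{DAN}, with the small term $2\dv\bigl((\mu(\r)-1)\mathbb{P}d(\dot\D_ju)\bigr)$ absorbed thanks to \eqref{thma.1}), while the density enters the right-hand side only through the \emph{commutator} $\r[\dot\D_j\mathbb{P};\frac1\r]\bigl(\dv(2\mu(\r)d)-\na\Pi\bigr)$. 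This is the missing idea: a commutator between $\dot\D_j\mathbb{P}$ and a paraproduct gains one derivative (Lemma 1 of \cite{Plan}), so the extra $\e$ is paid by $\|\r\|_{B^{\e}_{\infty,\infty}}$ rather than by regularity of $\pa_tu$; one then only needs $\dv(2\mu(\r)d)-\na\Pi=\r\pa_tu+\r u\cdot\na u$ in $L^1_t(L^2)$, which follows from $\|\w{t'}^{(\f12+\d)_-}\p_tu\|_{L^2_t(L^2)}\le C_1$ and \eqref{a.4a}, giving \eqref{a.5}. Your treatments of the other two pieces are close in spirit to the paper's (the commutator $[\dot\D_j\mathbb{P};\mu(\r)]d$ carries $\|\r\|_{L^\infty_t(B^{\frac2q+\e}_{\infty,\infty})}\|\na u\|_{L^1_t(L^q)}\lesssim C_1^2\|\r\|_{L^\infty_t(B^{\frac2q+\e}_{\infty,\infty})}$ as in \eqref{a.6}, and the convection term is again a commutator $[\dot\D_j\mathbb{P};u\cdot\na]u$ estimated in \eqref{a.4}), but without the commutator reformulation of the density term the lemma does not close.
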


\begin{proof} Let $\mathbb{P}\eqdefa I-\na(\D)^{-1}\dv$ be Leray
projection operator. We get, by first dividing the momentum equation
of \eqref{1.1} by $\rho$ and then applying the resulting equation by
$\mathbb{P},$ that
$$
\partial_t u+\mathbb{P}\bigl\{u\cdot\nabla u\bigr\}
-\mathbb{P}\bigl\{\frac{1}{\rho}\bigl(\dv(2\mu(\rho)d)-\nabla\Pi\bigr)\bigr\}=0.
$$
Applying $\dot\Delta_j$ to the above equation and using a standard
commutator's process yields \beq\label{a.2}
\begin{aligned}
\rho&\partial_t\dot\Delta_ju+\rho
u\cdot\nabla\dot\Delta_ju-\D\dot\D_ju-
2\dv\bigl((\mu(\rho)-1)\mathbb{P}d(\dot\Delta_ju)\bigr)\\
&=-\rho[\dot\Delta_j\mathbb{P}; u\cdot\nabla]u
+\rho[\dot\Delta_j\mathbb{P};
\frac{1}{\rho}]\bigl(\dv(2\mu(\rho)d)-\nabla\Pi\bigr)
+2\dv[\dot\Delta_j\mathbb{P}; \mu(\rho)]d.
\end{aligned}
\eeq Throughout this paper, we always denote
$d(v)\eqdefa\bigl(\f12(\p_iv_j+\p_jv_i)\bigr)_{2\times2},$ and
abbreviate $d(u)$ as $d.$

Taking $L^2$ inner product of \eqref{a.2} with
$|\dot\Delta_ju|^{q-2}\dot\Delta_ju,$ we obtain \beq\label{a.2da}
\begin{aligned}
&\frac{1}{q}\frac{d}{dt}\int_{\R^2}\rho|\dot\Delta_ju|^q\,dx-
\int_{\R^2}\D\Delta_ju\ |\ |\dot\Delta_ju|^{q-2}\dot\Delta_j u\,dx
\\&
\leq \|\dot\Delta_ju\|_{L^q}^{q-1}
\Bigl\{C(q-1)2^j\|(\mu(\rho)-1)\mathbb{P} d(\dot\Delta_ju)\|_{L^q}
+\|\rho[\dot\Delta_j\mathbb{P};u\cdot\nabla]u\|_{L^q}\\
& \quad+\|\rho[\dot\Delta_j\mathbb{P};
\frac{1}{\rho}]\bigl(\dv(2\mu(\rho)d) -\nabla\Pi\bigr)\|_{L^q}
+c(q-1)2^j\|[\dot\Delta_j\mathbb{P},\mu(\rho)](2d)\|_{L^q}\Bigr\}.
\end{aligned}
\eeq However as $\dv\, u=0,$ one gets, by using integration by parts
and  Lemma A.5 of \cite{DAN}, that \beno \begin{split} -
\int_{\R^2}\D\dot\Delta_ju\ |\ |\dot\Delta_ju|^{q-2}\dot\Delta_j
u\,dx =&\int_{\R^2}|\dot\D_j\na
u|^2|\dot\D_ju|^{q-2}\,dx\\&+(q-2)\int_{\R^2}|\dot\D_ju|^{q-2}\bigl(\na|\dot\D_ju|\bigr)^2\,dx\geq
c2^{2j}\|\dot\Delta_ju\|_{L^q}^q ,
\end{split}
\eeno for some positive constant $c.$

Whereas it follows from Lemma \ref{lem2.1} that \beno
\|(\mu(\rho)-1)\mathbb{P} d(\dot\Delta_ju)\|_{L^q}\lesssim 2^j
\|\mu(\rho)-1\|_{L^\infty}\|\dot\D_ju\|_{L^q}\lesssim 2^j
\|\mu(\rho_0)-1\|_{L^\infty}\|\dot\D_ju\|_{L^q}. \eeno Therefore
taking $\e_0$ sufficiently small in \eqref{thma.1} and using
\eqref{b.3}, we deduce from \eqref{a.2da} that \beno
\begin{aligned}
\frac{d}{dt}\|\rho^{\frac1q}\dot\Delta_ju\|_{L^q}&
+c2^{2j}\|\rho^{\frac1q}\dot\Delta_ju\|_{L^q} \lesssim
\|\rho[\dot\Delta_j\mathbb{P};
u\cdot\nabla]u\|_{L^q}\\
& \quad+\|\rho[\dot\Delta_j\mathbb{P};
\frac{1}{\rho}]\bigl(\dv(2\mu(\rho)d) -\nabla\Pi\bigr)\|_{L^q}
+2^j\|[\dot\Delta_j\mathbb{P},\mu(\rho)](2d)\|_{L^q},
\end{aligned}
\eeno  which gives rise to \beno
\begin{aligned}
\|\rho^{\frac{1}{q}}\dot\Delta_ju(t)\|_{L^q} \lesssim&
e^{-c2^{2j}t}\|\rho_0^{\frac{1}{q}}\dot\Delta_ju_0\|_{L^q}
+\int_0^te^{-c2^{2j}(t-t')}\Bigl\{\|\rho[\dot\Delta_j\mathbb{P};
u\cdot\nabla]u\|_{L^q}\\
&+\|\rho[\dot\Delta_j\mathbb{P};
\frac{1}{\rho}]\bigl(\dv(2\mu(\rho)d) -\nabla\Pi\bigr)\|_{L^q}
+2^j\|[\dot\Delta_j\mathbb{P},\mu(\rho)](2d)\|_{L^q}\Bigr\}\,dt'.
\end{aligned}
\eeno As a consequence,  thanks to \eqref{b.3} and Definition
\ref{chaleur+}, we conclude, for $q\in (2,p^\ast],$ that
\beq\label{a.3}
\begin{aligned}
\|u\|_{\widetilde L^1_t(\dot
B^{1+\frac{2}{q}+\varepsilon}_{q,\infty})} \lesssim &\|u_0\|_{\dot
B^{-1+\frac{2}{q}+\varepsilon}_{q,\infty}}
+\sup_j2^{(-1+\frac{2}{q}+\varepsilon)j}\|[\dot\Delta_j\mathbb{P};
u\cdot\nabla]u\|_{L^1_t(L^q)}
\\&
+\sup_j2^{(-1+\frac{2}{q}+\varepsilon)j}
\|[\dot\Delta_j\mathbb{P};\frac{1}{\rho}]\bigl(\dv(2\mu(\rho)d)
-\nabla\Pi\bigr)\|_{L^1_t(L^q)}\\
&+\sup_j2^{(\frac{2}{q}+\varepsilon)j}\|[\dot\Delta_j\mathbb{P};
\mu(\rho)]d\|_{L^1_t(L^q)}.
\end{aligned}
\eeq In what follows, we shall handle term by term the right-hand
side of \eqref{a.3}. Firstly appplying Bony's decomposition
\eqref{bony}, one has \beno [\dot\Delta_j\mathbb{P};
u\cdot\nabla]u=[\dot\Delta_j\mathbb{P};
T_u\cdot\nabla]u+\dot\D_j\mathbb{P}R(u,\na u)-R(u,\na\dot\D_ju).
\eeno Applying Lemma 1 of \cite{Plan} gives \beno \begin{split}
2^{(-1+\frac{2}{q}+\varepsilon)j} \|[\dot\Delta_j\mathbb{P};
T_u\cdot\nabla]u\|_{L^q} \lesssim&
2^{(-1+\frac{2}{q}+\varepsilon)j}\sum_{|j-\ell|\leq 4} \|\na \dot
S_{\ell-1}u\|_{L^\infty}\|\dot\D_\ell u\|_{L^q}\\
 \lesssim& \|\nabla
u\|_{L^q} \|u\|_{\dot H^{\frac{2}{q}+\varepsilon}} \lesssim \|\nabla
u\|_{L^q} \|\nabla u\|_{L^2}^{\frac{2}{q}+\varepsilon}
\|u\|_{L^2}^{1-\frac{2}{q}-\varepsilon}.
\end{split}
\eeno Whereas applying Lemma \ref{lem2.1}, one has
\beno\begin{split} 2^{(-1+\frac{2}{q}+\varepsilon)j}
\|\dot\D_j\mathbb{P}R(u,\na u)\|_{L^q} \lesssim
2^{(\frac{2}{q}+\varepsilon)j}\sum_{\ell\geq j-3}\|\dot\D_\ell
u\|_{L^2}\|\dot S_{\ell+2}\na u\|_{L^q} \lesssim \|u\|_{\dot
H^{\frac{2}{q}+\varepsilon}}\|\na u\|_{L^q}.
\end{split} \eeno
The same estimate holds for $R(u,\na\dot\D_ju).$ which together with
\eqref{thma.2} and  Proposition \ref{propb.3} implies
\beq\label{a.4}
\begin{split}
\sup_j2^{(-1+\frac{2}{q}+\varepsilon)j}&
\|[\dot\Delta_j\mathbb{P},u\cdot\nabla]u\|_{L^1_t(L^q)} \leq C
\|u\|_{L^\infty_t(L^2)}^{1-\frac{2}{q}-\varepsilon} \|\nabla
u\|_{L^\infty_t(L^2)}^{\frac{2}{q}+\varepsilon}\|\nabla
u\|_{L^1_t(L^q)} \\
&\leq C\bigl(\|u_0\|_{L^2}+\|\na
u_0\|_{L^2}\exp\bigl(C\|u_0\|_{L^2}^4\bigr)\bigr)\|\w{t'}^{(\f12+\d-\f1q)_-}\na
u\|_{L^2_t(L^q)} \leq CC_1^{2},
\end{split}
 \eeq
where in the last step, we used the assumption that $q\d>1$ so that
$\|\w{t'}^{-(\f12+\d-\f1q)_-}\|_{L^2_t}\leq C.$

Exactly along the same line to the proof of \eqref{a.4}, we  get, by
applying Bony's decomposition \eqref{bony}, that \beno
[\dot\Delta_j\mathbb{P},\frac{1}{\rho}]f
=[\dot\Delta_j\mathbb{P},T_{\frac{1}{\rho}}]f
+\dot\D_j\mathbb{P}R(\frac1\r,f)
-R(\frac1\r,\dot\Delta_j\mathbb{P}f). \eeno It follows from Lemma 1
of \cite{Plan}  that
$$
\begin{aligned}
2^{(-1+\frac{2}{q}+\varepsilon)j}
\|[\dot\Delta_j\mathbb{P},T_{\frac{1}{\rho}}]f\|_{L^q} &\lesssim
2^{j\varepsilon}
\|[\dot\Delta_j\mathbb{P},T_{\frac{1}{\rho}}]f\|_{L^2}
\\&\lesssim
\|\na\frac1\r\|_{\dot B^{-1+\varepsilon}_{\infty,\infty}}\|f\|_{L^2}
\lesssim (1+\|\r\|_{L^\infty}) \|\r\|_{
B^{\varepsilon}_{\infty,\infty}}\|f\|_{L^2},
\end{aligned}
$$
and applying Lemma \ref{lem2.1}  leads to  \beno
\begin{split}
2^{(-1+\frac{2}{q}+\varepsilon)j}
\|\dot\D_j\mathbb{P}R(\frac1\r,f)\|_{L^q} &\lesssim2^{j\varepsilon}
\|\dot\D_j\mathbb{P}R(\frac1\r,f)\|_{L^2}
\\&\lesssim2^{j\varepsilon}\sum_{\ell\geq j-3}
\|\dot\D_\ell(\frac1\r)\|_{L^\infty}\|\dot S_{\ell+2}f\|_{L^2}
\lesssim  (1+\|\r\|_{L^\infty})
\|\r\|_{B^{\varepsilon}_{\infty,\infty}}\|f\|_{L^2}.
\end{split}
\eeno The same estimate holds for
$R(\frac1\r,\dot\Delta_j\mathbb{P}f),$ so we obtain \beno
2^{(-1+\frac{2}{q}+\varepsilon)j}\bigl\|[\dot\Delta_j\mathbb{P},\frac{1}{\rho}]f\bigr\|_{L^1_t(L^q)}\leq
C\|\r\|_{L^\infty_t(
B^{\varepsilon}_{\infty,\infty})}\|f\|_{L^1_t(L^2)}, \eeno from
which  and
$\dv(2\mu(\rho)d)-\nabla\Pi=\r\partial_tu+\r(u\cdot\nabla)u$, we
deduce that \beno
\begin{split}
\sup_j2^{(-1+\frac{2}{q}+\varepsilon)j}&
\|[\dot\Delta_j\mathbb{P},\frac{1}{\rho}]\bigl(\dv(2\mu(\rho)d)
-\nabla\Pi\bigr)\|_{L^1_t(L^q)} \\
&\leq C \|\r\|_{L^\infty_t( B^{\varepsilon}_{\infty,\infty})}
\|(\dv(2\mu(\rho)d)-\nabla\Pi)\|_{L^1_t(L^2)}
\\&
\leq C
\|\rho\|_{L^\infty_t(B^{\varepsilon}_{\infty,\infty})}\bigl(\|\p_tu\|_{L^1_t(L^2)}+\int_0^t\|u\|_{L^4}\|\na
u\|_{L^4}\,dt'\bigr).
\end{split}
\eeno However, notice from \eqref{b.10} and Proposition
\ref{propb.2} that \beq\label{a.4a}
\begin{split}
\int_0^t\|u\|_{L^4}\|\na u\|_{L^4}\,dt'\leq
&C\int_0^t\bigl(\|u\|_{L^2}^{\f12}\|\na
u\|_{L^2}\|u_t\|_{L^2}^{\f12}+\|u\|_{L^2}\|\na
u\|_{L^2}^2\bigr)\,dt'\\
\lesssim & \|u\|_{L^\infty_t(L^2)}^{\f12}\|\na
u\|_{L^2_t(L^2)}\|\langle t'\rangle^{\bigl(\f{1}2+\d)_-}
u_t\|_{L^2_t(L^2)}^{\f12}\|\langle t'\rangle^{-(\f{1}4+\f{\d}2)_-}
\|_{L^4_t}\\
&+\|u\|_{L^\infty_t(L^2)}\|\na u\|_{L^2_t(L^2)}^2\leq
C\bigl(C_1^{\f12}\|u_0\|_{L^2}^{\f32}+\|u_0\|_{L^2}^3\bigr),
\end{split}
\eeq and \beno \|\p_tu\|_{L^1_t(L^2)}\leq
C\|\w{t'}^{(\f12+\d)_-}\p_tu\|_{L^2_t(L^2)}\leq C_1, \eeno so that
we obtain \beq\label{a.5} \sup_j2^{(-1+\frac{2}{q}+\varepsilon)j}
\|[\dot\Delta_j\mathbb{P},\frac{1}{\rho}]\bigl(\dv(2\mu(\rho)d)
-\nabla\Pi\bigr)\|_{L^1_t(L^q)} \leq CC_1
\|\rho\|_{L^\infty_t(B^{\varepsilon}_{\infty,\infty})}. \eeq
 As $\f2q+\e<1,$ the same
process also ensures \beq\label{a.6}
\begin{split}
\sup_j2^{(\frac{2}{q}+\varepsilon)j}&
\|[\dot\Delta_j\mathbb{P},\mu(\rho)](2d)\|_{L^1_t(L^q)} \leq
C\|\rho\|_{L^\infty_t( B^{\frac{2}{q}+\varepsilon}_{\infty,\infty})}
\|\nabla u\|_{L^1_t(L^q)}\\
&\qquad\leq C\|\rho\|_{L^\infty_t(
B^{\frac{2}{q}+\varepsilon}_{\infty,\infty})}\|\w{t'}^{(\f12+\d-\f1q)_-}\na
u\|_{L^2_t(L^q)}
 \leq C
C_1^{2} \|\rho\|_{L^\infty_t(
B^{\frac{2}{q}+\varepsilon}_{\infty,\infty})}.
\end{split}
\eeq Substituting \eqref{a.4}, \eqref{a.5} and \eqref{a.6} into
\eqref{a.3} results in \eqref{a.1rf}, and we complete the proof of
Lemma \ref{lema.1.1}.
\end{proof}

With Lemma \ref{lema.1.1}, we can prove the {\it a priori} $
L^1(\R^+;\,\dot B^1_{\infty,1}(\R^2))$ estimate for $u.$

\begin{prop}\label{propa.lip}
{\sl Under the assumptions of Lemma \ref{lema.1.1}, there exists a
positive constant $C$ which depends on $m, M$ and $
\|\mu'\|_{L^{\infty}}$ such that if \beq\label{a.7}
4C^2C_1^2\bigl(1+\|\r_0\|_{B^{\f2q+\e}_{\infty,\infty}}\bigr)\|\mu(\r_0)-1\|_{L^\infty}\leq
1, \eeq  for $C_1$ given by \eqref{p.7a}, one has \beq\label{a.8}
\|u\|_{L^1_t(\dot B^{1}_{\infty,1})} \le
2CC_1^2\bigl(1+\|\r_0\|_{B^{\f2q+\e}_{\infty,\infty}}\bigr). \eeq }
\end{prop}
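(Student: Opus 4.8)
The plan is to derive \eqref{a.8} from the higher-order bound \eqref{a.1rf} of Lemma \ref{lema.1.1} in two stages: first I would convert the $\widetilde L^1_t(\dot B^{1+\f2q+\e}_{q,\infty})$ control of $u$ into the Lipschitz control $\|u\|_{L^1_t(\dot B^1_{\infty,1})}$, and then I would replace the running density norm $\|\r\|_{L^\infty_t(B^{\f2q+\e}_{\infty,\infty})}$ sitting on the right of \eqref{a.1rf} by its initial value through a continuity argument powered by the smallness \eqref{a.7}.

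For the first stage, write $\|u\|_{L^1_t(\dot B^1_{\infty,1})}=\sum_j 2^j\|\dot\Delta_j u\|_{L^1_t(L^\infty)}$ and apply the Bernstein inequality $\|\dot\Delta_j u\|_{L^\infty}\lesssim 2^{\f2q j}\|\dot\Delta_j u\|_{L^q}$ to bound it by $\sum_j 2^{(1+\f2q)j}\|\dot\Delta_j u\|_{L^1_t(L^q)}$. I would then split the sum at some frequency $2^N$: for $j\ge N$ I factor out $2^{-\e j}$ and recognize $\sup_j 2^{(1+\f2q+\e)j}\|\dot\Delta_j u\|_{L^1_t(L^q)}=\|u\|_{\widetilde L^1_t(\dot B^{1+\f2q+\e}_{q,\infty})}$, controlled by the right-hand side of \eqref{a.1rf}; for $j<N$ I use $2^j\|\dot\Delta_j u\|_{L^q}\lesssim\|\dot\Delta_j\na u\|_{L^q}\lesssim\|\na u\|_{L^q}$ and sum the convergent geometric factor $2^{\f2q j}$. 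The low-frequency piece costs $\|\na u\|_{L^1_t(L^q)}$, which is finite: since $q\d>1$, Proposition \ref{propb.3} together with Cauchy--Schwarz in time (the weight $\w{t'}^{-(\f12+\d-\f1q)_-}$ lies in $L^2_t$) gives $\|\na u\|_{L^1_t(L^q)}\le CC_1^2$. Choosing $N$ appropriately yields $\|u\|_{L^1_t(\dot B^1_{\infty,1})}\le CC_1^2\bigl(1+\|\r\|_{L^\infty_t(B^{\f2q+\e}_{\infty,\infty})}\bigr)$. The separate low-frequency estimate is genuinely needed, since the bound \eqref{a.1rf} alone degenerates as $j\to-\infty$.

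For the second stage, the density solves $\pa_t\r+u\cdot\na\r=0$ with $\dv u=0$; the same holds for $\mu(\r)-1$, whose $L^\infty$ norm is moreover conserved and equal to $\|\mu(\r_0)-1\|_{L^\infty}$. Its $B^{\f2q+\e}_{\infty,\infty}$ norm (with $0<\f2q+\e<1$) is propagated along the flow of $u$, the growth being governed by $\|\na u\|_{L^1_t(L^\infty)}\le\|u\|_{L^1_t(\dot B^1_{\infty,1})}$. I would run a continuity argument: let $T^{\ast\ast}\le T^\ast$ be the largest time on which $\|u\|_{L^1_t(\dot B^1_{\infty,1})}\le 2CC_1^2(1+\|\r_0\|_{B^{\f2q+\e}_{\infty,\infty}})$ holds. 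On $[0,T^{\ast\ast}]$ the transport of $\r$ controls $\|\r\|_{L^\infty_t(B^{\f2q+\e}_{\infty,\infty})}$ in terms of $\|\r_0\|_{B^{\f2q+\e}_{\infty,\infty}}$ and this Lipschitz norm, and the threshold \eqref{a.7} — which measures $\|\mu(\r_0)-1\|_{L^\infty}$ against $C^2C_1^2(1+\|\r_0\|_{B^{\f2q+\e}_{\infty,\infty}})$ — is calibrated precisely so that the propagated norm does not exceed $1+\|\r_0\|_{B^{\f2q+\e}_{\infty,\infty}}$. Feeding this back into the first-stage bound gives $\|u\|_{L^1_t(\dot B^1_{\infty,1})}\le CC_1^2\bigl(2+\|\r_0\|_{B^{\f2q+\e}_{\infty,\infty}}\bigr)$, which is strictly below $2CC_1^2(1+\|\r_0\|_{B^{\f2q+\e}_{\infty,\infty}})$; this forces $T^{\ast\ast}=T^\ast$ and yields \eqref{a.8}.

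The hard part is exactly this coupling: the velocity Lipschitz estimate feeds the density regularity through the transport equation, which then re-enters \eqref{a.1rf}, so the bound is self-referential and can close only under a smallness hypothesis; the whole purpose of \eqref{a.7} is to render this feedback contractive with the clean constant $2$. I expect the genuinely delicate point to be the sharp transport/commutator estimate propagating the $B^{\f2q+\e}_{\infty,\infty}$ norm of $\r$ (and of $\mu(\r)-1$) in the subcritical range $0<\f2q+\e<1$ — in particular, extracting from it the dependence on the conserved quantity $\|\mu(\r_0)-1\|_{L^\infty}$ that is what actually tames the otherwise exponential growth in $\|\na u\|_{L^1_t(L^\infty)}$ and matches the exact form of \eqref{a.7} — together with checking that every constant involved depends only on $m,M$ and $\|\mu'\|_{L^\infty}$.
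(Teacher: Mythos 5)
Your stage 1 is sound: splitting at frequency $2^N$ and balancing $2^{-N\e}\|u\|_{\wt{L}^1_t(\dot B^{1+\f2q+\e}_{q,\infty})}$ against $\|\na u\|_{L^1_t(L^q)}\le CC_1^2$ does give $\|u\|_{L^1_t(\dot B^1_{\infty,1})}\le CC_1^2\bigl(1+\|\r\|_{L^\infty_t(B^{\f2q+\e}_{\infty,\infty})}\bigr)$. The genuine gap is in stage 2. The transport propagation is exponential, $\|\r\|_{L^\infty_t(B^{\f2q+\e}_{\infty,\infty})}\le\|\r_0\|_{B^{\f2q+\e}_{\infty,\infty}}\exp\bigl(C\|\na u\|_{L^1_t(L^\infty)}\bigr)$, and in your chain the density norm enters \eqref{a.1rf} with coefficient $CC_1^2$, which carries \emph{no} factor of $\|\mu(\r_0)-1\|_{L^\infty}$. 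So with $X\eqdefa\|u\|_{L^1_t(\dot B^1_{\infty,1})}$ your self-referential bound reads $X\le CC_1^2\bigl(1+\|\r_0\|_{B^{\f2q+\e}_{\infty,\infty}}e^{CX}\bigr)$, and the hypothesis \eqref{a.7} never appears in it: there is no mechanism in your argument by which the $L^\infty$-smallness of $\mu(\r_0)-1$ tames the transport growth of $\r$ (you flagged exactly this as the delicate point, and it is where the route fails). At the target level $X\le 2CC_1^2(1+\|\r_0\|_{B^{\f2q+\e}_{\infty,\infty}})$ the right side is of size $CC_1^2\|\r_0\|_{B^{\f2q+\e}_{\infty,\infty}}\exp\bigl(4C^2C_1^2(1+\|\r_0\|_{B^{\f2q+\e}_{\infty,\infty}})\bigr)$, which overshoots the target for large data no matter how small $\|\mu(\r_0)-1\|_{L^\infty}$ is; closing an exponential loop of this kind would require a smallness condition exponential in the data, i.e.\ a condition of the type \eqref{thmassume} that the theorem is precisely designed to improve.

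The paper avoids this by never feeding the exponential directly back into the Lipschitz bound. It returns to the decomposition \eqref{b.5af}, writing $\na u$ as Riesz transforms acting on $2(\mu(\r)-1)d$ and on $\mathbb{P}\dv(2\mu(\r)d)=\mathbb{P}(\r\p_tu+\r u\cdot\na u)$, and performs Bony's decomposition on $(\mu(\r)-1)d$: the paraproduct $T_{\mu(\r)-1}d$ is \emph{linear} in $X$ with the small coefficient $\|\mu(\r_0)-1\|_{L^\infty}$; the middle frequencies $0\le\ell\le N$ cost $N\|\mu(\r_0)-1\|_{L^\infty}X$; and the exponential from the transported norms of $\r$ appears only at frequencies $\ge 2^N$ with prefactor $2^{-N\e}$ — this high-frequency block is the sole place Lemma \ref{lema.1.1} is used, and the reason it was proved with the extra $\e$ of regularity. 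Choosing $N=\bigl[\f{C}{\e\ln2}\|\na u\|_{L^1_t(L^\infty)}\bigr]$ makes $2^{-N\e}\exp\bigl(C\|\na u\|_{L^1_t(L^\infty)}\bigr)\le1$, converting the exponential feedback into a quadratic one: the closed inequality is $X\le A+BX^2$ with $A=CC_1^2\bigl(1+\|\r_0\|_{B^{\f2q+\e}_{\infty,\infty}}\bigr)$ and $B=C\|\mu(\r_0)-1\|_{L^\infty}$ (the analogous treatment of the $\mu(\r)d$ term produces only $C_1\sqrt{L}$ plus $\f12\|\na u\|_{L^1_t(L^\infty)}$, absorbed on the left). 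Condition \eqref{a.7} is exactly $4AB\le1$, which yields $X\le2A$, i.e.\ \eqref{a.8}. In short: the smallness must be injected \emph{inside} the paraproduct structure of the viscous term before the bootstrap, not after the exponential has already formed.
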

\begin{proof}
 Bony's decomposition \eqref{bony} for $(\mu(\rho)-1)d$ reads \beno
(\mu(\rho)-1)d=T_{\mu(\rho)-1}d+T_d(\mu(\rho)-1)+\cR(\mu(\rho)-1,d).
\eeno Applying para-product estimates (\cite{BCD}) gives
\beq\label{a.9}\begin{split} \|T_{\mu(\rho)-1}d\|_{L^1_t(\dot
B^0_{\infty,1})} \lesssim& \|\mu(\r)-1\|_{L^\infty_t(L^\infty)}
\|u\|_{L^1_t(\dot B^1_{\infty,1})}\\
 \lesssim&
\|\mu(\r_0)-1\|_{L^\infty} \|u\|_{L^1_t(\dot B^1_{\infty,1})}.
\end{split} \eeq
To deal with $\cR(\mu(\rho)-1,d),$ for any integer $N,$ we decompose
it as
$$
\begin{aligned}
\|\cR(\mu(\rho)-1,d)\|_{L^1_t(\dot B^0_{\infty,1})} &\lesssim
\sum_{\ell\leq0}\|\dot\D_\ell(\cR(\mu(\rho)-1,d))\|_{L^1_t(L^\infty)}
\\&
+\sum_{0\leq\ell\leq
N}\|\dot\D_\ell(\cR(\mu(\rho)-1,d))\|_{L^1_t(L^\infty)}
\\&
+\sum_{N\leq\ell}\|\dot\D_\ell(\cR(\mu(\rho)-1,d))\|_{L^1_t(L^\infty)}
\eqdefa \mbox{I}+\mbox{II}+\mbox{III}.
\end{aligned}
$$
Let $q$ be as in Lemma \ref{lema.1.1} and $\bar{q}=\f{2q}{2+q}.$
Then by virtue of Lemma \ref{lem2.1} and para-product estimates
(\cite{BCD}), we have
$$
\begin{aligned}
\mbox{I} \lesssim& \|\cR(\mu(\rho)-1,d)\|_{L^1_t(\dot
B^0_{\bar{q},\infty})} \\
\lesssim& \|(\mu(\rho)-1)d\|_{L^1_t(L^{\bar{q}})}
+\|T_{\mu(\rho)-1}d\|_{L^1_t(\dot B^0_{\bar{q},\infty})}
+\|T_d(\mu(\rho)-1)\|_{L^1_t(\dot B^0_{\bar{q},\infty})}
\\
\lesssim&\|\mu(\r)-1\|_{L^\infty_t(L^{2})}\|\na
u\|_{L^1_t(L^q)}\lesssim C_1^2\|\r_0-1\|_{L^2},
\end{aligned}
$$
where in the last step, we used \eqref{p.8}. Along the same line,
one has
$$
\begin{aligned}
\mbox{II} \lesssim N\|\cR(\mu(\rho)-1,d)\|_{L^1_t(\dot
B^0_{\infty,\infty})} \lesssim
N\|\mu(\r_0)-1\|_{L^\infty}\|u\|_{L^1_t(\dot B^1_{\infty,1})},
\end{aligned}
$$
and
$$
\begin{aligned}
\mbox{III} \lesssim& \sum_{\ell\geq N}\sum_{j\geq
\ell-N_0}\|\dD_j(\mu(\r)-1)\|_{L^\infty_t(L^\infty)}\|\dD_j(\na
u)\|_{L^1_t(L^\infty)}\\
\lesssim &\sum_{\ell\geq
N}2^{-\ell\varepsilon}\|\mu(\r)-1\|_{L^\infty_t(L^\infty)}
\|u\|_{\wt{L}^1_t(\dot
B^{1+\varepsilon}_{\infty,\infty})}\\
\lesssim &2^{-N\varepsilon}\|\mu(\r_0)-1\|_{L^\infty}
\|u\|_{\wt{L}^1_t(\dot B^{1+\f2q+\varepsilon}_{q,\infty})}
\end{aligned}
$$
for any $q\in (2, p^\ast)$ with $q\e>1.$  Hence we obtain
\beq\label{a.10}
\begin{aligned}
\|\cR(\mu(\rho)-1,d)\|_{L^1_t(\dot B^0_{\infty,1})} \leq& CC_1^2+ C
\|\mu(\r_0)-1\|_{L^\infty}\\
&\quad\times \Bigl(N\|u\|_{L^1_t(\dot
B^1_{\infty,1})}+2^{-N\varepsilon} \|u\|_{L^1_t(\dot
B^{1+\frac{2}{q}+\varepsilon}_{q,\infty})}\Bigr).
\end{aligned}
\eeq The same process leads to \beq\label{a.11}
\begin{aligned}
\|T_d(\mu(\rho)-1)\|_{L^1_t(\dot B^0_{\infty,1})} &\leq CC_1^2+
C\Bigl(\|\mu(\r_0)-1\|_{L^\infty}\|\nabla u\|_{L^1_t(L^\infty)}N
\\&\qquad\qquad
+2^{-N\varepsilon} \|\mu(\r)-1\|_{L^\infty_t(\dot
B^{\varepsilon}_{\infty,\infty})} \|\nabla
u\|_{L^1_t(L^\infty)}\Bigr).
\end{aligned}
\eeq Notice that \beno \begin{split}
&\|u_0\|_{\dot{B}^{-1+\f2q+\e}_{q,\infty}}\lesssim
\|u_0\|_{\dot{B}^{\e}_{2,\infty}}\lesssim
\|u_0\|_{H^1},\\
&\|\r\|_{L^\infty_t(B^{\f2q+\e}_{\infty,\infty})}\leq
\|\r_0\|_{B^{\f2q+\e}_{\infty,\infty}}\exp\bigl(C\|\nabla
u\|_{L^1_t(L^\infty)}\bigr),\\
&\|\mu(\r)-1\|_{L^\infty_t(B^\e_{\infty,\infty})}\leq
\|\mu(\r_0)-1\|_{B^\e_{\infty,\infty}}\exp\bigl(C\|\nabla
u\|_{L^1_t(L^\infty)}\bigr), \end{split} \eeno and  Riesz transform
maps continuously from $\dot B^0_{\infty,1}$ from $\dot
B^0_{\infty,1}$ with uniform bound, we get,
 by summing up \eqref{a.9}, \eqref{a.10}, \eqref{a.11} and  Lemma
\ref{lema.1.1}, that \beno\begin{split}
\|\mathcal{R}\mathbb{P}\mathcal{R}
\cdot\bigl(2(\mu(\rho)-1)d\bigr)\|_{L^1_t(\dot B^0_{\infty,1})} \leq
& CC_1^2+
C\Bigl(\|\mu(\r_0)-1\|_{L^\infty}\|u\|_{L^1_t(\dot{B}^1_{\infty,1})}N\\
&+C_1^22^{-N\e}\bigl(1+\|\r_0\|_{B^{\f2q+\e}_{\infty,\infty}}\exp\bigl(C\|\nabla
u\|_{L^1_t(L^\infty)}\bigr)\bigr)\\
&+2^{-N\varepsilon} \|\mu(\r_0)-1\|_{
B^{\varepsilon}_{\infty,\infty}}\exp\bigl(C\|\nabla
u\|_{L^1_t(L^\infty)}\bigr) \Bigr).
\end{split}
\eeno Let $[Y]$ be the integer part of $Y.$ Then choosing
$N=\Bigl[\f{C}{\e\ln 2}\|\nabla u\|_{L^1_t(L^\infty)}\Bigr]$ so that
\beno C2^{-N\varepsilon} \exp\bigl(C\|\nabla
u\|_{L^1_t(L^\infty)}\bigr) \leq 1 \eeno in the above inequality
results in \beq\label{a.12}\begin{split}
\|\mathcal{R}\mathbb{P}\mathcal{R}
\cdot\bigl(2(\mu(\rho)-1)d\bigr)\|_{L^1_t(\dot B^0_{\infty,1})}
\leq&CC_1^2\bigl(1+\|\r_0\|_{B^{\f2q+\e}_{\infty,\infty}}\bigr)+C\|\mu(\r_0)-1\|_{L^\infty}\|u\|_{L^1_t(\dot{B}^1_{\infty,1})}^2.
\end{split}
\eeq

To handle
$\mathcal{R}(-\Delta)^{-\frac{1}{2}}\mathbb{P}\dv\bigl(2\mu(\rho)d\bigr),$
for any integer $L,$ we get, by applying Lemma \ref{lem2.1}, that
$$
\begin{aligned}
\|\mathcal{R}&(-\Delta)^{-\frac{1}{2}}\mathbb{P}\dv
\bigl(2\mu(\rho)d\bigr)\|_{L^1_t(\dot B^0_{\infty,1})}\\
 &\lesssim
\sum_{\ell\leq0}2^{\f{2\ell}q}\|\mu(\rho)d\|_{L^1_t(L^q)}
+\sum_{0\leq\ell\leq
L}\|\dot\D_\ell(\r\pa_tu+\r(u\cdot\nabla)u)\|_{L^1_t(L^2)}
\\&\qquad
+\sum_{L\leq\ell}2^{-\ell\varepsilon}
\|\mu(\rho)d\|_{\wt{L}^1_t(\dot B^\varepsilon_{\infty,\infty})}
\\&
\lesssim \|\na u\|_{L^1_t(L^q)}+(\|\p_tu\|_{L^1_t(L^2)}+\|u\cdot\na
u\|_{L^1_t(L^2)})\sqrt{L}\\
&\qquad+2^{-L\e}\bigl(\|\mu(\rho)\|_{L^\infty_t(L^\infty)}\|u\|_{\wt{L}^1_t(\dot
B^{1+\f2q+\varepsilon}_{q,\infty})}+\|\mu(\rho)\|_{\wt{L}^\infty_t(\dot
B^\e_{\infty,\infty})}\|\na u\|_{L^1_t(L^\infty)}\bigr),
\end{aligned}
$$
from which, Lemma \ref{lema.1.1}, \eqref{a.4a} and Proposition
\ref{propb.3}, we infer
$$
\begin{aligned}
\|\mathcal{R}(-\Delta&)^{-\frac{1}{2}}\mathbb{P}\dv
\bigl(2\mu(\rho)d\bigr)\|_{L^1_t(\dot B^0_{\infty,1})}\\
 \leq&
 C\Bigl\{C_1^2+C_1\sqrt{L}+C_1^22^{-L\e}\bigl(1+\|\r_0\|_{B^{\f2q+\e}_{\infty,\infty}}\bigr)
 \exp\bigl(C\|\na u\|_{L^1_t(L^\infty)}\bigr)\Bigr\}.
\end{aligned}
$$
Taking $L=\Bigl[\f{C}{\e\ln 2}\|\na u\|_{L^1_t(L^\infty)}\Bigr]$ in
the above inequality results in \beq\label{a.13}
\begin{aligned}
&\|\mathcal{R}_i(-\Delta)^{-\frac{1}{2}}\mathbb{P}_j\dv
\bigl(2\mu(\rho)d\bigr)\|_{L^1_t(\dot B^0_{\infty,1})}\leq
CC_1^2\bigl(1+\|\r_0\|_{B^{\f2q+\e}_{\infty,\infty}}\bigr)
+\f12\|\na u\|_{L^1_t(L^\infty)}.
\end{aligned}
\eeq Thus thanks to \eqref{b.5af}, we get, by combining \eqref{a.12}
with \eqref{a.13}, that \beno \begin{split} \|u\|_{L^1_t(\dot
B^1_{\infty,1})} \leq &
CC_1^2\bigl(1+\|\r_0\|_{B^{\f2q+\e}_{\infty,\infty}}\bigr)+C\|\mu(\r_0)-1\|_{L^\infty}\|u\|_{L^1_t(\dot
B^1_{\infty,1})}^2+\f12\|\na u\|_{L^1_t(L^\infty)},
\end{split}
\eeno which ensures that
$$
\|u\|_{L^1_t(\dot B^1_{\infty,1})} \le
CC_1^2\bigl(1+\|\r_0\|_{B^{\f2q+\e}_{\infty,\infty}}\bigr)+C\|\mu(\r_0)-1\|_{L^\infty}\|u\|_{L^1_t(\dot
B^1_{\infty,1})}^2,
$$
from which and \eqref{a.7}, we conclude \eqref{a.8}. This completes
the proof of Proposition \ref{propa.lip}.
\end{proof}

\renewcommand{\theequation}{\thesection.\arabic{equation}}
\setcounter{equation}{0}

\section{The blow-up criterion of \eqref{1.1}}

The purpose of this section is to prove a blow-up criterion for
smooth enough solutions of \eqref{1.1}. As a matter of fact, we
shall prove a more general result concerning the propagation of
regularities for \eqref{1.1} which does not require any smallness
assumption on the fluctuation of the viscous coefficient. Toward
this, let $a\eqdefa \f1\r-1$ and $\widetilde\mu(a)\eqdefa
\mu(\frac{1}{1+a}),$ we write \eqref{1.1} as: \beq\label{INSa}
\left\{\begin{array}{l}
\displaystyle\partial_t a+(u\cdot\nabla)a=0\\
\displaystyle\partial_t  u +u\cdot\nabla u+(1+a)
\big\{\nabla\Pi-\dv\big(\widetilde\mu(a)2d\big)\big\}=0\\
\displaystyle\dv\, u=0\\
\displaystyle(a,u)_{|t=0}=(a_0,u_0).
\end{array}
\right. \eeq

The main result can be listed as follows, which is a similar version
of blow-up criterion for hyperbolic systems (\cite{majda}).

\begin{thm}\label{est.app}
{\sl Let $s>1$ and $a_0\in H^{1+s}(\R^2)$ satisfy \beq
0<\frak{m}\leq 1+a_0\leq \frak{M}.\eeq  Let $u_0\in H^s(\R^2)$ be a
solenoidal vector field.  Then there exists a positive time
$T^\ast,$ so that \eqref{INSa} has a unique solution $(a,u)$ with
$a\in\cC([0,T]; H^{1+s}(\R^2)),$  $u\in \cC([0,T];
H^s(\R^2))\cap\wt{L}^1_T(H^{s+2})$ for any $T<T^\ast.$ Moreover, if
$T^\ast$ is the maximum time of existence and $T^\ast<\infty,$ there
holds \beq \int_0^{T^\ast}\|\na u\|_{L^\infty}\,dt'=\infty. \eeq }
\end{thm}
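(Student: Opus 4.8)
The plan is to prove both the existence of a unique local-in-time solution and the blow-up criterion, treating the system \eqref{INSa} as a quasi-linear parabolic problem with the transport equation for $a$ coupled to the parabolic equation for $u$. First I would establish local existence by a standard approximation/fixed-point scheme: smoothing the initial data, solving the linearized problem at each step, deriving uniform \emph{a priori} estimates in the energy space $a\in \cC([0,T];H^{1+s})$, $u\in \cC([0,T];H^s)\cap\wt{L}^1_T(H^{s+2})$, and passing to the limit. The transport equation $\partial_t a+u\cdot\nabla a=0$ preserves the bounds $\frak{m}\leq 1+a\leq\frak{M}$ along the flow (provided $u$ is Lipschitz), and the $H^{1+s}$ norm of $a$ is propagated by the transport estimate in Sobolev (or Besov) spaces, which costs a factor $\exp\bigl(C\int_0^t\|\nabla u\|_{L^\infty}\,dt'\bigr)$.

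The heart of the matter is the energy estimate for $u$. I would apply $\dot\Delta_j$ (or work directly in $H^s$) to the momentum equation and take the $L^2$ inner product with the appropriate weighted velocity, exactly in the spirit of the commutator argument already carried out in Lemma \ref{lema.1.1}. The parabolic term $(1+a)\dv(\wt\mu(a)2d)$ furnishes the dissipation $\|u\|_{\wt{L}^1_T(H^{s+2})}$ after using that $1+a$ and $\wt\mu(a)$ are bounded above and below; the coupling terms $u\cdot\nabla u$ and $(1+a)\nabla\Pi$ are controlled by product and commutator estimates (Bony's decomposition, the para-product estimates from \cite{BCD}, and the Riesz-transform bound on homogeneous Besov spaces noted in Remark \ref{rmk1.1}). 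The pressure is recovered via the elliptic equation obtained by taking the divergence of the momentum equation, giving $\nabla\Pi$ in terms of $a$ and $u$, and its contribution is estimated in the same functional framework. All of these estimates close on a short time interval, and uniqueness follows from a stability estimate in a lower-regularity norm (typically $L^2\times H^{-1}$ or $\dot B^0_{2,1}$-type spaces) where the differences of two solutions satisfy a linear inequality amenable to Gronwall.

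For the \textbf{blow-up criterion} itself, the strategy is the contrapositive: I would assume $T^\ast<\infty$ but $\int_0^{T^\ast}\|\nabla u\|_{L^\infty}\,dt'<\infty$, and show this forces the solution norms to stay bounded up to $T^\ast$, contradicting maximality. Concretely, with the Lipschitz bound on $u$ finite, the transport estimate keeps $\|a\|_{L^\infty_T(H^{1+s})}$ bounded and preserves the density bounds $\frak m,\frak M$. Feeding this back into the energy estimate for $u$, every dangerous term is controlled by $\|\nabla u\|_{L^\infty}$ times the high norm, so a Gronwall argument yields $\|u\|_{L^\infty_T(H^s)}+\|u\|_{\wt L^1_T(H^{s+2})}\leq C\exp\bigl(C\int_0^{T^\ast}\|\nabla u\|_{L^\infty}\,dt'\bigr)<\infty$. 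One can then re-solve the system starting from time near $T^\ast$ with these bounded data and extend the solution past $T^\ast$, the desired contradiction.

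The step I expect to be the main obstacle is obtaining a clean high-regularity \emph{a priori} estimate for $u$ in which the right-hand side depends on the high norm \emph{only through the factor} $\|\nabla u\|_{L^\infty}$ (together with already-controlled quantities), rather than through a stronger norm such as $\|\nabla u\|_{H^{s}}$ or $\|u\|_{H^{s+1}}$; otherwise the Gronwall argument does not close and the criterion would require a stronger blow-up quantity. This forces careful use of commutator estimates to peel off one derivative from the nonlinearities and of logarithmic interpolation inequalities (of Beale--Kato--Majda type) to bound the transport commutators by $\|\nabla u\|_{L^\infty}$ times lower-order factors. A secondary technical difficulty is controlling the variable-coefficient elliptic operator $\dv(\wt\mu(a)2d)$ at the top order, since $\wt\mu(a)$ is only as regular as $a$; this requires commuting $\dot\Delta_j$ past $\wt\mu(a)$ and absorbing the resulting commutator into the dissipation, exactly as in the proof of Lemma \ref{lema.1.1}.
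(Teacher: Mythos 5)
Your proposal follows the paper's proof in all essentials: local existence quoted from standard theory, propagation of $\|a\|_{H^{1+s}}$ by a transport/commutator estimate costing $\exp\bigl(C\int_0^t\|\na u\|_{L^\infty}\,dt'\bigr)$, a dyadic-block commutator estimate on the momentum equation (the paper's decomposition \eqref{deco.fre}--\eqref{k.4} with Lemma \ref{lemd.1}) yielding the $\wt{L}^1_T(H^{s+2})$ smoothing, pressure bounds from the divergence-form elliptic equation \eqref{k.11}, and a Gronwall argument closing the contrapositive extension argument for the blow-up criterion. The one deviation is that the paper needs no Beale--Kato--Majda logarithmic interpolation to close the Gronwall loop: rather than forcing every term to carry only the factor $\|\na u\|_{L^\infty}$, it allows the dissipation norm $\|u\|_{\wt{L}^1_t(H^{s+2})}$ to enter \emph{linearly} with coefficient $\|\na u\|_{L^2}\exp\bigl(C\|\na u\|_{L^1_{t'}(L^\infty)}\bigr)$ -- integrable in time by the basic energy estimate \eqref{k.0a} -- and tames the remaining sublinear powers of $\|u\|_{\wt{L}^1_t(H^{s+2})}$ by interpolating between $\|u\|_{L^1_t(H^1)}$ and $\|u\|_{\wt{L}^1_t(H^{s+2})}$ followed by Young's inequality, which is exactly how the double-exponential bound after \eqref{k.15} arises.
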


\begin{proof} We first deduce from the standard well-posedness theory
(see \cite{abidi,danchin04} for instance) that \eqref{INSa} has a
unique solution on $[0,T^\ast)$ for some positive time
$T^\ast<\infty.$ Moreover, there holds \beq \label{k.0} \frak{m}\leq
1+a(t,x)\leq \frak{M},\quad\mbox{and}\quad
\|a(t)\|_{L^p}=\|a_0\|_{L^p}\quad\forall\ p\in [2,\infty],\ \
t<T^\ast. \eeq And it follows from the proof of \eqref{thma.2} that
 \beq\label{k.0a} \|u\|_{L^\infty_t(L^2)}^2+\|\na
u\|_{L^2_t(L^2)}^2\leq C\|u_0\|_{L^2}^2\quad\mbox{for}\ \ t<T^\ast.
\eeq
 While we get, by applying $\dot\D_j$ to the continuous
equation of \eqref{INSa} and then taking the $L^2$ inner product of
the resulting equation with $\dot\D_j a,$ that for all $r>0,$ \beno
\begin{split}
\frac12\frac{d}{dt}\|\dot\D_ja\|_{L^2}^2\leq &
\bigl|\int_{\R^2}[\dot\D_j; u]\cdot\na a\ |\ \dot\D_j a\,dx\bigr|,
\end{split}
\eeno applying Lemma \ref{lemd.1} gives \beno
\frac12\frac{d}{dt}\|\dot\D_ja\|_{L^2}^2 \lesssim
c_j^2(t)2^{-2jr}\bigl(\|\na u\|_{L^\infty}\|\r-1\|_{\dot
H^r}+\|\na\r\|_{L^\infty} \|u\|_{\dot H^r}\bigr)\|a\|_{\dot H^r},
\eeno from which, we infer \beno
 \|a\|_{\wt L^\infty_t(\dot H^{r})}\leq \|a_0\|_{\dot
H^{r}}+C\int_0^t\bigl(\|\na u\|_{L^\infty}\|a\|_{\dot H^{r}}+\|\na
a\|_{L^\infty}\|u\|_{\dot H^{r}}\bigr)\,dt'. \eeno Applying
Gronwall's inequality, \eqref{k.0},  and the fact that
\beq\label{k.2} \|\nabla a\|_{L^\infty_t(L^p)}\leq \|\nabla
a_0\|_{L^p}\exp\bigl(\|\nabla
u\|_{L^1_t(L^\infty)}\bigr)\quad{for}\quad \forall \ p\in
[1,\infty],\eeq leads to \beq\label{k.3}
 \|a\|_{\wt L^\infty_t(H^{r})}\leq \bigl(\|a_0\|_{
H^{r}}+C\|\na
a_0\|_{L^\infty}\|u\|_{L^1_t({H}^r)}\bigr)\exp\bigl(C\|\nabla
u\|_{L^1_t(L^\infty)}\bigr). \eeq

On the other hand, applying $\D_q$ to the momentum equation of
\eqref{INSa} and using Bony's decomposition \eqref{bony} in the
inhomogeneous context, one has
\begin{equation}\label{deco.fre}
\begin{aligned}
&\partial_t\D_qu+u\cdot\nabla\D_qu +\D_q\nabla\Pi+\D_q\nabla
T_a\Pi-\dv\bigl((1+a)\widetilde\mu(a)\D_q(2d)\bigr)\\
&=\bigl[\D_q; u\cdot\nabla\bigr]u +\D_qT_{\nabla a}\Pi-\D_q
\cR(a,\nabla\Pi)+R_j,
\end{aligned}
\end{equation}
where \beq\label{k.4}
\begin{aligned}
R_q&=\D_q\bigl[(1+a)\dv(\widetilde\mu(a)2d)\bigr]
-\dv\bigl((1+a)\widetilde\mu(a)\D_q(2d)\bigr)
\\&
= \D_q\bigl[a\,\dv\big((\widetilde\mu(a)-1)2d\bigr)\bigr]
-\dv\bigl[a\,\D_q\bigl((\widetilde\mu(a)-1)2d\bigr)\bigr]
\\&\quad
+\D_q\bigl(a\,\dv(2d)\bigr)-\dv \bigl(a\,\D_q(2d)\bigr)
-\dv\bigl\{(1+a)[\D_q;\widetilde\mu(a)-1] \cdot(2d)\bigr\}
\\&
\eqdefa R_q^1+\dv\,R_q^2,
\end{aligned}
\eeq and $R_j^2\eqdefa-(1+a)[\D_q;\widetilde\mu(a)-1] \cdot(2d),$ $
R_q^1\eqdefa R_q-\dv\,R_q^2.$

Taking $L^2$ inner product of \eqref{deco.fre} with $\D_qu$ and
using $\dv\, u=0,$ we obtain \beno
\begin{split}
\f12&\f{d}{dt}\|\D_qu\|_{L^2}^2+\bigl((1+a)\widetilde\mu(a)\D_q(2d)\ |\ \D_q(2d)\bigr)_{L^2}\\
&=\Bigl(\bigl[\D_q; u\cdot\nabla\bigr]u +\D_qT_{\nabla a}\Pi-\D_q
\cR(a,\nabla\Pi)+R_q\  |\ \D_qu\Bigr)_{L^2}.
\end{split}
\eeno Notice that $\frak{m}\leq(1+a)$ and $0<\underline\mu\leq\mu,$
then we get, by applying standard process (like \cite{DAN}) and
Lemma \ref{lem2.1}, that \beq\label{k.5}
\begin{aligned}
\Vert u\Vert_{\widetilde L^\infty_t(H^s)} +\Vert u\Vert_{\widetilde
L^1_t(H^{s+2})} \lesssim& \Vert
u_0\Vert_{H^s}+\|\D_{-1}u\|_{L^1_t(L^2)}
+\Bigl(\sum_{q\geq-1}2^{2qs}\bigl\Vert[u;\D_q]\cdot\nabla
u\big\Vert_{L^1_t(L^2)}^2\Bigr)^{\frac{1}{2}}
\\&
+\Vert T_{\nabla a}\Pi\Vert_{\widetilde L^1_t(\dH^s)} +\Vert
\cR(a,\nabla\Pi)\Vert_{\widetilde L^1_t(\dH^{s})}
\\&+\Bigl(\sum_{q\geq-1}2^{2qs} \Vert
R_q^1\Vert_{L^1_t(L^2)}^2\Bigr)^{\frac{1}{2}}
+\Bigl(\sum_{q\geq-1}2^{2q(s+1)}\Vert
R_q^2\Vert_{L^1_t(L^2)}^2\Bigr)^{\frac{1}{2}},
\end{aligned}
\eeq where $R_q^1, R_q^2$ are given by \eqref{k.4}. For $s>0,$
applying Lemma \ref{lemd.1} yields \beno \Vert[u;\D_q]\cdot\nabla
u\big\Vert_{L^2}\lesssim c_q(t)2^{-qs}\|\na
u\|_{L^\infty}\|u\|_{H^s}, \eeno from which, we deduce
\beq\label{k.6}
\begin{split}
\Bigl(\sum_{q\geq-1}2^{2qs}\bigl\Vert[u;\D_q]\cdot\nabla
u\big\Vert_{L^1_T(L^2)}^2\Bigr)^{\frac{1}{2}}\lesssim&
\int_0^t\Bigl(\sum_{q\geq-1}2^{2js}\bigl\Vert[u;\D_q]\cdot\nabla
u\big\Vert_{L^2}^2\Bigr)^{\frac{1}{2}}\,dt'\\
\lesssim & \int_0^t\|\nabla u\|_{L^\infty}\|u\|_{H^s}\,dt'.
\end{split}
\eeq Along the same line, we have \beq\label{k.7} \begin{split} &
\Vert T_{\nabla a}\Pi\Vert_{\widetilde L^1_t(\dH^s)} \lesssim
\int_0^t\|\nabla
a\|_{L^\infty}\|\nabla\Pi\|_{\dH^{s-1}}\,dt' \quad\mbox{and}\\
& \Vert \cR(a,\nabla\Pi)\Vert_{\widetilde L^1_t(\dH^{s})} \lesssim
\int_0^t\|a\|_{\dot{B}^s_{\infty,2}}\|\nabla\Pi\|_{L^2}\,dt'
\lesssim \int_0^t\|a\|_{\dH^{s+1}}\|\nabla\Pi\|_{L^2}\,dt'.
\end{split}\eeq
Notice that \beno
\begin{split}
& \D_q\bigl[a\,\dv\big((\widetilde\mu(a)-1)2d\bigr)\bigr]
-\dv\bigl[a\,\D_q\bigl((\widetilde\mu(a)-1)2d\bigr)\bigr]\\
&=[\D_q; a]\dv\big((\widetilde\mu(a)-1)2d\bigr)+[a;
\dv]\D_q\bigl((\widetilde\mu(a)-1)2d\bigr),
\end{split}
\eeno applying Lemma \ref{lemd.1} yields \beno
\begin{split}
&\bigl\|\D_q\bigl[a\,\dv\big((\widetilde\mu(a)-1)2d\bigr)\bigr]
-\dv\bigl[a\,\D_q\bigl((\widetilde\mu(a)-1)2d\bigr)\bigr]\bigr\|\\
&\lesssim c_q(t)2^{-qs}\bigl(\|\nabla
a\|_{L^\infty}\|(\widetilde\mu(a)-1)2d\|_{H^s}
+\|(\widetilde\mu(a)-1)2d\|_{L^\infty}\|a\|_{H^{s+1}}\bigr),
\end{split}
\eeno from which, we deduce, by a similar proof of \eqref{k.6}, that
 \beq\label{k.8}
\begin{aligned}
&\Bigl(\sum_{q\geq-1}2^{2qs}\bigl\Vert
\D_q\bigl[a\,\dv\big((\widetilde\mu(a)-\mu)2d\bigr)\bigr]
-\dv\bigl[a\,\D_q\bigl((\widetilde\mu(a)-\mu)2d\bigr)\bigr]
 \big\|_{L^1_t(L^2)}^2\Big)^{\frac{1}{2}}
\\&
\lesssim \int_0^t\bigl(\|\nabla
a\|_{L^\infty}\|(\widetilde\mu(a)-1)2d\|_{H^s}
+\|(\widetilde\mu(a)-\mu)2d\|_{L^\infty}\|a\|_{H^{s+1}}\bigr)\,dt'
\\&
\lesssim \int_0^t\bigl(\|\nabla a\|_{L^\infty} (\|\nabla
u\|_{L^\infty}\|a\|_{H^s}
+\|\widetilde\mu(a)-1\|_{L^\infty}\|u\|_{H^{s+1}})+\|\nabla
u\|_{L^\infty}\|a\|_{H^{s+1}}\bigr)\,dt'.
\end{aligned}
\eeq While notice that \beno \D_q\bigl(a\,\dv(2d)\bigr)-\dv
\bigl(a\,\D_q(2d)\bigr)=[\D_q;a]\dv(2d)+[a;\dv]\D_j(2d), \eeno a
similar proof of \eqref{k.8} leads to \beno
\begin{aligned}
\Bigl(\sum_{q\geq-1}2^{2qs}\bigl\Vert
\D_q\bigl(a\,&\dv(2d)\bigr)-\dv \bigl(a\,\D_q(2d)\bigr)
\big\|_{L^1_t(L^2)}^2\Bigr)^{\frac{1}{2}} \\
&\lesssim \int_0^t\bigl(\|\nabla a\|_{L^\infty}\|u\|_{H^{s+1}}
+\|\nabla u\|_{L^\infty}\|a\|_{H^{s+1}}\bigr)\,dt'.
\end{aligned}
\eeno Therefore thanks to \eqref{k.4}, we obtain \beq\label{k.9}
\begin{aligned}
\Bigl(\sum_{q\geq-1}2^{2qs}\Vert
R^1_j\|_{L^1_t(L^2)}^2\Bigr)^{\frac{1}{2}} \lesssim &
\int_0^t\bigl(\|\nabla a\|_{L^\infty}(\|\nabla
u\|_{L^\infty}\|a\|_{H^s}\\& +(1+\|a\|_{L^\infty})\|u\|_{H^{s+1}})
+\|\nabla u\|_{L^\infty}\|a\|_{H^{s+1}}\bigr)\,dt'.
\end{aligned}
\eeq It follows the same line that \beq\label{k.10}
\begin{aligned}
\Bigl(\sum_{q\geq-1}2^{2q(s+1)}\Vert
R_j^2\Vert_{L^1_T(L^2)}^2\Bigr)^{\frac{1}{2}} \lesssim&
\int_0^t(1+\|a\|_{L^\infty})\bigl(\|\nabla
a\|_{L^\infty}\|u\|_{H^{s+1}} +\|\nabla
u\|_{L^\infty}\|a\|_{H^{s+1}}\bigr)\,dt'.
\end{aligned}
\eeq It remains to handle the pressure function $\Pi$ in
\eqref{INSa}. Toward this, we get, by taking divergence to the
momentum equation of \eqref{INSa}, that \beq \label{k.11}
\begin{aligned}
\dv\{(1+a)\nabla\Pi\} =&-\dv\{(u\cdot\nabla)u\}
+\dv\{a\,\dv[(\widetilde\mu(a)-1)(2d)]\}
\\&
+\dv\,\dv\{(\widetilde\mu(a)-1)(2d)\} +\dv(a\Delta u),
\end{aligned}
\eeq applying Bony's decomposition \eqref{bony} in the inhomogeneous
contaxt to the right hand side of \eqref{k.11}, we have \beno
\begin{aligned}
\dv\{(1+a)\nabla\Pi\} =&-\dv\{(u\cdot\nabla)u\} +\dv\,
T_a\dv\bigl\{T_{\widetilde\mu(a)-1}(2d)
+R(\widetilde\mu(a)-1,2d)\bigr\}
\\&
+\dv\,R(a,\dv[(\widetilde\mu(a)-1)(2d)])
+\dv\,\dv\,T_{\widetilde\mu(a)-1}(2d)
\\&+\dv\,\dv\,R(\widetilde\mu(a)-1,2d)
+T_{\nabla a}\Delta u +\dv\,R(a,\Delta u),
\end{aligned}
\eeno from which and the fact that $\dv u=0,$ we infer
 \beno
\begin{aligned}
\dv\{(1+a)\nabla\Pi\} =& -\dv\{(u\cdot\nabla)u\} +T_{\nabla
a}\dv\bigl\{T_{\widetilde\mu(a)-\mu^1}(2d)
+R(\widetilde\mu(a)-1,2d)\bigr\}
\\&
+T_{a}\dv\,T_{\nabla\widetilde\mu(a)}(2d)
+T_{a}T_{\nabla\widetilde\mu(a)}\Delta u
+T_{a}\dv\,\dv\,R(\widetilde\mu(a)-1,2d)
\\&
+\dv\,R(a,\dv[(\widetilde\mu(a)-1)(2d)])
+\dv\,T_{\nabla\widetilde\mu(a)}(2d)+T_{\nabla\widetilde\mu(a)}\Delta
u
\\&
+\dv\,\dv\,R(\widetilde\mu(a)-1,2d) +T_{\nabla a}\Delta u
+\dv\,R(a,\Delta u).
\end{aligned}
\eeno taking $L^2$ inner product of the above equation with $\Pi$
and using \eqref{k.0}, we reach  \beno
\begin{aligned}
\|\nabla\Pi\|_{L^2} \lesssim& \|\nabla u\|_{L^\infty}\|u\|_{L^2}
+\|T_{\nabla a}\dv\big\{T_{\widetilde\mu(a)-\mu^1}(2d)
+R(\widetilde\mu(a)-\mu^1,2d)\big\}\|_{\dot H^{-1}}\\
& +\|T_{a}\dv\,T_{\nabla\widetilde\mu(a)}(2d)\|_{\dH^{-1}}
+\|T_{a}T_{\nabla\widetilde\mu(a)}\Delta u\|_{\dH^{-1}}
+\|T_{a}\dv\,\dv\,R(\widetilde\mu(a)-1,2d)\|_{\dot H^{-1}}\\&
+\|R(a,\dv[(\widetilde\mu(a)-1)(2d)])\|_{L^2}
+\|T_{\nabla\widetilde\mu(a)}(2d)\|_{L^2}
+\|T_{\nabla\widetilde\mu(a)}\Delta u\|_{\dot H^{-1}}
\\&
+\|R(\widetilde\mu(a)-1,2d)\|_{\dot H^{1}} +\|T_{\nabla a}\Delta
u\|_{\dot H^{-1}} +\|R(a,\Delta u)\|_{L^2},
\end{aligned}
\eeno which together with standard  para-product estimates
(\cite{BCD}) and \eqref{k.0} implies \beq\label{k.12}
\begin{aligned}
\|\nabla\Pi\|_{L^2} \lesssim& \|\nabla
u\|_{L^\infty}(\|u\|_{L^2}+\|a\|_{H^1}) +\|\nabla a\|_{L^\infty}
(1+\|a\|_{H^1})\|\nabla u\|_{L^2} +\|a\|_{H^2}\|\nabla u\|_{L^2}.
\end{aligned}
\eeq To deal with $\|\nabla\Pi\|_{H^{s-1}},$ we get by acting $\D_q$
to \eqref{k.11} and taking $L^2$ inner product of the resulting
equation with $\D_q\Pi$ that
$$
\begin{aligned}
\|\nabla\Pi\|_{H^{s-1}} \lesssim& \|u\otimes
u\|_{H^s}+\|(1+a)\dv[(\widetilde\mu(a)-1)(2d)]\|_{H^{s-1}}
\\& +\|(1+a)(2d)\|_{H^{s-1}}
+\Bigl(\sum_{q\geq
-1}2^{2q(s-1)}\|[\Delta_q;a]\nabla\Pi\|_{L^2}^2\Big)^{\frac{1}{2}},
\end{aligned}
$$
from which, standard product laws in Sobolev space  and Lemma
\ref{lemd.1}, we obtain
$$
\begin{aligned}
\|\nabla\Pi\|_{H^{s-1}} \lesssim&
(1+\|a\|_{L^\infty}+\|u\|_{L^\infty})\|u\|_{H^s}
+\|a\|_{H^{s}}(\|\na u\|_{L^2}+\|\nabla
u\|_{L^\infty})+\|a\|_{L^\infty}\|u\|_{H^{s+1}}
\\&
+\|a\|_{H^s}\bigl(\|a\|_{H^{1}}\|\nabla
u\|_{L^\infty}+\|a\|_{L^\infty}\|u\|_{H^2}\bigr) +\|\nabla
a\|_{L^\infty}\|\nabla\Pi\|_{H^{s-2}}
+\|\nabla\Pi\|_{L^2}\|a\|_{H^s}.
\end{aligned}
$$
If $s-2\leq0,$ then
$\|\nabla\Pi\|_{H^{s-2}}\lesssim\|\nabla\Pi\|_{L^2}$ otherwise
$$
\|\nabla a\|_{L^\infty}\|\nabla\Pi\|_{H^{s-2}} \leq
\eta\|\nabla\Pi\|_{H^{s-1}} +C\|\nabla
a\|_{L^\infty}^{s-1}\|\nabla\Pi\|_{L^2}.
$$
As a consequence, by taking $\eta$ sufficiently small, we arrive at
 \beq\label{k.13}
\begin{aligned}
\|\nabla\Pi\|_{H^{s-1}} \lesssim& (1+\|u\|_{L^\infty})\|u\|_{H^s}
+\|a\|_{H^{s}}\|\nabla
u\|_{L^\infty}+\|a\|_{H^s}\bigl(\|a\|_{H^{1}}\|\nabla
u\|_{L^\infty}+\|u\|_{H^2}\bigr)
\\&
+\|u\|_{H^{s+1}} +\big(\|\nabla a\|_{L^\infty}
+\bigl\langle\bigl\langle\|\nabla
a\|_{L^\infty}^{s-1}\bigr\rangle\bigr\rangle_{s>2}+\|a\|_{H^s}\big)\|\nabla\Pi\|_{L^2},
\end{aligned}
\eeq where $\bigl\langle\bigl\langle\|\nabla
a\|_{L^\infty}^{s-1}\bigr\rangle\bigr\rangle_{s>2}=\|\nabla
a\|_{L^\infty}^{s-1}$ when $s>2$, and equal to $0$ otherwise.

Therefore, substituting \eqref{k.6}, \eqref{k.7}, \eqref{k.9},
\eqref{k.10}, \eqref{k.12} and \eqref{k.13} into \eqref{k.5}, we
reach \beq\label{k.14}
\begin{aligned}
&\Vert u\Vert_{\widetilde L^\infty_T(H^s)} +\Vert u\Vert_{\widetilde
L^1_T(H^{s+2})} \lesssim \Vert u_0\Vert_{H^s}
+\|u\|_{L^1_t(L^2)}+\int_0^t\|\nabla
u\|_{L^\infty}\|u\|_{H^s}\,dt'\\
&\quad +\int_0^t\|\nabla
a\|_{L^\infty}\bigl\{(1+\|u\|_{L^\infty})\|u\|_{H^s}
+\|a\|_{H^s}\bigl((1+\|a\|_{H^{1}})\|\nabla
u\|_{L^\infty}+\|u\|_{H^2}\bigr)\\
&\quad +\|u\|_{H^{s+1}}\bigr\}\,dt'+\int_0^t\bigl(\|\nabla
a\|_{L^\infty}^2 +\|\nabla
a\|_{L^\infty}^{s}+\|a\|_{H^{s+1}}\bigr)\|\nabla\Pi\|_{L^2}\,dt'\\
&\quad+\int_0^t\bigl\{\|\nabla a\|_{L^\infty}(\|\nabla
u\|_{L^\infty}\|a\|_{H^s}+\|u\|_{H^{s+1}}) +\|\nabla
u\|_{L^\infty}\|a\|_{H^{s+1}}\bigr\}\,dt'.
\end{aligned}
\eeq Thanks to \eqref{k.3}, one has
$$
\begin{aligned}
\|a\|_{L^\infty_t(H^{s+1})} &\leq C(1+\|u\|_{L^1_t(H^{s+1})})
\exp\bigl(C\|\nabla u\|_{L^1_t(L^\infty)}\bigr)
\\&
\leq C\bigl(1+\|u\|_{L^1_t(H^{1})}^{\frac{1}{s+1}}\|u\|_{\widetilde
L^1_t(H^{s+2})}^{\frac{s}{s+1}} \bigr)\exp\bigl(C\|\nabla
u\|_{L^1_t(L^\infty)}\bigr).
\end{aligned}
$$
It follows the same line that \beno \|a\|_{L^\infty_t(H^{s})} \leq
C\bigl(1+\|u\|_{L^1_t(H^{1})}^{\frac{2}{s+1}}\|u\|_{\widetilde
L^1_t(H^{s+2})}^{\frac{s-1}{s+1}} \bigr)\exp\bigl(C\|\nabla
u\|_{L^1_t(L^\infty)}\bigr), \eeno from which and \eqref{k.2}, we
deduce that \beno \begin{split} & \int_0^t\|\nabla
a\|_{L^\infty}\|a\|_{H^s}\|u\|_{H^2}\,dt' \lesssim
\bigl(1+t^{\frac{2}{s+2}}\|u\|_{\widetilde
L^1_t(H^{s+2})}^{\frac{s}{s+2}}\bigr)\|u\|_{L^1_t(H^{1})}^{\frac{s}{s+1}}
\|u\|_{\widetilde L^1_t(H^{s+2})}^{\frac{1}{s+1}}\exp\bigl(C\|\nabla
u\|_{L^1_t(L^\infty)}\bigr),\\
&\int_0^t\bigl(\|\nabla a\|_{L^\infty}^2+\|\nabla
a\|_{L^\infty}^s\bigr) \|a\|_{H^2}\|\nabla u\|_{L^2}\,dt' \lesssim
\sqrt{t}\bigl(1+\|u\|_{L^1_t(H^{1})}^{\frac{s}{s+1}}\|u\|_{\widetilde
L^1_t(H^{s+2})}^{\frac{1}{s+1}} \bigr)\exp\bigl(C\|\nabla
u\|_{L^1_t(L^\infty)}\bigr),\\
&\int_0^t\|a\|_{H^{s+1}}\|\nabla u\|_{L^\infty}\,dt' \lesssim
\bigl(1+\|u\|_{L^1_{\tau}(H^{1})}^{\frac{1}{s+1}}\|u\|_{\widetilde
L^1_{\tau}(H^{s+2})}^{\frac{s}{s+1}} \bigr)\exp\bigl(C\|\nabla
u\|_{L^1_t(L^\infty)}\bigr),
\end{split} \eeno and
$$
\begin{aligned}
\int_0^t\|\nabla u\|_{L^2}&\|a\|_{H^{s+1}}\|a\|_{H^2}\,dt'\\ &
\lesssim \int_0^t\|\nabla u\|_{L^2}
(1+\|u\|_{L^1_{t'}(H^{s+1})})(1+\|u\|_{L^1_t(H^{2})})\exp\bigl(C\|\nabla
u\|_{L^1_{t'}(L^\infty)}\bigr)\,dt'
\\&
\lesssim  \int_0^t\|\nabla u\|_{L^2}
\bigl(1+\|u\|_{L^1_{\tau}(H^{1})}^{\frac{1}{s+1}}\|u\|_{\widetilde
L^1_{\tau}(H^{s+2})}^{\frac{s}{s+1}}
 +\|u\|_{L^1_{\tau}(H^{1})}^{\frac{s}{s+1}}\|u\|_{\widetilde
L^1_{\tau}(H^{s+2})}^{\frac{1}{s+1}}\\
&\qquad +\|u\|_{\widetilde
L^1_{\tau}(H^{s+2})}\bigr)\exp\bigl(C\|\nabla
u\|_{L^1_{t'}(L^\infty)}\bigr)\,dt'.
\end{aligned}
$$
Substituting the above inequalities into \eqref{k.14} and using
Young inequality, we obtain \beq\label{k.15}
\begin{aligned}
\Vert u\Vert_{\widetilde L^\infty_t(H^s)} +\Vert u\Vert_{\widetilde
L^1_t(H^{s+2})} \lesssim& 1+t+ \int_0^t\|\na
u\|_{L^2}\exp\bigl(C\|\nabla
u\|_{L^1_{t'}(L^\infty)}\bigr)\|u\|_{\widetilde
L^1_{\tau}(H^{s+2})}\,dt'\\
& +\int_0^t\bigl(\|\nabla
u\|_{L^\infty}+(1+\sqrt{t'})\exp\bigl(C\|\nabla
u\|_{L^1_{t'}(L^\infty)}\bigr)\bigr)\|u\|_{H^s}\,dt'\\
&+f(t,s,a_0,u_0)\exp\bigl(C\|\nabla u\|_{L^1_{t}(L^\infty)}\bigr),
\end{aligned}
\eeq for $f(t,s,a_0,u_0)$ given by $$
\begin{aligned}
f(t,s,u_0,a_0)\eqdefa 1+t+\Vert
u_0\Vert_{H^s}+\bigl(1+t^{\frac{s+1}{2}}+t^{\frac{s+1}{2s}}\bigr)\|u\|_{L^1_t(H^1)}
+\bigl(t^{\frac{2s+2}{s}}
+t^{\frac{6+s}{2s}}\bigr)\|u\|_{L^1_t(H^1)}^{s+2}.
\end{aligned}
$$
Applying Gronwall's inequality to \eqref{k.15} and using
\eqref{k.0a}, we arrive at
$$
\Vert u\Vert_{\widetilde L^\infty_T(H^s)} +\Vert u\Vert_{\widetilde
L^1_T(H^{s+2})} \le
C\,f(t,s,u_0,a_0)\exp\Big\{C(1+\sqrt{t}+t^{\frac{3}{2}})\exp\bigl\{C\|\nabla
u\|_{L^1_{t}(L^\infty)}\bigr\}\Big\}.
$$
This together with \eqref{k.3} completes the proof of Theorem
\ref{est.app}.
\end{proof}

\renewcommand{\theequation}{\thesection.\arabic{equation}}
\setcounter{equation}{0}

\section{The proof of Theorem \ref{thm1.1}}\label{sect4}

Notice from \cite{desjardins} that with the additional regularity
assumptions that $\na \r_0\in L^r(\Bbb{T}^2)$ for some $r>2,$
Desjardins proved that: there exists some positive time $\tau$ so
that Lions weak solution $(\r,u)$ satisfies $u\in L^2((0,\tau);
H^2(\R^2))$ and $\na\r\in L^\infty((0,\tau);L^{\bar{r}}({\Bbb
T}^2))$ for any $\bar{r}<r.$
   Here with Proposition
\ref{propa.lip}, we shall prove that $\tau=\infty$  and $\bar{r}=r.$

\begin{prop}\label{propu.1}
{\sl Let $(\r,u,\na\Pi)$ be a smooth enough solution of \eqref{1.1}
on $[0,T^\ast).$ Then under the assumptions of Theorem \ref{thm1.1},
 we have \beq\label{u.1}
\begin{aligned}
&\|\na u\|_{L^1_t(L^\infty)}\leq 2CC_1^2\bigl(1+\|\r_0\|_{B^{(2/q)_+}_{\infty,\infty}}\bigr)\eqdefa\frak{C},\\
& \|\na\r\|_{L^\infty_t(L^r)}+\|\Delta
u\|_{L^1_t(L^2)}+\|\nabla\Pi\|_{L^1_t(L^2)} \leq
C\bigl(1+\|\na\r_0\|_{L^r}\bigr)\exp\bigl(C\frak{C}\bigr)\quad
\end{aligned}
\eeq for $t\leq T^\ast$ and  $C_1$ given by \eqref{p.7a}. Here and
in that follows, the uniform constant $C$ may depends on $m, M$ and
$\|\mu'\|_{L^\infty}.$}
\end{prop}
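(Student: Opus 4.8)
The plan is to prove the three bounds in \eqref{u.1} one after another, the first being essentially a reformulation of Proposition \ref{propa.lip} and the others built on it. First I would check that the smallness hypothesis \eqref{a.7} required by Proposition \ref{propa.lip} follows from assumption \eqref{thm1.1av} of Theorem \ref{thm1.1}: recalling from \eqref{p.7a} that $C_1^2\sim C_0\exp(CC_0)$, the quantity $C_1^2(1+\|\r_0\|_{B^{(2/q)_+}_{\infty,\infty}})\|\mu(\r_0)-1\|_{L^\infty}$ controlling \eqref{a.7} is precisely of the form bounded in \eqref{thm1.1av}. Granting \eqref{a.8}, the continuous embedding $\|\na u\|_{L^\infty}\lesssim\|u\|_{\dot B^1_{\infty,1}}$ (via $\dot B^0_{\infty,1}\hookrightarrow L^\infty$) gives, after choosing $\e$ with $\f2q+\e=(2/q)_+$,
\[
\|\na u\|_{L^1_t(L^\infty)}\lesssim\|u\|_{L^1_t(\dot B^1_{\infty,1})}\le\frak{C}.
\]

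Next, since $\dv\,u=0$ the density obeys the transport equation $\pa_t\r+u\cdot\na\r=0$, so $\na\r$ solves $\pa_t\na\r+u\cdot\na\na\r=-\na u\cdot\na\r$. Exactly as in \eqref{k.2}, the $L^r$ estimate and Gronwall's inequality yield $\|\na\r\|_{L^\infty_t(L^r)}\le\|\na\r_0\|_{L^r}\exp(\|\na u\|_{L^1_t(L^\infty)})\le\|\na\r_0\|_{L^r}\exp(\frak{C})$. For the remaining two terms I would reformulate the momentum equation of \eqref{1.1} as a perturbed constant-coefficient Stokes system. Using $\dv\,u=0$ one has $\dv(2\mu(\r)d)=\mu(\r)\D u+2d\,\na\mu(\r)$, so the momentum equation reads $\mu(\r)\D u-\na\Pi=G$ with $G\eqdefa\r\pa_tu+\r u\cdot\na u-2d\,\na\mu(\r)$. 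Writing $\mu(\r)=1+(\mu(\r)-1)$ recasts this as
\[
-\D u+\na\Pi=H,\qquad\dv\,u=0,\qquad H\eqdefa-G+(\mu(\r)-1)\D u.
\]
Applying $\mathbb{P}$ and $\mathcal{Q}=I-\mathbb{P}$ and using that $\D u$ is divergence free gives $\D u=-\mathbb{P}H$ and $\na\Pi=\mathcal{Q}H$, hence $\|\D u\|_{L^2}+\|\na\Pi\|_{L^2}\lesssim\|H\|_{L^2}\le\|G\|_{L^2}+\|\mu(\r)-1\|_{L^\infty}\|\D u\|_{L^2}$. Since $\|\mu(\r)-1\|_{L^\infty}\le\e_0$ is small, the last term is absorbed, leaving $\|\D u\|_{L^2}+\|\na\Pi\|_{L^2}\lesssim\|G\|_{L^2}$.

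It then remains to integrate $\|G\|_{L^2}$ in time. By \eqref{b.3}, H\"older with $\f12=\f1p+\f1r$ (so $p=\f{2r}{r-2}$) and $|\mu'|\le\|\mu'\|_{L^\infty}$,
\[
\|G\|_{L^2}\lesssim\|\pa_tu\|_{L^2}+\|u\|_{L^4}\|\na u\|_{L^4}+\|\na u\|_{L^p}\|\na\r\|_{L^r}.
\]
Here $\|\pa_tu\|_{L^1_t(L^2)}\le C_1$ as in the proof of Lemma \ref{lema.1.1}, and $\int_0^t\|u\|_{L^4}\|\na u\|_{L^4}\,dt'$ is bounded by \eqref{a.4a}. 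For the last term, the constraint $2<r<\f2{1-2\d}$ forces $\d>\f1p$, so by Proposition \ref{propb.3} (applicable since $p\le p^\ast$ for $\e_0$ small) the weight $\w{t'}^{-(\f12+\d-\f1p)_-}$ is square integrable on $\R^+$, whence $\int_0^t\|\na u\|_{L^p}\,dt'\lesssim\sqrt{p}\,C_1^{2-\f2p}$; combined with the density bound this gives $\int_0^t\|\na u\|_{L^p}\|\na\r\|_{L^r}\,dt'\lesssim\|\na\r_0\|_{L^r}\exp(\frak{C})\,C_1^{2-\f2p}$. Since $\frak{C}\ge CC_1^2$, every polynomial factor in $C_1$ is absorbed into $\exp(C\frak{C})$, and collecting the three contributions yields $\|\D u\|_{L^1_t(L^2)}+\|\na\Pi\|_{L^1_t(L^2)}\le C(1+\|\na\r_0\|_{L^r})\exp(C\frak{C})$.

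The main obstacle is the coupling produced by the term $2d\,\na\mu(\r)=2d\,\mu'(\r)\na\r$ in $G$: unlike $\mu(\r)-1$, the factor $\na\mu(\r)$ is \emph{not} small, so $\|\na u\|_{L^p}\|\na\r\|_{L^r}$ ties the $\D u$ estimate back to the density regularity of the second step and cannot be closed by smallness. The delicate point is that this product is instead controlled through the \emph{time integrability} of $\|\na u\|_{L^p}$, which is exactly what the low-frequency hypothesis $u_0\in H^{-2\d}$ with $\d>\f1p$ and the weighted decay estimate of Proposition \ref{propb.3} are designed to furnish.
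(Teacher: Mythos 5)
Your proposal is correct, and its first two steps coincide with the paper's: the paper likewise deduces \eqref{a.7} from \eqref{thm1.1av}, invokes Proposition \ref{propa.lip} together with $\dot B^0_{\infty,1}\hookrightarrow L^\infty$ for the Lipschitz bound, and obtains $\|\na\r\|_{L^\infty_t(L^r)}\leq\|\na\r_0\|_{L^r}\exp(C\frak{C})$ from the transport equation. The one genuine divergence is the elliptic step. The paper does not pass through a perturbed constant-coefficient Stokes system: it tests the momentum equation directly against $\D u$, getting $\|\sqrt{\mu(\r)}\D u\|_{L^2}^2=\int_{\R^2}\bigl(\r\p_tu+\r u\cdot\na u-2\mu'(\r)\na\r\cdot d\bigr)\cdot\D u\,dx$ (the pressure drops out since $\dv\,\D u=0$), so the coercivity $\mu(\r)\geq\underline\mu$ alone gives $\|\D u\|_{L^2}\lesssim\|G\|_{L^2}$ with exactly your $G$, and $\|\na\Pi\|_{L^1_t(L^2)}$ is then read off from the equation afterwards. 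Your route — writing $-\D u+\na\Pi=-G+(\mu(\r)-1)\D u$, applying $\mathbb{P}$ and $\mathcal{Q}$, and absorbing $\|\mu(\r)-1\|_{L^\infty}\|\D u\|_{L^2}$ — yields $\D u$ and $\na\Pi$ simultaneously but consumes the smallness hypothesis $\|\mu(\r_0)-1\|_{L^\infty}\leq\e_0$; that is harmless here, but the paper's test-function argument is slightly more robust in that it needs only the lower bound on $\mu$, which is why the same device also works in the smallness-free setting of Theorem \ref{est.app}. From there the two proofs reconverge: your H\"older exponent $p$ is the paper's $n$ with $\f1r+\f1n=\f12$, both use that $r<\f2{1-2\d}$ forces $\d>\f1n$ so that Proposition \ref{propb.3} gives $\|\na u\|_{L^1_t(L^n)}\lesssim C_1^2$, and both close with $\|\p_tu\|_{L^1_t(L^2)}\leq C_1$ and \eqref{a.4a}, absorbing the polynomial factors of $C_1$ into $\exp(C\frak{C})$.
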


\begin{proof} Under the smallness assumption \eqref{thm1.1av}, we
find that \eqref{a.7} is satisfied. Hence, we get, by applying
Proposition \ref{propa.lip}, that \beq\label{u.1al} \|\na
u\|_{L^\infty_t(L^\infty)}\leq
\|u\|_{L^1_t(\dot{B}^1_{\infty,1})}\leq
2CC_1^2\bigl(1+\|\r_0\|_{B^{\f2q+\e}_{\infty,\infty}}\bigr)
\quad\mbox{for}\quad t<T^\ast\ \mbox{and any}\ \ \e>0. \eeq

On the other hand, by taking $L^2$ inner product of $\D u$ with the
momentum equation of \eqref{1.1}, we obtain \beno
\begin{split}
\|\sqrt{\mu(\r)}\Delta
u\|_{L^2}^2=\int_{\R^2}\bigl(\r\pa_tu+\r(u\cdot\nabla)u
-2\mu'(\r)\nabla\r\cdot d\bigr)\ |\ \D u\,dx,
\end{split}
\eeno from which and \eqref{b.3}, we infer \beq\label{u.1as}
\|\Delta u\|_{L^1_t(L^2)}\leq C\Bigl(
\|\p_tu\|_{L^1_t(L^2)}+\|\na\r\|_{L^\infty_t(L^r)}\|\na
u\|_{L^1_t(L^{n})}+\|u\cdot\na u\|_{L^1_t(L^2)}\Bigr),\eeq where $n$
is determined by $\f1r+\f1{n}=\f12.$ It is easy to check, from the
transport equation of \eqref{1.1} and \eqref{u.1al}, that \beno
\|\na\r\|_{L^\infty_t(L^r)}\leq \|\na\r_0\|_{L^r}\exp\bigl(C\|\na
u\|_{L^1_t(L^\infty)}\bigr)\leq
\|\na\r_0\|_{L^r}\exp(C\frak{C})\quad\mbox{for}\quad t\leq T^{\ast},
\eeno moreover, as $r<\f2{1-2\d},$  \eqref{p.8} ensures that \beno
\|\na u\|_{L^1_t(L^{n})}\leq C\|\w{t'}^{(\f12+\d-\f1n)_-}\na
u\|_{L^2_t(L^2)}\|\w{t'}^{-(\d+\f1r)_-}\|_{L^2_t}\leq CC_1^2, \eeno
so that we deduce from \eqref{u.1as} that \beq\label{u.1af} \|\Delta
u\|_{L^2_t(L^2)}\leq C
\bigl(1+\|\na\r_0\|_{L^r}\bigr)\exp\bigl(C\frak{C}\bigr)\quad\mbox{for}\quad
t<T^\ast.\eeq Finally thanks to the momentum equation of
\eqref{1.1}, one has \beno \|\na\Pi\|_{L^1_t(L^2)}\leq C\Bigl(
\|\p_tu\|_{L^1_t(L^2)}+\|\na\r\|_{L^\infty_t(L^r)}\|\na
u\|_{L^1_t(L^{n})}+\|u\cdot\na u\|_{L^1_t(L^2)}+\|\Delta
u\|_{L^1_t(L^2)}\Bigr),\eeno which along with the proof of
\eqref{u.1af} leads to the estimate of $ \|\na\Pi\|_{L^1_t(L^2)}.$
This completes the proof of Proposition \ref{propu.1}.
\end{proof}

We now turn to the proof of Theorem \ref{thm1.1}.

\begin{proof}[Proof  of Theorem \ref{thm1.1}]  Firstly under the assumption of  \eqref{thma.1} and \eqref{thm1.1av},  Theorem
\ref{est.app} together with Proposition \ref{propu.1} ensures
\eqref{1.1} has a global solution $(\r,u)$ with $\r-1\in
\cC([0,\infty);H^{1+s}(\R^2))$ and $u\in
\cC([0,\infty);H^s(\R^2))\cap
\wt{L}^1_{\mbox{loc}}(\R^+;H^{2+s}(\R^2))$ provided that $\r_0-1\in
H^{1+s}(\R^2),$  $u_0\in H^s(\R^2)$ and $\mu(\cdot)\in
W^{2+[s],\infty}(\R^+)$ for some $s>1.$

To prove the global existence of strong solutions of \eqref{1.1}
without the additional regularity assumption that $\r_0-1\in
H^{1+s}(\R^2)$ and $u_0\in H^s(\R^2)$ for $s>1,$ we denote
$\r_{0,\eta}\eqdefa\r_0\ast j_\eta,$ $u_{0,\eta}\eqdefa u_0\ast
j_\eta,$  and $\mu_\eta=\mu\ast j_\eta,$ where
$j_\eta(|x|)=\eta^{-2}j(|x|/\eta)$ is the standard Friedrich's
mollifier. Then \eqref{1.1} with viscous coefficient $\mu_\eta$ and
with initial data $(\r_{0,\eta},u_{0,\eta})$ has a global solution
$(\r_\eta,u_\eta, \na\Pi_\eta).$ Moreover, Proposition \ref{propu.1}
ensures that $(\r_\eta,u_\eta, \na\Pi_\eta)$ satisfy the uniform
estimates \eqref{thma.2} and \eqref{u.1}. This together with a
standard compactness argument yields the existence part of Theorem
\ref{thm1.1}. For simplicity, we skip the details here.

It remains to prove the uniqueness part of Theorem \ref{thm1.1}.
Indeed let $(\r_i,u_i,\nabla\Pi_i),$ for $i=1,2,$ be two solutions
of \eqref{1.1} so that $\r_i\in
\cC_b([0,T];L^\infty\cap\dot{W}^{1,r}(\R^2)),$ $u_i\in
\cC_b([0,T];L^2(\R^2))\cap L^2((0,T);\dot{H}^1(\R^2))\cap
L^1((0,T);\mbox{Lip}(\R^2)),$ and $\p_tu\in L^2((0,T); L^2(\R^2)),$
we denote by \beq\label{u.2} (\delta \r,\delta u,\nabla\delta\Pi)
\eqdefa (\r_2-\r_1,u_2-u_1,\nabla\Pi_2-\nabla\Pi_1). \eeq Then the
system for $(\delta \r,\delta u,\delta\nabla\Pi)$ reads
\beq\label{u.3} \left\{\begin{array}{l}
\displaystyle \pa_t\delta\rho+u_2\cdot\nabla\delta\r=-\delta u\cdot\nabla\r_1\\
\displaystyle \rho_2\pa_t\delta u+\r_2(u_2\cdot\nabla)\delta u
-2\dv\{\mu(\rho_2)d(\delta u)\}
+\grad\delta\Pi=\d F, \\
\displaystyle \dv\,\delta u = 0, \\
\displaystyle (\delta\r,\delta u)_{|t=0}=(0,0).
\end{array}\right.
\eeq where $\d F$ is determined by
$$
\begin{aligned}
\d F=-\delta\r\pa_tu_1-\delta\r(u_2\cdot\nabla)u_1-\r_1(\delta
u\cdot\nabla)u_1 +2\dv\{(\mu(\rho_2)-\mu(\r_1))d(u_1)\}.
\end{aligned}
$$
Let $2<m<r,$ and $p\eqdefa\f{mr}{r-m},$  we deduce from the
transport equation of \eqref{u.3} that \beq \label{u.6}
\begin{split} \|\delta\r(t)\|_{L^m}\leq& \int_0^t\|\delta
u\|_{L^p}\|\nabla\r_1\|_{L^r}\,dt'
\leq\|\nabla\r_1\|_{L^\infty_t(L^r)}\|\delta
u\|_{L^1_t(L^p)}\\
\leq& Ct^{\f12+\f1p}\|\nabla\r_1\|_{L^\infty_t(L^r)}\|\d
u\|_{L^\infty_t(L^2)}^{\f2p}\|\na \d u\|_{L^2_t(L^2)}^{1-\f2p}.
\end{split}
\eeq Whereas taking $L^2$ inner product $\d u$ with the momentum
equation of \eqref{u.3},  we get \beno
\f12\f{d}{dt}\int_{\R^2}\r_2|\d u|^2\,dx+2\int_{\R^2}\mu(\r_2)d(\d
u):d(\d u)\,dx=\int_{\R^2} \d F\ |\ \d u\,dx, \eeno which leads to
\beq\label{u.7}
\begin{aligned}
\|\delta u\|_{L^\infty_t(L^2)}^2+\|\nabla\delta u\|_{L^2_t(L^2)}^2
\leq&
\|\delta\r\|_{L^\infty_t(L^m)}\int_0^t\bigl(\|\pa_tu_1\|_{L^2}+\|u_2\|_{L^4}\|\nabla
u_1\|_{L^4}\bigr) \|\delta u\|_{L^{\bar{m}}}\,dt'
\\&
+\int_0^t\|\r_1\|_{L^\infty}\|\nabla u_1\|_{L^\infty} \|\delta
u\|_{L^2}^2\,dt'\\
& +\|\delta\r\|_{L^\infty_t(L^m)}\int_0^t\|\nabla
u_1\|_{L^{\bar{m}}} \|\nabla\delta u\|_{L^2}\,dt',
\end{aligned}
\eeq where $\f1m+\f1{\bar{m}}=\f12.$ It follows from \eqref{u.6}
that \beno
\begin{split}
\|\delta\r\|_{L^\infty_t(L^m)}\int_0^t\|\pa_tu_1\|_{L^2} \|\delta
u\|_{L^{\bar{m}}}\,dt'\leq& Ct^{1-\f1r}\|\p_tu_1\|_{L^2_t(L^2)}\|\d
u\|_{L^\infty_t(L^2)}^{2(\f12-\f1r)}\|\na\d
u\|_{L^2_t(L^2)}^{2(\f12+\f1r)}\\
\leq &\eta_1(t)\bigl(\|\d u\|_{L^\infty_t(L^2)}^{2}+\|\na\d
u\|_{L^2_t(L^2)}^{2}\bigr),
\end{split}
\eeno where $\lim_{t\to 0}\eta_1(t)=0.$ The same estimate holds for
$\|\delta\r\|_{L^\infty_t(L^m)}\int_0^t\|u_2\|_{L^4}\|\nabla
u_1\|_{L^4}\|\delta u\|_{L^{\bar{m}}}\,dt'.$

While again notice from \eqref{u.6} that \beno
\begin{split}
\|\delta\r\|_{L^\infty_t(L^m)}&\int_0^t\|\nabla u_1\|_{L^{\bar{m}}}
\|\nabla\delta u\|_{L^2}\,dt'\\
\leq &Ct^{1+\f2q}\|\na u_1\|_{L^2_t(L^{\bar{m}})}^2\|\d
u\|_{L^\infty_t(L^2)}^{\f4p}\|\na
\d u\|_{L^2_t(L^2)}^{2(1-\f2p)}+\f18\|\na \d u\|_{L^2_t(L^2)}^2\\
\leq &\eta_2(t)\|\d u\|_{L^\infty_t(L^2)}^{2}+\f14\|\na\d
u\|_{L^2_t(L^2)}^{2},
\end{split}
\eeno
 where $\eta_2(t)$ satisfies $\lim_{t\to 0}\eta_2(t)=0.$

Hence taking $t_1$ small enough, we infer from \eqref{u.7} that
\beno \|\delta u\|_{L^\infty_t(L^2)}^2\leq
C\int_0^t\|\r_1\|_{L^\infty}\|\nabla u_1\|_{L^\infty} \|\delta
u\|_{L^2}^2\,dt'\quad\mbox{for}\quad  t\leq t_1. \eeno Applying
Gronwall's inequality yields \beno \delta u=0\quad\mbox{for}\quad
t\leq t_1, \eeno from which and \eqref{u.6}, we obtain $ \delta\r=0$
for $ t\leq t_1. $ Finally thanks to the momentum equation of
\eqref{u.3}, we get that $\na \d\Pi=0$ for $t\leq t_1.$ The
uniqueness on the whole time interval $[0,T]$ then follows by a
bootstrap argument. This completes the proof of Theorem
\ref{thm1.1}.
\end{proof}

\renewcommand{\theequation}{\thesection.\arabic{equation}}
\setcounter{equation}{0}

\section{Proof of Corollary \ref{cst}}
In this section, we shall repeat the arguments from Section
\ref{sect2}, Section \ref{sect3} and Section \ref{sect4} to prove
the global well-posedness of \eqref{1.1} in the case of constant
viscosity.

\begin{proof}[Proof of Corollary \ref{cst}] We first deduce, by a similar proof of Theorem 1 of   \cite{AGZ},
that the  system \eqref{1.1} with $\mu(\r)=1$ has a unique local
solution on $[0, T^\ast)$ so that \beq\label{p.0} \begin{split} &
\r-1 \in {\mathcal{C}_{b}}([0, t]; \dot B^1_{2,1}\cap\dot
B^\alpha_{\infty,\infty}(\R^2)) \quad\mbox{and}\quad u\in
{\mathcal{C}_{b}}([0, t]; \dot B^0_{2,1}(\R^2)) \cap L^1([0,t];
\,\dot B^{2}_{2,1}(\R^2)),\\
&\f12\|\sqrt{\r}u(t)\|_{L^2}^2+\|\na
u\|_{L^2_t(L^2)}^2=\f12\|\sqrt{\r_0}u_0\|_{L^2}^2\quad\mbox{and}\quad
m\leq\r(t,x)\leq M, \end{split} \eeq for any $t<T^\ast.$ Then to
complete the proof of Corollary \ref{cst}, we only need to show that
$T^\ast=\infty.$

In fact,  thanks to \eqref{p.0}, we can find some  $t_0\in
(0,T^\ast)$ such that $u(t_0)\in H^1(\R^2).$ Then for $t_0\leq
t<T^\ast,$ we get, by taking the $L^2$ inner product of the momentum
equation of \eqref{1.1} with $\p_tu$, that \beno
\begin{split}
\f12\|\na
u(t)\|_{L^2}^2+&\|\sqrt{\r}\p_tu\|_{L^2((t_0,t);L^2)}^2\\
=&\f12\|\na
u(t_0)\|_{L^2}^2-\int_{t_0}^t\int_{\R^2}\r
u\cdot\na u\ |\ \p_tu\,dx\,dt'\\
\leq &\f12\|\na
u(t_0)\|_{L^2}^2+\f{M^2}{2m}\int_{t_0}^t\|u\|_{L^4}^2\|\na
u\|_{L^4}^2\,dt'+\f{m}2\|\p_tu\|_{L^2((t_0,t);L^2)}^2,
\end{split}
\eeno which along with \eqref{p.0} and $\|a\|_{L^4}^2\leq
C\|u\|_{L^2}\|\na u\|_{L^2}$ implies that for any $\e>0$ and
$t<T^\ast,$ \beq \label{p.1} \|\na
u(t)\|_{L^2}^2+m\|\p_tu\|_{L^2((t_0,t);L^2)}^2 \leq \|\na
u(t_0)\|_{L^2}^2+\f{M^4}{4m^2\e}\int_{t_0}^t\|u\|_{L^2}^2\|\na
u\|_{L^2}^4\,dt'+\e\|\D u\|_{L^2((t_0,t);L^2)}^2. \eeq However, when
$\mu(\r)=1,$ we deduce from the property of linear Stokes system
that \beno
\begin{split}
\|\D u\|_{L^2((t_0,t);L^2)}^2&+\|\na\Pi\|_{L^2((t_0,t);L^2)}^2\leq
\|\r\p_tu\|_{L^2((t_0,t);L^2)}^2+\|\r u\cdot\na
u\|_{L^2((t_0,t);L^2)}^2\\
\leq&
M^2\|\p_tu\|_{L^2((t_0,t);L^2)}^2+\f{M^4}2\int_{t_0}^t\|u\|_{L^2}^2\|\na
u\|_{L^2}^4\,dt'+\f12\|\D u\|_{L^2((t_0,t);L^2)}^2,
\end{split}
\eeno which gives rise to \beq \label{p.2adf} \|\D
u\|_{L^2((t_0,t);L^2)}^2+\|\na\Pi\|_{L^2((t_0,t);L^2)}^2\leq
2M^2\|\p_tu\|_{L^2((t_0,t);L^2)}^2+{M^4}\int_{t_0}^t\|u\|_{L^2}^2\|\na
u\|_{L^2}^4\,dt'. \eeq

Summing up \eqref{p.1} with $2\e\times$\eqref{p.2adf}, we obtain
\beno
\begin{split}
\|\na u(t)\|_{L^2}^2+&m\|\p_tu\|_{L^2((t_0,t);L^2)}^2 +\e\bigl(\|\D
u\|_{L^2((t_0,t);L^2)}^2+\|\na\Pi\|_{L^2((t_0,t);L^2)}^2\bigr)\\
 &\leq \|\na
u(t_0)\|_{L^2}^2+4M^2\e\|\p_tu\|_{L^2((t_0,t);L^2)}^2+C_{m,M}\|u_0\|_{L^2}^2\int_{t_0}^t\|\na
u\|_{L^2}^4\,dt'.
\end{split}
\eeno Taking $\e=\f{m}{8M^2}$ in the above inequality and then
applying Gronwall's inequality leads to \beq \label{p.3}
\begin{split}\|&\na
u(t)\|_{L^2}^2+\f{m}2\|\p_tu\|_{L^2((t_0,t);L^2)}^2
+\f{m}{8M^2}\bigl(\|\D
u\|_{L^2((t_0,t);L^2)}^2+\|\na\Pi\|_{L^2((t_0,t);L^2)}^2\bigr)\\
&\leq \|\na u(t_0)\|_{L^2}^2\exp\bigl\{C_{m,M}\|u_0\|_{L^2}^2\|\na
u\|_{L^2((t_0,t);L^2)}^2\bigr\}\\
&\leq \|\na
u(t_0)\|_{L^2}^2\exp\bigl\{C_{m,M}\|u_0\|_{L^2}^4\bigr\}.
\end{split}
\eeq

On the other hand, we get, by a similar derivation of \eqref{a.3},
that \beq\label{p.4}
\begin{aligned}
\|u\|_{\widetilde L^1((t_0,t);\dot B^{2+\al}_{2,\infty})} \lesssim
&\|u(t_0)\|_{H^1} +\sup_j2^{j\al}\|[\dot\Delta_j\mathbb{P};
u\cdot\nabla]u\|_{L^1((t_0,t);L^2)}
\\&\qquad\qquad\quad
+\sup_j2^{j\al} \|[\dot\Delta_j\mathbb{P};\frac{1}{\rho}]\bigl(\D u
-\nabla\Pi\bigr)\|_{L^1((t_0,t);L^2)}.
\end{aligned}
\eeq The proof of \eqref{a.4} yields \beno
\sup_j2^{j\al}\|[\dot\Delta_j\mathbb{P};
u\cdot\nabla]u\|_{L^1((t_0,t);L^2)}\lesssim \|\na
u\|_{L^2((t_0,t);L^2)}^{2-\al}\|\D u\|_{L^2((t_0,t);L^2)}^\al. \eeno
And it follows form the proof of \eqref{a.5} that \beno
\sup_j2^{j\al} \|[\dot\Delta_j\mathbb{P};\frac{1}{\rho}]\bigl(\D u
-\nabla\Pi\bigr)\|_{L^1((t_0,t);L^2)}\lesssim \sqrt{t-t_0}\
\|\r\|_{L^\infty((t_0,t);\dB^\al_{\infty,\infty})}\|\D
u-\na\Pi\|_{L^2((t_0,t);L^2)}. \eeno Therefore thanks to \eqref{p.3}
and \eqref{p.4}, we conclude that \beq \label{p.5}
\begin{split}
\|u\|_{\widetilde L^1((t_0,t);\dot B^{2+\al}_{2,\infty})}\leq&
C(m,M,\|u(t_0)\|_{H^1})\bigl(1+\sqrt{t-t_0}\
\|\r\|_{L^\infty((t_0,t);\dB^\al_{\infty,\infty})}\bigr)\\
\leq& C(m,M,\|u(t_0)\|_{H^1})\Bigl\{1+\sqrt{t-t_0}\
\|\r(t_0)\|_{\dB^\al_{\infty,\infty}}\exp\bigl(C\|u\|_{L^1((t_0,t);\dB^2_{2,1})}\bigr)\Bigr\}.
\end{split}
\eeq

Now for any positive integer $N$, we get, by applying
\eqref{dyadica},  that \beno
\begin{split}
\|u\|_{L^1((t_0,t);\dB^{2}_{2,1})}\leq&\sum_{\ell\leq
0}2^\ell\|\dot\D_\ell\na u\|_{L^1((t_0,t);L^2)}+\sum_{0<\ell\leq
N}\|\dot\D_\ell\D u\|_{L^1((t_0,t);L^2)}\\
&+\sum_{\ell\geq
N}2^{2\ell}\|\dot\D_\ell u\|_{L^1((t_0,t);L^2)}\\
\lesssim& \sqrt{t-t_0}\ \|\na
u\|_{L^2((t_0,t);L^2)}+\sqrt{N(t-t_0)}\ \|\D
u\|_{L^2((t_0,t);L^2)}\\
&+2^{-N\al}\|u\|_{\widetilde L^1((t_0,t);\dot
B^{2+\al}_{2,\infty})},
\end{split}
\eeno which together with \eqref{p.0},\eqref{p.3} and \eqref{p.5}
implies \beno
\begin{split}
\|u\|_{L^1((t_0,t);\dB^{2}_{2,1})}\leq&
C(m,M,\|\r(t_0)\|_{\dB^\al_{\infty,\infty}},\|u(t_0)\|_{H^1})\Bigl\{1\\
&\qquad+\sqrt{t-t_0}\bigl(1+\sqrt{N}+2^{-N\al}
\exp\bigl(C\|u\|_{L^1((t_0,t);\dB^2_{2,1})}\bigr)\bigr)\Bigr\}.
\end{split}
\eeno Taking
$N=\bigl[\f{C}{\al}\|u\|_{L^1((t_0,t);\dB^{2}_{2,1})}\bigr]$   in
the above inequality results in \beno
\begin{split}
\|u\|_{L^1((t_0,t);\dB^{2}_{2,1})}\leq&
C(m,M,\|\r(t_0)\|_{\dB^\al_{\infty,\infty}},\|u(t_0)\|_{H^1})\Bigl\{1+\sqrt{t-t_0}\bigl(1+\sqrt{\|u\|_{L^1((t_0,t);\dB^{2}_{2,1})}}\bigr)\Bigr\}\\
\leq&
C(m,M,\|\r(t_0)\|_{\dB^\al_{\infty,\infty}},\|u(t_0)\|_{H^1})(1+t-t_0)+\f12\|u\|_{L^1((t_0,t);\dB^{2}_{2,1})},
\end{split}
\eeno from which, we infer \beq\label{p.6}
\|u\|_{L^1((t_0,t);\dB^{2}_{2,1})}\leq
C(m,M,\|\r(t_0)\|_{\dB^\al_{\infty,\infty}},\|u(t_0)\|_{H^1})(1+t-t_0).
\eeq With \eqref{p.6}, it is standard to prove that $T^\ast$ given
at the beginning of the proof equals $\infty.$ This completes the
proof of the corollary.
\end{proof}

\renewcommand{\theequation}{\thesection.\arabic{equation}}
\setcounter{equation}{0}

\section{Proof of Theorem \ref{thm1.2}}

The goal of this section is to present the proof of Theorem
\ref{thm1.2}. To prove the  existence part of Theorem \ref{thm1.2},
we need the following two technical lemmas:

\begin{lem}\label{lemq.9}
{Let $p,q\geq 1$ and $s\in \R$ satisfying $\f1p\leq\f12+\f1q$ and
$\max\bigl(-1,2(\f1q-1)\bigr)<s<1+\f2q.$ Let $v\in \dB^s_{q,2}\cap
\dot{H}^1(\R^2)$ be a solenoidal vector filed.
 Then one
has \beq \label{q.20}
\begin{split}
 \| [\dot\Delta_j\mathbb{P}; v\cdot\nabla]v\|_{L^p}\lesssim
d_j2^{j(1+\f2q-\f2p-s)}\|\na v\|_{L^2}\|v\|_{\dB^{s}_{q,2}}.
\end{split} \eeq}
\end{lem}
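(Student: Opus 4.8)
The plan is to estimate the commutator $[\dot\Delta_j\mathbb{P}; v\cdot\nabla]v$ by first applying Bony's decomposition \eqref{bony} to the product $v\cdot\nabla v$, splitting it into paraproduct and remainder terms. Writing $v\cdot\nabla v = T_v\cdot\nabla v + T_{\nabla v}v + R(v,\nabla v)$ (up to the usual index bookkeeping), the commutator becomes
\beno
[\dot\Delta_j\mathbb{P}; v\cdot\nabla]v = [\dot\Delta_j\mathbb{P}; T_v\cdot\nabla]v + \dot\Delta_j\mathbb{P}\bigl(T_{\nabla v}v + R(v,\nabla v)\bigr) - \mathbb{P}R\bigl(v,\nabla\dot\Delta_j v\bigr),
\eeno
where the genuine commutator sits only in the low-high paraproduct piece $T_v\cdot\nabla$, while the high-high remainder terms are handled directly (the operator $\dot\Delta_j\mathbb{P}$ being a bounded multiplier). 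This mirrors exactly the structure used in the proof of Lemma \ref{lema.1.1}, so I would follow that template.

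\textbf{Estimating the paraproduct commutator.}
For the term $[\dot\Delta_j\mathbb{P}; T_v\cdot\nabla]v$, I would invoke the standard commutator estimate (Lemma 1 of \cite{Plan}, already cited in the proof of Lemma \ref{lema.1.1}), which gives
\beno
\|[\dot\Delta_j\mathbb{P}; T_v\cdot\nabla]v\|_{L^p} \lesssim \sum_{|j-\ell|\leq 4} \|\nabla \dot S_{\ell-1}v\|_{L^{a}}\|\dot\Delta_\ell v\|_{L^{b}}
\eeno
with Hölder exponents $\frac1p=\frac1a+\frac1b$. The natural choice is to place the low-frequency gradient factor in $L^2$ (so $\nabla \dot S_{\ell-1}v$ is controlled by $\|\nabla v\|_{L^2}$, exploiting $v\in\dot H^1$) and the localized high-frequency factor in $L^q$, which forces $\frac1p=\frac12+\frac1q$ — precisely the endpoint in the hypothesis, with the inequality $\frac1p\leq\frac12+\frac1q$ permitting the use of Bernstein's lemma to trade integrability for the needed frequency powers. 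Collecting the dyadic weight $2^{-js}$ from $\|\dot\Delta_\ell v\|_{L^q}\lesssim d_\ell 2^{-\ell s}\|v\|_{\dot B^s_{q,2}}$ and tracking the Bernstein factor $2^{2j(\frac1q-\frac1p)}$ produces the claimed exponent $2^{j(1+\frac2q-\frac2p-s)}$, and the $\ell^2$-summability of $(d_\ell)$ over $|j-\ell|\leq 4$ yields a single coefficient $d_j\in\ell^2$.

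\textbf{The remainder terms and the role of the index constraints.}
The remainder pieces $\dot\Delta_j\mathbb{P}R(v,\nabla v)$ and $\mathbb{P}R(v,\nabla\dot\Delta_j v)$ are treated as in Lemma \ref{lema.1.1}: using Lemma \ref{lem2.1}, one sums $\sum_{\ell\geq j-3}\|\dot\Delta_\ell v\|_{L^2}\|\dot S_{\ell+2}\nabla v\|_{L^q}$ (with a $2^{2j(\frac1q-\frac1p)}$ Bernstein correction to reach $L^p$), again producing the weight $2^{j(1+\frac2q-\frac2p-s)}$ and an $\ell^2$ coefficient. \textbf{The main obstacle I anticipate} is verifying that the summations over dyadic blocks converge — this is exactly where the two-sided restriction $\max(-1,2(\frac1q-1))<s<1+\frac2q$ enters. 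The upper bound $s<1+\frac2q$ guarantees geometric decay (hence summability) in the high-frequency remainder sums, while the lower bound $s>2(\frac1q-1)$ (equivalently the positivity of the relevant Bernstein exponent after Hölder) ensures the paraproduct's low-frequency partial sums $\dot S_{\ell-1}$ converge and that the operator is well-defined on $\dot B^s_{q,2}$; I would check each term's summability against these two thresholds carefully, since the endpoint Hölder choice $\frac1p=\frac12+\frac1q$ leaves no slack to absorb a borderline divergence.
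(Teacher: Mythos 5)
Your Bony decomposition of the commutator is exactly the paper's \eqref{q.7a}, but the H\"older split you then choose for the paraproduct commutator is the \emph{reverse} of the one the paper uses, and this reversal breaks the proof of the stated estimate. You put the low-frequency factor $\nabla\dot S_{\ell-1}v$ in $L^2$, bounding it brutally by $\|\nabla v\|_{L^2}$, and the block $\dot\Delta_\ell v$ in $L^q$, where it carries a coefficient $c_\ell\in\ell^2(\Z)$ (only square-summable, since $v\in\dB^s_{q,2}$ has third index $2$). This yields at best $c_j2^{j(1+\f2q-\f2p-s)}$ with $(c_j)\in\ell^2$ --- you even write ``$d_j\in\ell^2$'', but in this paper $(d_j)$ denotes a generic element of $\ell^1(\Z)$, and the $\ell^1$ property is precisely what the lemma must deliver: in \eqref{q.2a}--\eqref{q.3} the bound \eqref{q.20} is multiplied by $2^{j(-1+\f2p)}$ and summed over $j\in\Z$ to control a $\dB^{-1+\f2p}_{p,1}$ norm, which requires $\sum_jd_j<\infty$; an $\ell^2$ coefficient is useless there. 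Your split cannot be repaired to give $\ell^1$, because the partial sum $\|\nabla\dot S_{\ell-1}v\|_{L^2}\leq\sum_{k\leq\ell-2}\|\dot\Delta_k\nabla v\|_{L^2}$ carries no geometric weight, hence no decaying coefficient can be extracted from it. The paper's split is forced by this: for $1<p\le2$ the low-frequency factor goes in $L^{\bar p}$ with $\f1{\bar p}=\f1p-\f12$ (this is where $\f1p\le\f12+\f1q$ enters, so Bernstein from $L^q$ applies), giving $\|\dot S_{\ell-1}\nabla v\|_{L^{\bar p}}\lesssim c_\ell 2^{\ell[2(1+\f1q-\f1p)-s]}\|v\|_{\dB^s_{q,2}}$ thanks to the upper bound on $s$, while the block goes in $L^2$, $\|\dot\Delta_\ell\nabla v\|_{L^2}\lesssim c_\ell\|\nabla v\|_{L^2}$; the product of the \emph{two} $\ell^2$ coefficients is the $\ell^1$ coefficient $d_\ell$ (a separate argument, with $L^\infty\times L^p$, handles $p>2$).

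The same defect infects your remainder estimate: you pair the block $\dot\Delta_\ell v$ with the partial sum $\dot S_{\ell+2}\nabla v$, copying the template of Lemma \ref{lema.1.1} --- but that lemma targets a $\dB^{1+\f2q+\e}_{q,\infty}$ norm (a supremum over $j$), where partial-sum bounds suffice. Here one must pair block with block, $\|\dot\Delta_\ell v\|\,\|\wt{\dot\Delta}_\ell v\|$, as in \eqref{q.9a}--\eqref{q.9b}, so that again two $\ell^2$ coefficients multiply into a $d_\ell$, and one must distinguish $q\le 2$ from $q\ge 2$ (Bernstein in opposite directions), which is exactly where the two branches of the lower bound $\max\bigl(-1,2(\f1q-1)\bigr)$ come from --- your proposal addresses neither point. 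Moreover, your attribution of the hypotheses is backwards: the upper bound on $s$ is what makes the low-frequency (paraproduct) sums $\sum_{k\le\ell-1}$ converge, while the lower bound is what makes the high-frequency (remainder) sums $\sum_{\ell\ge j-3}$ converge and remain $\ell^1$ in $j$, not the other way around. (A smaller slip: the Bernstein factor in your paraproduct term should be $2^{2j(\f12+\f1q-\f1p)}$, not $2^{2j(\f1q-\f1p)}$; with the factor as you wrote it, the claimed exponent $1+\f2q-\f2p-s$ does not come out.)
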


\begin{proof} We get, by using Bony's
decomposition \eqref{bony}, that \beq \label{q.7a}
[\dot\Delta_j\mathbb{P}; v\cdot\nabla]v=[\dot\Delta_j\mathbb{P};
T_v\cdot\nabla]v+\dot\D_j\mathbb{P}\bigl(T_{\na v} v+\cR(v,\na
v))-T_{\na\dot\D_jv}v-\cR(v,\na\dot\D_jv). \eeq It is easy to check
that \beno [\dot\Delta_j\mathbb{P};
T_v\cdot\nabla]v(x)=2^{2j}\sum_{|\ell-j|\leq
4}\int_0^1\int_{\R^2}h(2^jz)z\cdot \dot S_{\ell-1}\na v(x+(\theta-1)
z)\dot\D_\ell\na v(x-z)\,dz\,d\theta. \eeno We first deal with the
case when $1<p\leq 2$ in \eqref{q.20}. In this case, applying
H\"older's inequality and the property of the translation invariance
of the Lebesgue measure, we obtain
 \beno
 \begin{split} \|[\dot\Delta_j\mathbb{P};
T_v\cdot\nabla]v\|_{L^p}\leq& 2^{j}\sum_{|\ell-j|\leq
4}\int_0^1\int_{\R^2}|h_1(2^jz)|\| \dot S_{\ell-1}\na v(\cdot+\theta
z)\|_{L^{\bar{p}}}\|\dot\D_\ell\na v(\cdot-z)\|_{L^2}\,dzd\theta\\
\lesssim& 2^{-j}\sum_{|\ell-j|\leq 4}\|\dot S_{\ell-1}\na
v\|_{L^{\bar{p}}}\|\dot\D_\ell\na v\|_{L^2},\end{split} \eeno where
$h_1(z)=zh(z)$ and $\bar{p}$ satisfies $\f1p=\f12+\f1{\bar{p}}.$ As
$\f1p\leq\f12+\f1q$ and $s<1+\f2q,$ we deduce, from Lemma
\ref{lem2.1}, that \beno \|\dot S_{\ell-1}\na
v\|_{L^{\bar{p}}}\lesssim
\sum_{k\leq\ell-1}2^{2k(\f1q-\f1{\bar{p}})}\|\dot\D_k\na
v\|_{L^q}\lesssim c_\ell 2^{\ell[2(1+\f1q-\f1p)-s]}\|
v\|_{\dB^s_{q,2}} \eeno so that one has \beq \label{q.8}
\|[\dot\Delta_j\mathbb{P}; T_v\cdot\nabla]v\|_{L^p}\lesssim
d_j2^{j(1+\f2q-\f2p-s)}\|\na v\|_{L^2}\|v\|_{\dB^{s}_{q,2}}.  \eeq
Along the same line, we have \beq \label{q.9}
\begin{split} &\|\dot\D_j\mathbb{P}\bigl(T_{\na v}
v\bigr)\|_{L^p}\leq\sum_{|\ell-j|\leq 4}\|\dot S_{\ell-1}\na
v\|_{L^{\bar{p}}}\|\dot\D_\ell v\|_{L^2}\lesssim
d_j2^{j(1+\f2q-\f2p-s)}\|\na v\|_{L^2}\|v\|_{\dB^{s}_{q,2}}.
\end{split}\eeq
The same estimate holds for $T_{\na\dot\D_jv}v.$

Whereas for $1<q\leq 2,$ we get, by applying
$\dv\, v=0$ and Lemma
\ref{lem2.1}, that \beq\label{q.9a}
\begin{split}
\|\dot\D_j\mathbb{P}&(\cR(v,\na v))\|_{L^p}\lesssim
2^{j(3-\f2p)}\sum_{\ell\geq j-3}\|\dot\D_\ell
v\|_{L^2}\|\wt{\dot\D}_\ell v\|_{L^2}\\
 &\lesssim2^{j(3-\f2p)}\sum_{\ell\geq j-3} d_\ell
 2^{\ell(\f2q-s-2)}\|\na v\|_{L^2}\|v\|_{\dB^{s}_{q,2}}
 \lesssim d_j2^{j(1+\f2q-\f2p-s)}\|\na
 v\|_{L^2}\|v\|_{\dB^{s}_{q,2}},
\end{split}\eeq
and for $q\geq 2,$  we have \beq\label{q.9b} \begin{split}
\|\dot\D_j\mathbb{P}&(\cR(v,\na v))\|_{L^p}\lesssim
2^{j[2(1+\f1q-\f1p)]}\sum_{\ell\geq j-3}\|\dot\D_\ell
u\|_{L^{q}}\|\wt{\dot\D}_\ell v\|_{L^2}\\
&\lesssim 2^{j[2(1+\f1q-\f1p)]}\sum_{\ell\geq j-3}d_\ell
2^{-\ell(1+s)}\|\na v\|_{L^2}\|v\|_{\dB^{s}_{q,2}} \lesssim
d_j2^{j(1+\f2q-\f2p-s)}\|\na
 v\|_{L^2}\|v\|_{\dB^{s}_{q,2}},
\end{split}
\eeq  where we used the fact that $s>\max\bigl(-1,2(\f1q-1)\bigr).$
This together with \eqref{q.8} and \eqref{q.9} proves \eqref{q.20}
for $1<p\leq 2.$

The case when $2<p$ is much easier. Notice that \beno
 \|[\dot\Delta_j\mathbb{P};
T_v\cdot\nabla]v\|_{L^p} \lesssim 2^{-j}\sum_{|\ell-j|\leq 4}\|\dot
S_{\ell-1}\na v\|_{L^\infty}\|\dot\D\na v\|_{L^p}, \eeno and as
$s<1+\f2q,$ one has \beno \|\dot S_{\ell-1}\na
v\|_{L^\infty}\lesssim c_\ell
2^{\ell(1+\f2q-s)}\|v\|_{\dB^{s}_{q,2}}, \eeno so that \eqref{q.8}
holds for $p>2.$ The same estimate holds for
$\dot\D_j\mathbb{P}\bigl(T_{\na v} v\bigr)$ and $T_{\na\dot\D_jv}v.$
This together with \eqref{q.9a} and \eqref{q.9b}  completes the
proof of \eqref{q.20} for $2<p.$
\end{proof}

\begin{lem}\label{lemq.10}
{Let $p\geq 1$ and $s>-1.$ Let $v\in
\dB^{1+s}_{p,1}\cap\dB^{2+s}_{p,1}\cap H^1(\R^2)$ be a solenoidal
vector filed.
 Then one
has \beq \label{q.9c}
\begin{split}
 \|v\cdot\na v\|_{\dot{B}^s_{p,1}}\lesssim
\|v\|_{L^2}\|v\|_{\dB^{2+s}_{p,1}}+\|\na
v\|_{L^2}\|v\|_{\dB^{1+s}_{p,1}}.
\end{split} \eeq}
\end{lem}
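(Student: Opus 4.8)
The plan is to estimate $v\cdot\na v=v^k\pa_k v$ by Bony's decomposition \eqref{bony}, writing
\[
v\cdot\na v=T_{v^k}\pa_k v+T_{\pa_k v}v^k+\cR(v^k,\pa_k v),
\]
and to show that the two paraproducts produce, respectively, the terms $\|v\|_{L^2}\|v\|_{\dB^{2+s}_{p,1}}$ and $\|\na v\|_{L^2}\|v\|_{\dB^{1+s}_{p,1}}$ on the right-hand side of \eqref{q.9c}, while the remainder $\cR(v^k,\pa_k v)$ is, after exploiting $\dv\,v=0$, absorbed into the first of these. The hypotheses $s>-1$ and the solenoidal condition enter only through the remainder, so that is where the real work lies.

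For the paraproducts I first record the two-dimensional Bernstein inequality (Lemma \ref{lem2.1}), $\|\dot\D_k w\|_{L^\infty}\lesssim 2^k\|\dot\D_k w\|_{L^2}$, whence by Cauchy--Schwarz $\|\dot S_{m-1}v\|_{L^\infty}\lesssim\sum_{k\le m-2}2^k\|\dot\D_k v\|_{L^2}\lesssim 2^m\|v\|_{L^2}$ and, likewise, $\|\dot S_{m-1}\na v\|_{L^\infty}\lesssim 2^m\|\na v\|_{L^2}$. Since $\dot\D_j(T_{v^k}\pa_kv)=\sum_{|m-j|\le4}\dot\D_j\bigl(\dot S_{m-1}v^k\,\dot\D_m\pa_kv\bigr)$, this gives $\|\dot\D_j(T_{v^k}\pa_kv)\|_{L^p}\lesssim\sum_{|m-j|\le4}2^m\|v\|_{L^2}\,2^m\|\dot\D_m v\|_{L^p}$; multiplying by $2^{js}$ and summing the almost-diagonal series yields $\|T_{v^k}\pa_kv\|_{\dB^s_{p,1}}\lesssim\|v\|_{L^2}\|v\|_{\dB^{2+s}_{p,1}}$. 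The same computation with the roles of the two factors swapped, using the $\na v$ bound above, gives $\|T_{\pa_k v}v^k\|_{\dB^s_{p,1}}\lesssim\|\na v\|_{L^2}\|v\|_{\dB^{1+s}_{p,1}}$. No restriction on $s$ is needed for these two.

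For the remainder I use the divergence-free condition to transfer one derivative outside: since $\sum_k\pa_kv^k=0$ and $\cR$ is bilinear, summation over $k$ gives $\sum_k\cR(v^k,\pa_kv)=\sum_k\pa_k\cR(v^k,v)$, so that $\|\cR(v^k,\pa_kv)\|_{\dB^s_{p,1}}\lesssim\|\cR(v^k,v)\|_{\dB^{1+s}_{p,1}}$. Writing $\cR(v^k,v)=\sum_{\ell\ge j-N_0}\dot\D_\ell v^k\,\wt{\dot\D}_\ell v$, I estimate the high--high interaction by placing one factor in $L^2$: for $1\le p\le2$, Hölder with $\f1p=\f12+\f1{p_1}$ together with Bernstein gives $\|\dot\D_j\cR(v^k,v)\|_{L^p}\lesssim\sum_{\ell\ge j-N_0}\|\dot\D_\ell v\|_{L^2}\,2^\ell\|\dot\D_\ell v\|_{L^p}$, whereas for $p>2$ one places both factors in $L^2$ and recovers $L^p$ through the low-frequency localization of $\dot\D_j$, with the analogous outcome (this mild case split is already present in the proof of Lemma \ref{lemq.9}). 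In either case, multiplying by $2^{j(1+s)}$ and exchanging the order of summation reduces matters to the inner geometric sum $\sum_{j\le\ell+N_0}2^{j(1+s)}\lesssim 2^{\ell(1+s)}$, which converges precisely because $1+s>0$; this leaves $\sum_\ell 2^{\ell(2+s)}\|\dot\D_\ell v\|_{L^p}\|\dot\D_\ell v\|_{L^2}\lesssim\|v\|_{L^2}\|v\|_{\dB^{2+s}_{p,1}}$.

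The main obstacle is thus the remainder term. Without $\dv\,v=0$, the two derivatives in $\cR(v^k,\pa_kv)$ would sit on the product and force the stronger restriction $s>0$ in the low-frequency summation; it is exactly the integration by parts permitted by the solenoidal condition that lowers the threshold to the stated $s>-1$. Collecting the estimates for the two paraproducts and the remainder yields \eqref{q.9c}.
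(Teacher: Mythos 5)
Your decomposition and your treatment of the two paraproducts are correct, and in fact slightly more robust than the paper's: the paper pairs $\dot S_{\ell-1}v\in L^2$ against $\dot\D_\ell\na v\in L^{\bar p}$ with $\f1p=\f12+\f1{\bar p}$, a H\"older pairing that is only formally available for $p\le 2$, whereas your $L^\infty$--$L^p$ pairing via Bernstein, $\|\dot S_{m-1}v\|_{L^\infty}\lesssim 2^m\|v\|_{L^2}$, works uniformly for all $p\ge 1$. Your use of $\dv\,v=0$ to rewrite $\sum_k\cR(v^k,\pa_kv)=\sum_k\pa_k\cR(v^k,v)$, followed by the convolution estimate whose convergence rests on $1+s>0$, is exactly the paper's mechanism for the remainder (the $2^j$ pulled out in front of the sum there), and your case $1\le p\le2$ is sound.

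The gap is the branch $p>2$ of the remainder, which genuinely matters since the lemma is invoked with $p\in(1,4)$. Putting both factors in $L^2$ and recovering $L^p$ by Bernstein on $\dot\D_j$ gives $\|\dot\D_j\cR(v^k,v)\|_{L^p}\lesssim 2^{2j(1-\f1p)}\sum_{\ell\ge j-N_0}\|\dot\D_\ell v\|_{L^2}\|\wt{\dot\D}_\ell v\|_{L^2}$, but now the high-frequency factor sits in $L^2$, and for $p>2$ there is no inequality $\|\wt{\dot\D}_\ell v\|_{L^2}\lesssim 2^{\ell\alpha}\|\wt{\dot\D}_\ell v\|_{L^p}$ (Bernstein only raises the Lebesgue exponent), so this sum cannot be converted into the needed $d_\ell\,2^{-\ell(2+s)}\|v\|_{\dB^{2+s}_{p,1}}$. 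You have in fact swapped the two branches of Lemma \ref{lemq.9}: there, both-factors-in-$L^2$ is the device for the \emph{small}-exponent case \eqref{q.9a}, while the large-exponent case \eqref{q.9b} keeps one factor in the base Lebesgue space. The fix is that pairing: estimate $\|\dot\D_\ell v\,\wt{\dot\D}_\ell v\|_{L^r}\le\|\dot\D_\ell v\|_{L^2}\|\wt{\dot\D}_\ell v\|_{L^p}$ with $\f1r=\f12+\f1p\in\bigl[\f12,1\bigr]$, so Bernstein on $\dot\D_j$ from $L^r$ to $L^p$ costs exactly $2^{2j(\f1r-\f1p)}=2^j$; together with the $2^j$ from $\pa_k$ this yields $2^{js}\|\dot\D_j\pa_k\cR(v^k,v)\|_{L^p}\lesssim 2^{j(2+s)}\sum_{\ell\ge j-N_0}d_\ell\,2^{-\ell(2+s)}\|v\|_{L^2}\|v\|_{\dB^{2+s}_{p,1}}$, and the convolution converges since $2+s>0$, which follows from $s>-1$. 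With that one repair your argument is complete and coincides in substance with the paper's proof.
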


\begin{proof}
Bony's decomposition \eqref{bony} for $v\cdot\na v$ reads  \beno
v\cdot\na v=T_v\cdot\na v+T_{\na v}\cdot v+\cR(v, \na v). \eeno
Applying Lemma \ref{lem2.1} yields \beno
\begin{split}\|\dot\D_j(T_v\cdot\na v)\|_{L^p}\lesssim&
\sum_{|\ell-j|\leq 4}\|\dot S_{\ell-1}v\|_{L^2}\|\dot\D_\ell \na
v\|_{L^{\bar{p}}} \lesssim
d_j2^{-js}\|v\|_{L^2}\|v\|_{\dB^{2+s}_{p,1}},
\end{split}
\eeno where
 $\bar{p}$ satisfies $\f1p=\f12+\f1{\bar{p}}.$

 A similar procedure gives rise to
\beno \begin{split}\|\dot\D_j(T_{\na v}\cdot v)\|_{L^p}\lesssim&
\sum_{|\ell-j|\leq 4}\|\dot S_{\ell-1}\na v\|_{L^2}\|\dot\D_\ell
v\|_{L^{\bar{p}}} \lesssim d_j2^{-js}\|\na
v\|_{L^2}\|v\|_{\dB^{1+s}_{p,1}}.
\end{split}
\eeno Finally as $s>-1,$ by applying $\dv\, v=0$ and  Lemma
\ref{lem2.1}, we get
$$
\|\dot\D_j(\cR(v,\na v)\|_{L^p}\lesssim
2^j \sum_{\ell\geq j-3}\|\dot\D_{\ell} v\|_{L^2}\|\wt{\dot\D}_\ell
v\|_{L^{\bar{p}}} \lesssim d_j2^{-js}\|
v\|_{L^2}\|v\|_{\dB^{2+s}_{p,1}}.
$$
This completes the proof of
\eqref{q.9c}.
\end{proof}

\begin{proof}[Proof  to the existence part of  Theorem
\ref{thm1.2}]  Given initial data $(\r_0,u_0)$ satisfying the
assumptions of Theorem \ref{thm1.2}, we deduce  from \cite{abidi}
that \eqref{1.1} has a  local  solution $(\r,u)$ on $[0, T^\ast)$ so
that \beq\label{loc} \begin{split} & \r-1 \in {\mathcal{C}_{b}}([0,
T]; \dot B^{\frac{2}{p}}_{p,1}(\R^2)), \quad u\in
{\mathcal{C}_{b}}([0, T]; \dot B^{-1+\frac{2}{p}}_{p,1}(\R^2)) \cap
L^1([0,T]; \,\dot B^{1+\frac{2}{p}}_{p,1}(\R^2))\quad\mbox{and}\\
&\f12\|\sqrt{\r}u(T)\|_{L^2}^2+\int_0^T\int_{\R^2}\mu(\r)d(u):d(u)\,dx\,dt'=\f12
\|\sqrt{\r_0}u_0\|_{L^2}^2 \end{split} \eeq for any $T<T^\ast.$

In order to prove that $T^\ast=\infty$ under the nonlinear smallness
condition \eqref{th1.2a}, we write \beq\label{q.2}
\pa_tu+(u\cdot\nabla)u-\Delta
u+\nabla\Pi=(1-\r)\pa_tu+(1-\r)(u\cdot\nabla)u +\dv
\bigl[2(\mu(\rho)-1) d\bigr], \eeq from which and similar derivation
of \eqref{a.3}, we deduce
 for $p\in (1,4)$ and $t\in (0,T^\ast)$ that
\beq\label{q.2a}
\begin{split}
\|u\|_{\widetilde L^\infty_t(\dot B^{-1+\frac{2}{p}}_{p,1})}
&+c\|u\|_{L^1_t(\dot B^{1+\frac{2}{p}}_{p,1})}
 \leq \|u_0\|_{\dot
B^{-1+\frac{2}{p}}_{p,1}} +\sum_{j\in\Z}2^{j(-1+\frac{2}{p})}
\|[\dot\Delta_j\mathbb{P};u\cdot\nabla]u\|_{L^1_t(L^p)}\\
&+\bigl\|\bigl\{(1-\r)\pa_tu+(1-\r)(u\cdot\nabla)u +\dv
\bigl[2(\mu(\rho)-1) d\bigr]\bigr\}\bigr\|_{L^1_t(\dot
B^{1+\frac{2}{p}}_{p,1})}.
\end{split}
\eeq Applying product laws in Besov spaces (\cite{BCD}) yields
\beq\label{pqur}
\begin{aligned}
\|u&\|_{\widetilde L^\infty_t(\dot B^{-1+\frac{2}{p}}_{p,1})}
+c\|u\|_{L^1_t(\dot B^{1+\frac{2}{p}}_{p,1})}\leq \|u_0\|_{\dot
B^{-1+\frac{2}{p}}_{p,1}} +\sum_{j\in\Z}2^{j(-1+\frac{2}{p})}
\|[\dot\Delta_j\mathbb{P};u\cdot\nabla]u\|_{L^1_t(L^p)}\\
&\qquad+C\|\r-1\|_{\widetilde L^\infty_t(\dot
B^{\frac{2}{p}}_{p,1})} \Bigl\{\|\pa_tu\|_{L^1_t(\dot
B^{-1+\frac{2}{p}}_{p,1})} +\|(u\cdot\nabla)u\|_{L^1_t(\dot
B^{-1+\frac{2}{p}}_{p,1})} +\|u\|_{L^1_t(\dot
B^{1+\frac{2}{p}}_{p,1})}\Bigr\}.
\end{aligned}
\eeq However, it following Lemma \ref{lemq.9} and \eqref{loc} that
for $1<p<4,$ \beq\label{q.3}
\begin{split}
\|[\dot\Delta_j\mathbb{P};u\cdot\nabla]u\|_{L^1_t(L^p)}
\lesssim&d_j2^{j(1-\f2p)}\|\na u\|_{L^2_t(L^2)}^2 \lesssim
d_j2^{j(1-\f2p)} \|u_0\|_{L^2}^2, \end{split} \eeq and Lemma
\ref{lemq.10} together with \eqref{loc} ensures that \beq\label{q.4}
\begin{split}
\|(u\cdot\nabla)u\|_{L^1_t(\dot B^{-1+\frac{2}{p}}_{p,1})} \lesssim
& \|u\|_{L^\infty_t(L^2)}\|u\|_{L^1_t(\dB^{1+\f2p}_{p,1})}+\|\na
u\|_{L^2_t(L^2)}\|u\|_{L^2_t(\dB^{\f2p}_{p,1})}\\
\lesssim & \|u_0\|_{L^2}\bigl(\|u\|_{\widetilde L^\infty_t(\dot
B^{-1+\frac{2}{p}}_{p,1})}+ \|u\|_{L^1_t(\dot
B^{1+\frac{2}{p}}_{p,1})}\bigr).\end{split} \eeq Substituting
\eqref{q.3} and \eqref{q.4} into \eqref{pqur} results in \beq
\label{q.5}
\begin{split}
\|u&\|_{\widetilde L^\infty_t(\dot B^{-1+\frac{2}{p}}_{p,1})}
+c\|u\|_{L^1_t(\dot B^{1+\frac{2}{p}}_{p,1})}
 \leq  \|u_0\|_{\dot
B^{-1+\frac{2}{p}}_{p,1}} +C\|u_0\|_{L^2}^2\\
&\quad+C(1+\|u_0\|_{L^2})\|\r-1\|_{\widetilde L^\infty_t(\dot
B^{\frac{2}{p}}_{p,1})} \Bigl\{\f{\|\pa_tu\|_{L^1_t(\dot
B^{-1+\frac{2}{p}}_{p,1})}}{1+\|u_0\|_{L^2}} +\|u\|_{\widetilde
L^\infty_t(\dot B^{-1+\frac{2}{p}}_{p,1})} +\|u\|_{L^1_t(\dot
B^{1+\frac{2}{p}}_{p,1})}\Bigr\}.
\end{split}
\eeq Whereas we infer from \eqref{q.2} and \eqref{q.4} that \beq
\label{q.6}
\begin{split} \|\p_tu&\|_{\widetilde L^\infty_t(\dot
B^{-1+\frac{2}{p}}_{p,1})} +\|\na\Pi\|_{L^1_t(\dot
B^{-1+\frac{2}{p}}_{p,1})}
 \leq  \|u_0\|_{\dot
B^{-1+\frac{2}{p}}_{p,1}}+C\|u_0\|_{L^2}\bigl(\|u\|_{\widetilde
L^\infty_t(\dot B^{-1+\frac{2}{p}}_{p,1})} +\|u\|_{L^1_t(\dot
B^{1+\frac{2}{p}}_{p,1})}\bigr)\\
&+C(1+\|u_0\|_{L^2})\|\r-1\|_{\widetilde L^\infty_t(\dot
B^{\frac{2}{p}}_{p,1})} \Bigl\{\f{\|\pa_tu\|_{L^1_t(\dot
B^{-1+\frac{2}{p}}_{p,1})}}{1+\|u_0\|_{L^2}} +\|u\|_{\widetilde
L^\infty_t(\dot B^{-1+\frac{2}{p}}_{p,1})} +\|u\|_{L^1_t(\dot
B^{1+\frac{2}{p}}_{p,1})}\Bigr\}.
\end{split}
\eeq For $\e$ sufficiently small, we denote \beq\label{q.6a}
\frak{A}(t)\eqdefa\|u\|_{\widetilde L^\infty_t(\dot
B^{-1+\frac{2}{p}}_{p,1})}+\|u\|_{L^1_t(\dot
B^{1+\frac{2}{p}}_{p,1})}
+\f{\e}{1+\|u_0\|_{L^2}}\bigl(\|\pa_tu\|_{L^1_t(\dot
B^{-1+\frac{2}{p}}_{p,1})}+\|\nabla\Pi\|_{L^1_t(\dot
B^{-1+\frac{2}{p}}_{p,1})}\bigr). \eeq Then by summing \eqref{q.5}
with $\f{\e}{1+\|u_0\|_{L^2}}\times$\eqref{q.6}, and  using the
following standard estimate on transport equation \cite{BCD} that
$$
\|\r-1\|_{\widetilde L^\infty_t(\dot B^{\frac{2}{p}}_{p,1})} \leq
\|\r_0-1\|_{\dot B^{\frac{2}{p}}_{p,1}} \exp\bigl(C\|u\|_{L^1_t(\dot
B^{1+\frac{2}{p}}_{p,1})}\bigr),
$$
we obtain \beq\label{q.7} \frak{A}(t) \le C\bigl\{\|u_0\|_{\dot
B^{-1+\frac{2}{p}}_{p,1}}+\|u_0\|_{L^2}^2
+\frak{A}(t)(1+\|u_0\|_{L^2})\|\r_0-1\|_{\dot B^{\frac{2}{p}}_{p,1}}
e^{C\frak{A}(t)}\bigr\}. \eeq In particular if we take $\e_0$ to be
sufficiently small and $C_0$ to be sufficiently large in
\eqref{th1.2a}, one has
$$
C\|\r_0-1\|_{\dot B^{\frac{2}{p}}_{p,1}}\exp\{2C(\|u_0\|_{\dot
B^{-1+\frac{2}{p}}_{p,1}}+\|u_0\|_{L^2}^2)\} \le \frac{1}{4},
$$
which together with \eqref{q.7} ensures that
$$
\frak{A}(t) \leq 2C\bigl(\|u_0\|_{\dot
B^{-1+\frac{2}{p}}_{p,1}}+\|u_0\|_{L^2}^2\bigr)\quad \mbox{for
all}\quad t\in (0, T^\ast).
$$
This in turn proves that $T^\ast=\infty$ under the assumption of
\eqref{th1.2a}, which completes the proof to the existence part of
Theorem \ref{thm1.2}.\end{proof}

To prove the uniqueness part of Theorem \ref{thm1.2}, we first
recall  the following Lemma  from \cite{DM1} (see Proposition 2.1 of
\cite{DM1}):

\begin{lem} \label{lemq.1} {\sl Let $v_0\in \dB^s_{p,1}(\R^2)$ and
$f\in L^1((0,T);\dB^s_{p,1}(\R^2))$ with $p\in [1,\infty]$ and
$s\in\R.$ Let $g, R$ satisfy $\na g\in
L^1((0,T);\dB^s_{p,1}(\R^2)),$  $\p_tR\in
L^1((0,T);\dB^s_{p,1}(\R^2))$ and that the compatibility condition
$g|_{t=0}=\dv v_0$ on $\R^2$. Then the system
\beno\left\{\begin{array}{l} \displaystyle \pa_tv-\Delta v
+\grad Q=f, \\
\displaystyle\dv\, v=g=\dv R, \\
\displaystyle v|_{t=0}=v_0
\end{array}\right.
\eeno has a unique solution $(v, \na Q)$ so that
\beq\label{q.1}\begin{split} \|v\|_{\widetilde L^\infty_t(\dot
B^{s}_{p,1})} +&\|(\partial_tv,\nabla^2v,\nabla Q)\|_{L^1_t(\dot
B^{s}_{p,1})}\\
& \lesssim \|v_0\|_{\dot B^{s}_{p,1}} +\|f\|_{L^1_t(\dot
B^{s}_{p,1})} +\|\na g\|_{L^1_t(\dot B^{s}_{p,1})}
+\|\pa_tR\|_{L^1_t(\dot B^{s}_{p,1})}. \end{split} \eeq}
\end{lem}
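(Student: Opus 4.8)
The plan is to decouple the Stokes system by the Helmholtz--Leray decomposition $v=\mathbb{P}v+\mathcal{Q}v$, where $\mathcal{Q}=\nabla\Delta^{-1}\dv$ and $\mathbb{P}=I-\mathcal{Q}$. The compressible part is \emph{explicit}: since $\dv v=g=\dv R$, one has $\mathcal{Q}v=\nabla\Delta^{-1}\dv v=\nabla\Delta^{-1}\dv R=\mathcal{Q}R$, so $\mathcal{Q}v$ is determined entirely by the data. The solenoidal part $w\eqdefa\mathbb{P}v$ satisfies a pure heat equation: applying $\mathbb{P}$ to the momentum equation kills $\nabla Q$ and commutes with $\Delta$, giving $\partial_t w-\Delta w=\mathbb{P}f$ with $w|_{t=0}=\mathbb{P}v_0$. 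Thus the two pieces can be estimated separately and the pressure recovered at the end.

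First I would estimate $\mathcal{Q}v=\mathcal{Q}R$. The operators $\mathcal{Q}$ and $\nabla\otimes\nabla\,\Delta^{-1}$ are of order zero, hence bounded on $\dot B^s_{p,1}(\R^2)$ by the boundedness of Riesz transforms on homogeneous Besov spaces recalled in Remark \ref{rmk1.1} (this is exactly where the Besov setting beats $L^p$, since it covers $p=1,\infty$). Therefore $\|\partial_t\mathcal{Q}v\|_{L^1_t(\dot B^s_{p,1})}=\|\mathcal{Q}\partial_t R\|_{L^1_t(\dot B^s_{p,1})}\lesssim\|\partial_t R\|_{L^1_t(\dot B^s_{p,1})}$, and writing $\nabla^2\mathcal{Q}v=(\nabla\otimes\nabla\,\Delta^{-1})(\nabla g)$ gives $\|\nabla^2\mathcal{Q}v\|_{L^1_t(\dot B^s_{p,1})}\lesssim\|\nabla g\|_{L^1_t(\dot B^s_{p,1})}$. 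The delicate point is the time-$L^\infty$ control of $\mathcal{Q}v$ itself, since the naive bound through the order $-1$ operator $\nabla\Delta^{-1}$ would only give $\|g\|_{\dot B^{s-1}_{p,1}}$, a norm not appearing on the right-hand side. Instead I would use the fundamental theorem of calculus together with the compatibility condition: frequency by frequency, $\dot\Delta_j\mathcal{Q}v(t)=\dot\Delta_j\mathcal{Q}R(0)+\int_0^t\dot\Delta_j\mathcal{Q}\partial_t R\,dt'$, and $\mathcal{Q}R(0)=\nabla\Delta^{-1}\dv v_0=\mathcal{Q}v_0$ because $g|_{t=0}=\dv v_0$. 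Taking $L^\infty_t(L^p)$, multiplying by $2^{js}$ and summing yields $\|\mathcal{Q}v\|_{\widetilde L^\infty_t(\dot B^s_{p,1})}\lesssim\|v_0\|_{\dot B^s_{p,1}}+\|\partial_t R\|_{L^1_t(\dot B^s_{p,1})}$.

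Next I would estimate the solenoidal part through the maximal regularity of the heat semigroup in Chemin--Lerner spaces: applying $\dot\Delta_j$ to $\partial_t w-\Delta w=\mathbb{P}f$, solving the resulting ODE and exploiting the decay $e^{-ct2^{2j}}$ with Young's inequality in time gives the standard estimate (see \cite{BCD}) $\|w\|_{\widetilde L^\infty_t(\dot B^s_{p,1})}+\|(\partial_t w,\nabla^2 w)\|_{L^1_t(\dot B^s_{p,1})}\lesssim\|\mathbb{P}v_0\|_{\dot B^s_{p,1}}+\|\mathbb{P}f\|_{L^1_t(\dot B^s_{p,1})}$, which is controlled by $\|v_0\|_{\dot B^s_{p,1}}+\|f\|_{L^1_t(\dot B^s_{p,1})}$ since $\mathbb{P}$ is bounded. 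Adding the two contributions bounds $\|v\|_{\widetilde L^\infty_t(\dot B^s_{p,1})}$ and $\|(\partial_t v,\nabla^2 v)\|_{L^1_t(\dot B^s_{p,1})}$. The pressure is then recovered directly from $\nabla Q=f-\partial_t v+\Delta v$; applying $\mathbb{P}$ shows this right-hand side is a pure gradient, so $Q$ is well defined, and its $L^1_t(\dot B^s_{p,1})$ norm is dominated by the terms already estimated.

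For existence I would simply \emph{define} $v\eqdefa w+\mathcal{Q}R$ with $w$ the Duhamel solution of the heat equation above, and then check that $\dv v=\dv R=g$, that $v|_{t=0}=\mathbb{P}v_0+\mathcal{Q}v_0=v_0$ (using compatibility), and that $\nabla Q\eqdefa f-\partial_t v+\Delta v$ is a gradient; uniqueness follows because a solution with zero data has $\mathcal{Q}v=0$ and $\mathbb{P}v$ solving the homogeneous heat equation, hence $v\equiv 0$ and then $\nabla Q\equiv 0$. The main obstacle, and the crux of the whole argument, is the $\widetilde L^\infty_t(\dot B^s_{p,1})$ bound for the compressible part: the remedy is precisely the identity $\mathcal{Q}v=\mathcal{Q}R$ combined with integrating $\partial_t R$ in time and the compatibility condition $g|_{t=0}=\dv v_0$, which together trade the uncontrolled quantity $g$ for the admissible data $v_0$ and $\partial_t R$.
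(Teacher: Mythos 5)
The paper itself offers no proof of this lemma — it is recalled verbatim from Danchin–Mucha \cite{DM1} (Proposition 2.1) — but your argument is correct and is essentially the proof given in that reference: the Helmholtz decomposition reduces the system to an explicit potential part $\mathcal{Q}v=\mathcal{Q}R$ (whose $\widetilde L^\infty_t(\dot B^s_{p,1})$ bound indeed requires exactly your use of the compatibility condition $g|_{t=0}=\dv v_0$ together with integrating $\partial_t R$ in time), a heat equation with maximal smoothing for $\mathbb{P}v$, and the pressure read off from the equation, all resting on the boundedness of the zero-order projectors on homogeneous Besov spaces for every $p\in[1,\infty]$. Your construction of the solution and the uniqueness argument are likewise the standard ones, so nothing is missing.
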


\begin{proof} [Proof to the uniqueness part of Theorem \ref{thm1.2}]
This part will  essentially follow the Lagrangian idea from
\cite{DM1}. Yet in \cite{DM1}, the initial density belongs to the
multiplier space $\cM(B^{\f{d}p}_{p,1}(\R^d))$ of
$B^{\f{d}p}_{p,1}(\R^d)$ for $1<p<2d,$ and the viscosity coefficient
$\mu(\r)$ equals some positive constant. Here the initial density
$\r_0$ belongs to $B^{\f2p}_{p,1}(\R^2),$ which is a subspace of
$\cM(B^{\f{2}p}_{p,1}(\R^2)),$ but the viscous coefficient $\mu(\r)$
depends on $\r.$ We remark that our proof here works in general
space dimensions, although we only present here the 2-D case.

Let $(\r,u,\na\Pi)$ be a global solution of \eqref{1.1} obtained in
Theorem \ref{thm1.2}. Then as $u\in L^\infty(\R^+;Lip(\R^2)),$ we
can define the trajectory $X(t,y)$ of $u(t,x)$ by
$$\partial_t X(t,y)=u(t,X(t,y)),\qquad X(0,y)=y,$$
which leads to the following relation between the Eulerian
coordinates $x$ and the Lagrangian coordinates $y$: \beq\label{u.14}
x=X(t,y)=y+\int_0^tu(\tau, X(\tau,y))d\tau. \eeq Moreover, we can
take $T$ so small that \beq\label{u.15} \int_0^T\|\na
u(t,\cdot)\|_{L^\infty}\leq \f12. \eeq Then for $t\leq T,$ $X(t,y)$
is invertible with respect to $y$ variables, and we denote
$Y(t,\cdot)$ to be its inverse mapping. Let \beno
\bar{u}(t,y)\eqdefa u(t,x)=u(t,X(t,y))\quad\mbox{and}\quad
\bar{\Pi}(t,y)\eqdefa \Pi(t,X(t,y)). \eeno Then similar to
\cite{DM1}, one has \beq \label{u.15ag} \bar{u}\in
\wt{L}^\infty(\R^+;\dB^{-1+\f2p}_{p,1}(\R^2)) \quad\mbox{and}\quad
\p^2\bar u, \p_t\bar u, \na\bar{\Pi}\in
L^1(\R^+;\dB^{-1+\f2p}_{p,1}(\R^2)), \eeq and \beq\label{u.16}
\begin{split} &\partial_t
\bar{u}(t,y)=\partial_t u(t,x)+ u(t,x)\nabla u(t,x),\\
&\partial_{x_i} u^j(t,x)= \partial_{y_k}
\bar{u}^j(t,y)\partial_{x_i} y^k \quad\mbox{for}\quad x=X(t,y),\
y=Y(t,x) \end{split} \eeq so that let $A(t,y)\eqdefa (\na
X(t,y))^{-1}=\na_x Y(t,x),$ we have \beq\label{u.17}
 \nabla_x u(t,x)= A(t,y)^T\nabla_y \bar{u}(t,y)\quad\mbox{ and}\quad
\dive u(t,x)=\dive( A(t,y) \bar{u}(t,y)), \eeq and $(\bar{u},
\na_y\bar{\Pi})$ solves
\begin{equation}\label{u.18}
\left\{\begin{array}{l} \displaystyle
\rho_{0}\pa_t\bar{u}-\dv_y(\mu(\rho_{0})d(\bar{u}))
+\grad_y\bar{\Pi}=\dv\bigl(\mu(\rho_{0})(A A^T-Id)d(\bar{u})\bigr)+(Id-A)^T\na_y\bar{\Pi}, \\
\displaystyle \dv\,\bar{u} = \dv\bigl((Id- A)\bar{u}\bigr).
\end{array}\right.
\end{equation}

Now let $(\r_i, u_i, \na\Pi_i),$ $i=1,2,$ be two solutions of
\eqref{1.1} which satisfy the regularity properties listed in
Theorem \ref{thm1.2}. Let $(\bar{u}_i, A_i, \bar{\Pi}_i),$ $i=1,2,$
be defined from \eqref{u.14} to \eqref{u.16}, we denote
$$
(\delta A,\delta\bar u,\nabla\delta\bar\Pi) \eqdefa (A_2-A_1,\bar
u_2-\bar u_1,\nabla\bar\Pi_2-\nabla\bar\Pi_1).
$$
Then it follows from \eqref{u.18} that the system for $(\delta \bar
u, \nabla\delta\bar \Pi)$ reads \beq\label{u.19}
\left\{\begin{array}{l} \displaystyle \pa_t\delta\bar u-\D_y\d\bar u
+\grad_y\delta\bar\Pi=\d\bar F, \\
\displaystyle \dv_y\,\delta\bar u = \na\delta\bar u:(Id-A_2)-\na
u_1:\delta A =\dv_y\bigl((Id-A_2)\delta \bar u-\delta A \bar
u_1\bigr),\\
\displaystyle \d\bar{u}|_{t=0}=0,
\end{array}\right.
\eeq where
$$
\begin{aligned}
\d\bar F=&(1-\r_0)\pa_t\delta\bar
u+\dv_y[(\mu(\rho_{0})-1)\na_y\delta \bar u]- \delta
A^T\nabla_y\bar\Pi_1+ (Id-A_2)^T\grad_y\delta\bar\Pi
\\&
+\dv_y\bigl\{ \mu(\rho_{0})[(A_2 A_2^T-Id) d(\delta \bar u) + (A_2
A_2^T-A_1 A_1^T) d(\bar u_1)]\bigr\}.
\end{aligned}
$$
We first deduce from product laws in Besov spaces (\cite{BCD}) that
\beno \begin{split} \bigl\|(1-\r_0)\pa_t\delta\bar
u+&\dv_y[(\mu(\rho_{0})-1)\na_y\delta \bar
u]\bigr\|_{L^1_t(\dB^{-1+\f2p}_{p,1})}\\
\lesssim&
\|(1-\r_0)\|_{\dB^{\f2p}_{p,1}}\|\pa_t\delta\bar
u\|_{L^1_t(\dB^{-1+\f2p}_{p,1})}+\|(\mu(\rho_{0})-1)\|_{\dB^{\f2p}_{p,1}}\|\na_y\delta
\bar u\|_{L^1_t(\dB^{\f2p}_{p,1})}\\
\lesssim &\|(1-\r_0)\|_{\dB^{\f2p}_{p,1}}\bigl(\|\pa_t\delta\bar
u\|_{L^1_t(\dB^{-1+\f2p}_{p,1})}+\|\delta \bar
u\|_{L^1_t(\dB^{1+\f2p}_{p,1})}\bigr).
\end{split}
\eeno

Before going further, we recall from \cite{DM1,DM2} that under the
assumption of \eqref{u.15}, one has \beq\label{u.19ad}\begin{split}
& \delta A(t)=\big(\int_0^t\na \delta \bar u(\tau)\,d\tau\big) \cdot
\Bigl(\sum_{k\geq1}\sum_{0\leq j\leq k}C_1^j(t)C_2^{k-1-j}(t)\Bigr),\\
&A_i(t)-Id=\sum_{k\geq1}(-1)^k(C_i(t))^k \qquad\mbox{with}\qquad
C_i(t)\eqdefa\int_0^t\na\bar u_i(\tau)\,d\tau.
\end{split}
\eeq Thanks to \eqref{u.19ad}, for $p\in (1,4),$ we get, by applying
product laws in Besov spaces (\cite{BCD}), that
$$
\begin{aligned}
\bigl\|-\delta
A^T\nabla\bar\Pi_1&+(Id-A_2)^T\grad\delta\bar\Pi\bigr\|_{L^1_t((\dB^{-1+\f2p}_{p,1})}\\
&\lesssim \|\delta
A\|_{L^\infty_t(\dB^{\f2p}_{p,1})}\|\nabla\bar\Pi_1\|_{L^1_t(\dB^{-1+\f2p}_{p,1})}
+\|Id-A_2\|_{L^\infty_t(\dB^{\f2p}_{p,1})}\|\grad\delta\bar\Pi\|_{L^1_t(\dB^{-1+\f2p}_{p,1})}
\\&
\lesssim
\|\nabla\bar\Pi_1\|_{L^1_t(\dB^{-1+\f2p}_{p,1})}\|\nabla\delta \bar
u\|_{L^1_t(\dB^{\f2p}_{p,1})} +\|\na\bar
u_2\|_{L^1_t(\dB^{\f2p}_{p,1})}
\|\grad\delta\bar\Pi\|_{L^1_t(\dB^{-1+\f2p}_{p,1})}.
\end{aligned}
$$
To deal with $\dv\bigl\{\mu(\r_0)(A_2A_2^T-Id)d(\delta\bar
u)\bigr\},$ we write
$$
\dv\bigl\{\mu(\r_0)(A_2A_2^T-Id)d(\delta\bar u)\bigr\}
=\dv\bigl\{\mu(\r_0)[(A_2-Id)(A_2-Id)^T+A_2-Id+(A_2-Id)^T]d(\delta
\bar u)\bigr\},
$$
from which, we infer
$$
\begin{aligned}
\bigl\|\dv\bigl\{&\mu(\r_0)(A_2A_2^T-Id)d(\delta\bar
u)\bigr\}\bigr\|_{L^1_t(\dB^{-1+\f2p}_{p,1})} \\
\lesssim&
\bigl(1+\|\mu(\r_0)-1\|_{\dB^{\f2p}_{p,1}}\bigr)\bigl(1+\|A_2-Id\|_{L^\infty_t(\dB^{\f2p}_{p,1})}\bigr)\|A_2-Id\|_{L^\infty_t(\dB^{\f2p}_{p,1})}
\|\nabla\delta\bar u\|_{L^1_t(\dB^{\f2p}_{p,1})}
\\\lesssim&
\bigl(1+\|\na\bar u_2\|_{L^1_t(\dB^{\f2p}_{p,1})}\bigr)\|\na\bar
u_2\|_{L^1_t(\dB^{\f2p}_{p,1})}\|\delta\bar
u\|_{L^1_t(\dB^{1+\f2p}_{p,1})}.
\end{aligned}
$$
Similar estimate holds for
$\dv\bigl\{\mu(\r_0)(A_2A_2^T-A_1A_1^T)d(\bar u_1)\bigr\}.$ As a
consequence, we obtain \beq \label{u.20} \begin{split} \|\d\bar
F\|_{L^1_t(\dB^{-1+\f2p}_{p,1})}\lesssim&
\bigl(\eta_1(t)+\|(1-\r_0)\|_{\dB^{\f2p}_{p,1}}\bigr)\\
&\qquad\times\Bigl(\|\pa_t\delta\bar
u\|_{L^1_t(\dB^{-1+\f2p}_{p,1})}+\|\delta \bar
u\|_{L^1_t(\dB^{1+\f2p}_{p,1})}+\|\grad\delta\bar\Pi\|_{L^1_t(\dB^{-1+\f2p}_{p,1})}\Bigr),
\end{split}
\eeq with $\lim_{t\longrightarrow0}\eta_1(t)=0.$

On the other hand, we deduce from \eqref{u.19} and \eqref{u.19ad}
that \beq\label{u.21}
\begin{split}
\|\dv \d\bar u\|_{L^1_t(\dB^{\f2p}_{p,1})} \lesssim &
\|\na\delta\bar
u\|_{L^1_t(\dB^{\f2p}_{p,1})}\|A_2-Id\|_{L^\infty_t(\dB^{\f2p}_{p,1})}
+\|\na\bar u_1\|_{L^1_t(\dB^{\f2p}_{p,1})}\|\d
A\|_{L^\infty_t(\dB^{\f2p}_{p,1})}\\
\lesssim& \bigl(\|\na\bar u_1\|_{L^1_t(\dB^{\f2p}_{p,1})}+\|\na\bar
u_2\|_{L^1_t(\dB^{\f2p}_{p,1})}\bigr)\|\delta\bar
u\|_{L^1_t(\dB^{1+\f2p}_{p,1})}.
\end{split}
\eeq Along the same line, we get
$$
\begin{aligned}
\bigl\|\pa_t\bigl((Id-A_2)&\delta \bar{u}-\delta A
\bar{u}^1\bigr)\bigr\|_{L^1_t(\dB^{-1+\f2p}_{p,1})}\\ \lesssim&
\|\na \bar{u}_2\delta\bar
u\|_{L^1_t(\dB^{-1+\f2p}_{p,1})}+\|(Id-A_2)\pa_t\delta\bar
u\|_{L^1_t(\dB^{-1+\f2p}_{p,1})}+\|\na\delta \bar u\bar
u_1\|_{L^1_t(\dB^{-1+\f2p}_{p,1})}
\\&
+\Bigl\|\int_0^{t}|\na\delta \bar u\,dt'||\na\bar U_{1,2}| |\bar
u_1|\Bigr\|_{L^1_t(\dB^{-1+\f2p}_{p,1})}
+\Bigl\|\int_0^{t}|\na\delta u|\,dt'|\pa_t\bar
u_1|\Bigr\|_{L^1_t(\dB^{-1+\f2p}_{p,1})}
\\
\lesssim&\|\na \bar u_2\|_{L^1_t(\dB^{\f2p}_{p,1})}\|\delta \bar
u\|_{L^\infty_t(\dB^{-1+\f2p}_{p,1})}
+\|Id-A_2\|_{L^\infty_t(\dB^{\f2p}_{p,1})}\|\pa_t\delta \bar
u\|_{L^1_t(\dB^{-1+\f2p}_{p,1})}
\\&
+\|\na\delta\bar u\|_{L^2_t(\dB^{-1+\f2p}_{p,1})}\|\bar
u_1\|_{L^2_t(\dB^{\f2p}_{p,1})} +\|\na\delta \bar
u\|_{L^1_t(\dB^{\f2p}_{p,1})} \|\na\bar
U_{1,2}\|_{L^2_t(\dB^{-1+\f2p}_{p,1})}\|\bar
u_1\|_{L^2_t(\dB^{\f2p}_{p,1})}
\\&+\|\na \delta\bar u\|_{L^1_t(\dB^{\f2p}_{p,1})} \|\pa_t\bar
u_1\|_{L^1_t(\dB^{-1+\f2p}_{p,1})},
\end{aligned}
$$ which implies
\beq\label{u.22}\begin{split} \bigl\|\pa_t\bigl((Id-A_2)\delta
\bar{u}-&\delta A \bar{u}^1\bigr)\bigr\|_{L^1_t(\dB^{-1+\f2p}_{p,1})}\\
&\lesssim \eta_2(t)\Bigl(\|\delta \bar
u\|_{L^\infty_t(\dB^{-1+\f2p}_{p,1})}+\|\delta \bar
u\|_{L^1_t(\dB^{1+\f2p}_{p,1})} +\|\p_t\d \bar
u\|_{L^1_t(\dB^{-1+\f2p}_{p,1})}\Bigr), \end{split} \eeq where
$\bar{U}_{1,2}$ denotes component of either $\bar u_1$ or $\bar
u_2,$ and $\lim_{t\longrightarrow0}\eta_2(t)=0.$

Thanks to Lemma \ref{lemq.1}, we get, by summing up \eqref{u.20} to
\eqref{u.22}, that \beq\label{u.23} \begin{split}
 \|\delta\bar
u&\|_{\wt{L}^\infty_t(\dB^{-1+\f2p}_{p,1})} +\|(\pa_t\delta \bar u,
\na^2\delta \bar
u,\nabla\delta\bar\Pi)\|_{L^1_t(\dB^{-1+\f2p}_{p,1})} \\
&\leq C( \eta(t)+\|\r_0-1\|_{\dB^{\f2p}_{p,1}})\Bigl(\|\delta\bar
u\|_{\wt{L}^\infty_t(\dB^{-1+\f2p}_{p,1})} +\|(\pa_t\delta \bar u,
\na^2\delta \bar
u,\nabla\delta\bar\Pi)\|_{L^1_t(\dB^{-1+\f2p}_{p,1})}\Bigr),
\end{split}
\eeq
 for some positive function $\eta(t)$ satisfying $\lim_{t\to
 0}\eta(t)=0.$
Uniqueness part of Theorem \ref{thm1.2} on a sufficiently small time
interval $[0,t_1]$ follows \eqref{u.23}.  The whole time uniqueness
then can be obtained by a bootstrap method. This completes the proof
of Theorem \ref{thm1.2}.
\end{proof}

\renewcommand{\theequation}{\thesection.\arabic{equation}}
\setcounter{equation}{0}

\section{Proof of Theorem \ref{thm1.3}}

In this section, we shall present the proof of  Theorem \ref{thm1.3}
by  following  the same line to that of Theorem \ref{thm1.2}. For
this, we first recall the following lemma from \cite{abidi0}:

\begin{lem}\label{lemh.0}[Lemma 2.1 of \cite{abidi0}]
{\sl Let $s<\f2p, (p,r)\in [1,\infty]^2$ be such that
$s+2\inf\bigl(\f1p,\f1{p'}\bigr)>0.$ Let $a\in
\dot B^{\f2p}_{p,\infty}\cap L^\infty(\R^2)$ and $b\in
\dB^s_{p,r}(\R^2).$ We denote
$\la(s)\eqdefa\f{s+2\inf\bigl(\f1p,\f1{p'}\bigr)}{|s|+s+2\inf\bigl(\f1p,\f1{p'}\bigr)},$
then \beno \Vert ab\Vert_{\dot B^{s}_{p,r}} \lesssim \Vert
b\Vert_{\dot B^{s}_{p,r}} \left\{\begin{array}{ll} \Vert
a\Vert_{L^\infty} +\Vert a\Vert_{L^\infty}^{1-\max(0,s)\frac{p}{2}}
\Vert a\Vert_{\dot B^{\frac{2}{p}}_{p,\infty}}^{\max(0,s)\frac{p}{2}}
+\Vert a\Vert_{L^\infty}^{\lambda(s)} \Vert
a\Vert_{\dot B^{\frac{2}{p}}_{p,\infty}}^{1-\lambda(s)}
&\textrm{if $s\neq0$}\\
\Vert a\Vert_{L^\infty} \ln\big(e+\Vert
a\Vert_{\dot B^{\frac{2}{p}}_{p,\infty}}{\Vert
a\Vert^{-1}_{L^\infty}}\big) &\textrm{if $s=0.$}
\end{array}\right.
 \eeno}
\end{lem}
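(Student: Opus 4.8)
The plan is to estimate the product $ab$ through Bony's decomposition \eqref{bony}, $ab=T_ab+T_ba+R(a,b)$, and to treat the three pieces separately. The paraproduct $T_ab$ is the easy one: since $\|\dot S_{\ell-1}a\|_{L^\infty}\le\|a\|_{L^\infty}$, a direct application of Lemma \ref{lem2.1} gives $\|T_ab\|_{\dB^s_{p,r}}\lesssim\|a\|_{L^\infty}\|b\|_{\dB^s_{p,r}}$, which already produces the clean $\|a\|_{L^\infty}$ contribution present in every case of the statement. The whole difficulty is therefore concentrated in $T_ba$ and $R(a,b)$, where the high frequencies of $a$ — controlled only through the borderline space $\dB^{\f2p}_{p,\infty}$ — come into play.

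For $T_ba$ I would first absorb $b$ into an $L^\infty$ bound on $\dot S_{j-1}b$: writing $\|\dot S_{j-1}b\|_{L^\infty}\lesssim\sum_{k<j}2^{\f{2k}p}\|\dot\Delta_kb\|_{L^p}$ by Bernstein, summing the geometric factor $2^{k(\f2p-s)}$ requires \emph{exactly} the hypothesis $s<\f2p$ and yields $\|\dot S_{j-1}b\|_{L^\infty}\lesssim 2^{j(\f2p-s)}\|b\|_{\dB^s_{p,r}}$, after which the residual factor $2^{\f{2j}p}\|\dot\Delta_ja\|_{L^p}$ is controlled by the $\dB^{\f2p}_{p,\infty}$ norm of $a$. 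For $R(a,b)$ the Fourier supports force a sum over $k\ge\ell$, and I would pair $\dot\Delta_ka$ with $\wt{\dot\Delta}_kb$ by H\"older, choosing the $L^\infty\times L^p$ pairing when $p\ge2$ and the $L^p\times L^p$ pairing with a Bernstein gain of $2^{\f{2\ell}p}$ when $p\le2$; it is precisely this dichotomy that makes the convergence of the resulting geometric series hinge on $s+2\inf(\f1p,\f1{p'})>0$, the second standing hypothesis, and that accounts for the appearance of $\inf(\f1p,\f1{p'})$.

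The interpolated factors $\|a\|_{L^\infty}^{\lambda(s)}\|a\|_{\dB^{\f2p}_{p,\infty}}^{1-\lambda(s)}$ and $\|a\|_{L^\infty}^{1-\max(0,s)\f p2}\|a\|_{\dB^{\f2p}_{p,\infty}}^{\max(0,s)\f p2}$ are then obtained by refining the crude bounds above: rather than committing to a single norm of $a$, I would split the frequency sum over the dyadic blocks of $a$ at a threshold $N$, estimating the low blocks ($k\le N$) through $\|a\|_{L^\infty}$ and the high blocks ($k>N$) through $\|a\|_{\dB^{\f2p}_{p,\infty}}$, and then optimise over $N$. The two partial sums carry opposite powers of $2^{N}$ governed respectively by $s$ and by $\f2p-s$, so minimising over $N$ turns the split into a geometric mean, the exponent $\lambda(s)$ emerging from the balance of these powers. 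At the critical value $s=0$ the two powers cancel, the optimisation degenerates, and the number of intermediate blocks produces the logarithm $\ln(e+\|a\|_{\dB^{\f2p}_{p,\infty}}\|a\|_{L^\infty}^{-1})$ in place of a pure power. I expect this endpoint optimisation at $s=0$, together with the careful bookkeeping of the $\ell^r$ summability of the Littlewood--Paley coefficients when $r<\infty$, to be the main technical obstacle; the full computation follows Lemma 2.1 of \cite{abidi0}, to which I would defer for the remaining routine details.
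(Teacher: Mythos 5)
The first thing to note is that the paper never proves this lemma: it is quoted verbatim as Lemma 2.1 of \cite{abidi0}, so the only internal point of comparison is the proof of the companion Lemma \ref{lemh.1}, which uses precisely the device you propose — Bony's decomposition, the trivial bound $\|T_ab\|_{\dB^s_{p,r}}\lesssim\|a\|_{L^\infty}\|b\|_{\dB^s_{p,r}}$, and a frequency-threshold split optimised over the cut-off to convert a crude sum into a geometric mean of $\|a\|_{L^\infty}$ and a Besov norm, with the logarithm appearing when the optimisation degenerates. Your attribution of the two hypotheses is also correct in spirit: $s<\f2p$ enters through $\|\dot S_{\ell-1}b\|_{L^\infty}\lesssim 2^{\ell(\f2p-s)}\|b\|_{\dB^s_{p,r}}$ in $T_ba$, and $s+2\inf\bigl(\f1p,\f1{p'}\bigr)>0$ through the remainder.

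Two steps, however, would fail as written. \emph{(i)} Your H\"older dichotomy in $\cR(a,b)$ is inverted. For $p\geq2$ the pairing $\|\dot\Delta_ka\|_{L^\infty}\|\wt{\dot\Delta}_kb\|_{L^p}$ leaves the series $\sum_{k\geq j-3}2^{(j-k)s}$, convergent only for $s>0$, not on the stated range $s>-\f2p$; to reach it you must pair $L^p\times L^p\to L^{p/2}$ (legitimate since $p/2\geq1$) and recover the factor $2^{\f{2j}p}$ by Bernstein. Conversely, for $p\leq2$ the $L^{p/2}$ route is exactly the one that breaks: $L^{p/2}$ is only quasi-normed and $\dot\Delta_j$ is not bounded on it, and even formally it would output the condition $s+\f2p>0$ instead of the correct $s+\f2{p'}>0$; here one must instead pair $\|\dot\Delta_ka\|_{L^{p'}}\|\wt{\dot\Delta}_kb\|_{L^p}$ in $L^1$, using $\|\dot\Delta_ka\|_{L^{p'}}\lesssim 2^{-\f{2k}{p'}}\|a\|_{\dB^{\f2p}_{p,\infty}}$ and the Bernstein gain $2^{\f{2j}{p'}}$ from $L^1$ to $L^p$. \emph{(ii)} You attribute the exponent $\lambda(s)$ to the balance of $s$ against $\f2p-s$ in the paraproduct split, but that balance (solving $2^{Ms}\|a\|_{L^\infty}=2^{-M(\f2p-s)}\|a\|_{\dB^{\f2p}_{p,\infty}}$) produces the \emph{other} interpolated term, with exponent $\max(0,s)\f p2$. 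The exponent $\lambda(s)$ — whose formula contains $|s|$ and $\inf\bigl(\f1p,\f1{p'}\bigr)$, quantities absent from that split — arises from an analogous cut inside the remainder: for $s<0$ the near-diagonal blocks cost $2^{N|s|}\|a\|_{L^\infty}$ while the far blocks, estimated as in (i), cost $2^{-N(s+2\inf(\f1p,\f1{p'}))}\|a\|_{\dB^{\f2p}_{p,\infty}}$, and equating the two yields exactly $\|a\|_{L^\infty}^{\lambda(s)}\|a\|_{\dB^{\f2p}_{p,\infty}}^{1-\lambda(s)}$. With these two repairs — and the $s=0$ logarithm then falling out of the degenerate balance in either split, as you anticipated — your scheme does reproduce the statement.
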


\begin{lem}\label{lemh.1}
{\sl Let $p\in (1,\infty), 0<\e<\f2p.$ Then for $a\in
\dB^{\f2p}_{p,\infty}\cap \dB^{\f2p+\e}_{p,\infty}\cap
L^\infty(\R^2)$ and $b\in \dB^{\f2p-\e}_{p,\infty}\cap
\dB^{\f2p}_{p,\infty}(\R^2),$  one has \beq\label{h.6}
\begin{split}
&\|ab\|_{\dB^{\f2p}_{p,1}}\lesssim
\|a\|_{L^\infty}\|b\|_{\dB^{\f2p}_{p,1}}+\|a\|_{L^\infty}^{\f{p\e}{2+p\e}}\|a\|_{\dB^{\f2p+\e}_{p,\infty}}^{\f2{2+p\e}}
\|b\|_{\dB^{\f2p-\e}_{p,1}}^{\f2{2+p\e}}\|b\|_{\dB^{\f2p}_{p,1}}^{\f{p\e}{2+p\e}},\\
&\|ab\|_{\dB^{\f2p-\e}_{p,1}}\lesssim
\|a\|_{L^\infty}\|b\|_{\dB^{\f2p-\e}_{p,1}}+\|a\|_{L^\infty}^{\f{2\e}{p}}\|a\|_{\dB^{\f2p}_{p,\infty}}^{1-\f{2\e}{p}}
\|b\|_{\dB^{\f2p-\e}_{p,1}}.
\end{split}
\eeq }
\end{lem}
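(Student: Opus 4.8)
The plan is to estimate both products through Bony's decomposition \eqref{bony}, writing $ab=T_ab+T_ba+\cR(a,b)$, and to isolate the one genuinely delicate piece. The paraproduct $T_ab$ is harmless: since $a\in L^\infty$, the standard paraproduct estimate gives $\|T_ab\|_{\dB^{\f2p}_{p,1}}\lesssim\|a\|_{L^\infty}\|b\|_{\dB^{\f2p}_{p,1}}$ (and likewise with $\dB^{\f2p-\e}_{p,1}$), which produces the first term on the right-hand side of each inequality in \eqref{h.6}. All the work is therefore concentrated in the remaining piece $T_ba+\cR(a,b)=\sum_{j}\dot\Delta_j a\,\dot S_{j+2}b$, whose spectral support forces, for the positive index $\f2p>0$ (resp.\ $\f2p-\e>0$), the reduction
\[
\|T_ba+\cR(a,b)\|_{\dB^{\f2p}_{p,1}}\lesssim\sum_{j}2^{\f{2j}p}\|\dot\Delta_j a\|_{L^p}\,\|\dot S_{j+2}b\|_{L^\infty}.
\]

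For the first inequality the main idea is to play two complementary bounds on the dyadic blocks against each other. On the $a$-side, Lemma \ref{lem2.1} (Bernstein) gives $2^{\f{2j}p}\|\dot\Delta_j a\|_{L^p}\le 2^{-j\e}\|a\|_{\dB^{\f2p+\e}_{p,\infty}}$, a bound decaying in the high frequencies that must be complemented, in the low frequencies, by the uniform control coming from $\|a\|_{L^\infty}$ (this is where the super-critical regularity $\f2p+\e>\f2p$ of $a$ is tamed by its $L^\infty$ norm). On the $b$-side the embedding $\dB^{\f2p}_{p,1}\hookrightarrow L^\infty$ and a direct summation give the two estimates $\|\dot S_{j+2}b\|_{L^\infty}\lesssim\|b\|_{\dB^{\f2p}_{p,1}}$ and $\|\dot S_{j+2}b\|_{L^\infty}\lesssim d_j\,2^{j\e}\|b\|_{\dB^{\f2p-\e}_{p,1}}$ with $(d_j)\in\ell^1$. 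I would then split the $j$-summation at a threshold $N$ where the two regimes balance, use the pair $\bigl(\|a\|_{L^\infty},\|b\|_{\dB^{\f2p}_{p,1}}\bigr)$ for $j\le N$ and the pair $\bigl(\|a\|_{\dB^{\f2p+\e}_{p,\infty}},\|b\|_{\dB^{\f2p-\e}_{p,1}}\bigr)$ for $j>N$, and optimise in $N$; this optimisation produces exactly the geometric mean with weights $\theta=\f2{2+p\e}$ and $1-\theta=\f{p\e}{2+p\e}$ displayed in \eqref{h.6}.

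The second inequality follows the same scheme but is simpler, because the target index $\f2p-\e$ lies in $(0,\f2p)$, so $a$ acts as a genuine multiplier and $b$ stays in the single space $\dB^{\f2p-\e}_{p,1}$. Here $T_ab$ again yields the $\|a\|_{L^\infty}\|b\|_{\dB^{\f2p-\e}_{p,1}}$ term, while for the remaining piece the only trade is on the $a$-side, between $\|a\|_{L^\infty}$ and $\|a\|_{\dB^{\f2p}_{p,\infty}}$; splitting the frequencies and optimising gives the interpolation weight $\f{2\e}p$ of the second line of \eqref{h.6}. (Alternatively this estimate can essentially be read off from Lemma \ref{lemh.0} applied with $s=\f2p-\e$, after comparing the resulting geometric means by Young's inequality.)

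The step I expect to be the main obstacle is extracting the \emph{sharp} geometric mean rather than the cruder sum $\|a\|_{L^\infty}\|b\|_{\dB^{\f2p}_{p,1}}+\|a\|_{\dB^{\f2p+\e}_{p,\infty}}\|b\|_{\dB^{\f2p-\e}_{p,1}}$: the endpoint sum is not summable block by block, so the entire gain has to come from the frequency splitting and the optimisation in $N$. A second, more technical, difficulty is the homogeneous low-frequency behaviour, since $\dot S_j b$ need not belong to $L^p$ for $b\in\dB^{\f2p-\e}_{p,1}$; this is precisely why one keeps $a$ in $L^p$ and the low-pass of $b$ in $L^\infty$, exploiting the $\ell^1$-summability of the coefficients $(d_j)$ in the bound for $\|\dot S_{j+2}b\|_{L^\infty}$.
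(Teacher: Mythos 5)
Your overall frame (Bony decomposition, $T_ab$ absorbed by $\|a\|_{L^\infty}$, all the work in $R(a,b)=T_ba+\cR(a,b)=\sum_j\dot\Delta_j a\,\dot S_{j+2}b$, ball criterion for the positive indices $\f2p$ and $\f2p-\e$) matches the paper, and your reduction $\|R(a,b)\|_{\dB^{s}_{p,1}}\lesssim\sum_j 2^{js}\|\dot\Delta_j a\|_{L^p}\|\dot S_{j+2}b\|_{L^\infty}$ is correct. But there is a genuine gap at the heart of your optimisation: once you have committed the block $\dot\Delta_j a$ to $L^p$ in this reduction, the norm $\|a\|_{L^\infty}$ is permanently out of reach. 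Your low-frequency regime "use the pair $\bigl(\|a\|_{L^\infty},\|b\|_{\dB^{2/p}_{p,1}}\bigr)$ for $j\le N$" would require $2^{\f{2j}p}\|\dot\Delta_j a\|_{L^p}\lesssim\|a\|_{L^\infty}$, and no such bound exists: Bernstein only increases the Lebesgue exponent ($\|\dot\Delta_ja\|_{L^\infty}\lesssim 2^{2j/p}\|\dot\Delta_ja\|_{L^p}$, the wrong direction), and $L^\infty\not\hookrightarrow L^p(\R^2)$. Your own (correct) remark that $\dot S_{j+2}b\notin L^p$ in general — which is why you kept $a$ in $L^p$ — is exactly what locks you out: flipping the H\"older pairing wholesale is impossible, so from your reduction only the combinations $\|a\|_{\dB^{2/p+\e}_{p,\infty}}\|b\|_{\dB^{2/p-\e}_{p,1}}$ (with no rate in $N$, since $(d_j)$ is merely $\ell^1$) or $\|a\|_{\dB^{2/p+\e}_{p,\infty}}\|b\|_{\dB^{2/p}_{p,1}}$ are reachable, never $\|a\|_{L^\infty}\|b\|_{\dB^{2/p}_{p,1}}$, and the geometric mean cannot be produced. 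The same defect kills your argument for the second inequality, where the trade is again between $\|a\|_{L^\infty}$ and a Besov norm of $a$.

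The paper's proof repairs precisely this point by splitting at a \emph{relative} frequency gap inside the low-pass, not at an absolute output frequency $N$: writing $\dot S_{\ell-1}b=\sum_{k\le\ell-2}\dot\Delta_kb$, it pairs $\|\dot\Delta_\ell a\|_{L^p}\,\|\dot\Delta_k b\|_{L^\infty}$ for $k\le\ell-M$ (yielding $2^{-M\e}\|a\|_{\dB^{2/p+\e}_{p,\infty}}\|b\|_{\dB^{2/p-\e}_{p,1}}$ after summation) and $\|\dot\Delta_\ell a\|_{L^\infty}\,\|\dot\Delta_k b\|_{L^p}$ for $\ell-M<k\le\ell$ (yielding $2^{\f{2M}p}\|a\|_{L^\infty}\|b\|_{\dB^{2/p}_{p,1}}$); \emph{single} blocks $\dot\Delta_kb$ do lie in $L^p$, so the homogeneous low-frequency obstruction you identified never arises, and optimising in the gap $M$ gives exactly the weights $\f2{2+p\e}$ and $\f{p\e}{2+p\e}$ (and $\f{2\e}p$ for the second line). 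Note also that your fallback through Lemma \ref{lemh.0} with $s=\f2p-\e$ does not rescue the statement: it produces the exponents $\max(0,s)\f p2=1-\f{p\e}2$ and $\lambda(s)$ on $\|a\|_{\dB^{2/p}_{p,\infty}}$, and since $\f{p\e}2<\f{2\e}p$ for $p<2$, the resulting geometric mean is weaker than the claimed one when $\|a\|_{\dB^{2/p}_{p,\infty}}\ge\|a\|_{L^\infty}$; comparing via Young's inequality only degrades everything to the additive bound you yourself called too crude — and the multiplicative structure is what the smallness condition \eqref{h.10} in the proof of Theorem \ref{thm1.3} actually needs.
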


\begin{proof}  Bony's decomposition \eqref{bony} for $ab$ reads
\beno ab=T_ab+T_ba+\cR(a,b). \eeno It follows from para-product
estimate that \beq \label{h.7}
\begin{split}
&\|T_ab+\cR(a,b)\|_{\dB^{\f2p}_{p,1}}\lesssim
\|a\|_{L^\infty}\|b\|_{\dB^{\f2p}_{p,1}}
\quad\mbox{and}\quad\|T_ab+\cR(a,b)\|_{\dB^{\f2p-\e}_{p,1}}\lesssim
\|a\|_{L^\infty}\|b\|_{\dB^{\f2p-\e}_{p,1}}.
\end{split}
\eeq To deal with $T_ba,$ for any integer $M>0,$ we write \beno
\begin{split}
\|\dD_j(T_ba)\|_{L^p}\lesssim \sum_{|\ell-j|\leq 4}\Bigl\{\|\dD_\ell
a\|_{L^p}\sum_{k\leq\ell-M}\|\dD_k b\|_{L^\infty}+\|\dD_\ell
a\|_{L^\infty}\sum_{\ell-M<k\leq\ell}\|\dD_k b\|_{L^p}\Bigr\},
\end{split}
\eeno so that \beno
\begin{split}
\|T_ba\|_{\dB^{\f2p}_{p,1}}\lesssim &\sum_{\substack{\ell\in\Z\\
k\leq\ell-M}}2^{(k-\ell)\e}d_k\|a\|_{\dB^{\f2p+\e}_{p,\infty}}\|b\|_{\dB^{\f2p-\e}_{p,1}}
+\sum_{\substack{\ell\in\Z\\\ell-M<
k\leq\ell}}2^{(\ell-k)\f2p}d_k\|a\|_{L^\infty}\|b\|_{\dB^{\f2p}_{p,1}}\\
\lesssim &
2^{-M\e}\|a\|_{\dB^{\f2p+\e}_{p,\infty}}\|b\|_{\dB^{\f2p-\e}_{p,1}}+2^{\f{2M}p}\|a\|_{L^\infty}\|b\|_{\dB^{\f2p}_{p,1}}.
\end{split}
\eeno Thus choosing
$M=\Bigl[\f{p}{2+p\e}\log_2\f{\|a\|_{\dB^{\f2p+\e}_{p,\infty}}\|b\|_{\dB^{\f2p-\e}_{p,1}}}{\|a\|_{L^\infty}\|b\|_{\dB^{\f2p}_{p,1}}}\Bigr]$
in the above inequality gives rise to \beq\label{h.8}
\|T_ba\|_{\dB^{\f2p}_{p,1}}\lesssim
\bigl(\|a\|_{L^\infty}\|b\|_{\dB^{\f2p}_{p,1}}\bigr)^{\f{p\e}{2+p\e}}\bigl(\|a\|_{\dB^{\f2p+\e}_{p,\infty}}\|b\|_{\dB^{\f2p-\e}_{p,1}}\bigr)^{\f2{2+p\e}}.
\eeq This together with \eqref{h.7} proves the first inequality of
\eqref{h.6}.

Along the same line to proof of \eqref{h.8}, for any positive
integer $M,$ one has \beno \|T_ba\|_{\dB^{\f2p-\e}_{p,1}}\lesssim
\bigl(2^{-M\e}\|a\|_{\dB^{\f2p+\e}_{p,\infty}}+2^{(\f2p-\e)M}\|a\|_{L^\infty}\bigr)\|b\|_{\dB^{\f2p-\e}_{p,1}}.
\eeno Choosing
$M=\Bigl[\f2p\log_2\f{\|a\|_{\dB^{\f2p+\e}_{p,\infty}}}{\|a\|_{L^\infty}}\Bigr]$
in the above inequality leads to \beno
 \|T_ba\|_{\dB^{\f2p-\e}_{p,1}}\lesssim
 \|a\|_{L^\infty}^{\f{2\e}p}\|a\|_{\dB^{\f2p+\e}_{p,\infty}}^{1-\f{2\e}p}\|b\|_{\dB^{\f2p-\e}_{p,1}}.
 \eeno
This together with \eqref{h.7} proves the second inequality of
\eqref{h.6}.
\end{proof}

We now turn to the proof of Theorem \ref{thm1.3}.

\no\begin{proof}[Proof of Theorem \ref{thm1.3}] The proof of Theorem
\ref{thm1.3} essentially follows from that of Theorem \ref{thm1.2}.
For simplicity, we just present the {\it a priori} estimates for
smooth enough solution $(\r, u, \na\Pi)$ of \eqref{1.1}. We first
get, by a similar derivation \eqref{q.2a}, that \beno
\begin{split}
\|u&\|_{\widetilde L^\infty_t(\dot B^{-1+\frac{2}{p}-\e}_{p,1})}
+c\|u\|_{L^1_t(\dot B^{1+\frac{2}{p}-\e}_{p,1})}\\
 &\qquad\lesssim  \|u_0\|_{\dot
B^{-1+\frac{2}{p}-\e}_{p,1}} +\sum_{j\in\Z}2^{j(-1+\frac{2}{p}-\e)}
\|[\dot\Delta_j\mathbb{P};u\cdot\nabla]u\|_{L^1_t(L^p)}\\
&\qquad\qquad+\|(1-\r)\bigl(\pa_tu+u\cdot\nabla
u\bigr)\|_{L^1_t(\dot B^{1+\frac{2}{p}-\e}_{p,1})} +\|(\mu(\rho)-1)
d(u)\|_{L^1_t(\dot B^{\frac{2}{p}-\e}_{p,1})}.
\end{split}
\eeno However, it follows from  Lemma \ref{lemq.9}  that \beno
\begin{split}
\|[\dot\Delta_j\mathbb{P};u\cdot\nabla]u\|_{L^1_t(L^p)}\lesssim&
d_j2^{j(1-\f2p+\e)}\int_0^t\|\na
u(t')\|_{L^2}\|u(t')\|_{\dB^{\f2p-\e}_{p,1}}\,dt'\\
\lesssim& d_j2^{j(1-\f2p+\e)}\int_0^t\|\na
u(t')\|_{L^2}\|u(t')\|_{\dB^{-1+\f2p-\e}_{p,1}}^{\f12}\|u(t')\|_{\dB^{1+\f2p-\e}_{p,1}}^{\f12}\,dt',
\end{split}
\eeno so that one has
 \beno
\begin{split}
&\|u\|_{\widetilde L^\infty_t(\dot B^{-1+\frac{2}{p}-\e}_{p,1})}
+c\|u\|_{L^1_t(\dot B^{1+\frac{2}{p}-\e}_{p,1})}\\
 &\qquad\lesssim  C\Bigl\{\|u_0\|_{\dot
B^{-1+\frac{2}{p}-\e}_{p,1}}+\|(1-\r)\bigl(\pa_tu+u\cdot\nabla
u\bigr)\|_{L^1_t(\dot B^{1+\frac{2}{p}-\e}_{p,1})} +\|(\mu(\rho)-1)
d(u)\|_{L^1_t(\dot
B^{\frac{2}{p}-\e}_{p,1})}\\
&\qquad\qquad+\int_0^t\|\na
u(t')\|_{L^2}^2\|u(t')\|_{\dB^{-1+\f2p-\e}_{p,1}}\,dt'\Bigr\}+\f{c}2\|u\|_{L^1_t(\dot
B^{1+\frac{2}{p}-\e}_{p,1})}.
\end{split}
\eeno Applying Gronwall's inequality  gives rise to \beq\label{h.3}
\begin{split}
\|u\|_{\widetilde L^\infty_t(\dot B^{-1+\frac{2}{p}-\e}_{p,1})}
&+c\|u\|_{L^1_t(\dot B^{1+\frac{2}{p}-\e}_{p,1})} \leq
C\exp\bigl(C\|u_0\|_{L^2}^2\bigr)\Bigl\{\|u_0\|_{\dot
B^{-1+\frac{2}{p}-\e}_{p,1}}\\
&\qquad +\|(\mu(\rho)-1) d(u)\|_{L^1_t(\dot
B^{\frac{2}{p}-\e}_{p,1})} +\|(1-\r)\bigl(\pa_tu+u\cdot\nabla
u\bigr)\|_{L^1_t(\dot B^{-1+\frac{2}{p}-\e}_{p,1})}\Bigr\}.
\end{split}
\eeq Whereas we deduce, by a  similar derivation of  \eqref{q.6},
that
 \beq \label{h.4} \begin{split} \|\p_tu&\|_{\widetilde
L^\infty_t(\dot B^{-1+\frac{2}{p}-\e}_{p,1})}
+\|\na\Pi\|_{L^1_t(\dot
B^{-1+\frac{2}{p}-\e}_{p,1})}\\
 \leq & \|u_0\|_{\dot
B^{-1+\frac{2}{p}-\e}_{p,1}}+C\Bigl\{\|u_0\|_{L^2}\bigl(\|u\|_{\widetilde
L^\infty_t(\dot B^{-1+\frac{2}{p}-\e}_{p,1})} +\|u\|_{L^1_t(\dot
B^{1+\frac{2}{p}-\e}_{p,1})}\bigr)\\
&\qquad+\|(1-\r)\bigl(\pa_tu+u\cdot\nabla u\bigr)\|_{L^1_t(\dot
B^{-1+\frac{2}{p}-\e}_{p,1})} +\|(\mu(\rho)-1) d(u)\|_{L^1_t(\dot
B^{\frac{2}{p}-\e}_{p,1})}\Bigr\}.
\end{split}
\eeq For $\e$ sufficiently small, we denote
$$
\frak{A}_\e(t)\eqdefa\|u\|_{\widetilde L^\infty_t(\dot
B^{-1+\frac{2}{p}-\e}_{p,1})}+\|u\|_{L^1_t(\dot
B^{1+\frac{2}{p}-\e}_{p,1})}
+\f{\e}{1+\|u_0\|_{L^2}}\Bigl(\|\pa_tu\|_{L^1_t(\dot
B^{-1+\frac{2}{p}-\e}_{p,1})}+\|\nabla\Pi\|_{L^1_t(\dot
B^{-1+\frac{2}{p}-\e}_{p,1})}\Bigr).
$$
Then summing \eqref{h.3} with
$\f{\e}{1+\|u_0\|_{L^2}}\times$\eqref{h.4}, we obtain
\beq\label{h.5}
\begin{split}
\frak{A}_\e(t)\leq&
C\exp\bigl\{C\|u_0\|_{L^2}^2\bigr\}\Bigl\{\|u_0\|_{\dot
B^{-1+\frac{2}{p}-\e}_{p,1}} +\|(\mu(\rho)-1) d(u)\|_{L^1_t(\dot
B^{\frac{2}{p}-\e}_{p,1})} \\
&\qquad\qquad\qquad\qquad\qquad\qquad+\|(1-\r)\bigl(\pa_tu+u\cdot\nabla
u\bigr)\|_{L^1_t(\dot B^{-1+\frac{2}{p}-\e}_{p,1})}\Bigr\}.
\end{split}
\eeq Similar estimate holds for $\frak{A}(t)$ defined by
\eqref{q.6a}.

On the other hand, without loss of generality, we may assume that
$-1+\f2p-\e\neq 0,$ applying \eqref{q.4} and Lemma \ref{lemh.0} that
\beno
\begin{split}
 \|(1-\r)\bigl(&\pa_tu+u\cdot\nabla u\bigr)\|_{L^1_t(\dot
B^{-1+\frac{2}{p}-\e}_{p,1})}\\
\lesssim
&\Bigl(\|\r-1\|_{L^\infty_t(L^\infty)}+\|\r-1\|_{L^\infty_t(L^\infty)}^{1-\max(0,-1+\f2p-\e)\f{p}2}\|\r-1\|_{L^\infty_t(\dB^{\f2p}_{p,1})}^{\max(0,-1+\f2p-\e)\f{p}2}\\
&\quad+\|\r-1\|_{L^\infty_t(L^\infty)}^{\la(-1+\f2p-\e)}\|\r-1\|_{L^\infty_t(\dB^{\f2p}_{p,1})}^{1-\la(-1+\f2p-\e)}\Bigr)\|(\p_tu+u\cdot\na
u)\|_{L^1_t(\dB^{-1+\f2p-\e}_{p,1})}\\
\lesssim
&\|\r_0-1\|_{L^\infty}^{\th(\e)}\|\r-1\|_{L^\infty_t(\dB^{\f2p}_{p,1})}^{1-\th(\e)}\Bigl(\|\p_tu\|_{L^1_t(\dB^{-1+\f2p-\e}_{p,1})}\\
&\quad+\|u_0\|_{L^2} (\|u\|_{\widetilde L^\infty_t(\dot
B^{-1+\frac{2}{p}-\e}_{p,1})}+\|u\|_{L^1_t(\dot
B^{1+\frac{2}{p}-\e}_{p,1})})\Bigr),
\end{split}
\eeno where $\la(-1+\f2p-\e)$ is given by Lemma \ref{lemh.0} and
$\th(\e)\eqdefa\min\bigl\{1-\max(0,-1+\f2p-\e)\f{p}2,\la(-1+\f2p-\e)\bigr\}.$
Similar estimate holds for $\|(1-\r)\bigl(\pa_tu+u\cdot\nabla
u\bigr)\|_{L^1_t(\dot B^{-1+\frac{2}{p}}_{p,1})}.$

While we get, by applying Lemma \ref{lemh.1}, that \beno
\begin{split}
\|(\mu(\rho)-1) d(u)\|_{L^1_t(\dot B^{\frac{2}{p}-\e}_{p,1})}
\lesssim &\|u\|_{L^1_t(\dot
B^{1+\frac{2}{p}-\e}_{p,1})}\Bigl(\|\mu(\r)-1\|_{L^\infty_t(L^\infty)}\\
&\quad+\|\mu(\r)-1\|_{L^\infty_t(L^\infty)}^{\f{2\e}{p}}\|\mu(\r)-1\|_{L^\infty_t(\dB^{\f2p}_{p,1})}^{1-\f{2\e}{p}}
\Bigr)\\
\lesssim
&\|\r_0-1\|_{L^\infty}^{\f{2\e}p}\|\r-1\|_{L^\infty_t(\dB^{\f2p}_{p,1})}^{1-\f{2\e}p}\|u\|_{L^1_t(\dot
B^{1+\frac{2}{p}-\e}_{p,1})},
\end{split}
\eeno and
 \beno
\begin{split}
\|(\mu(\rho)&-1) d(u)\|_{L^1_t(\dot B^{\frac{2}{p}}_{p,1})} \lesssim
\|\mu(\r)-1\|_{L^\infty_t(L^\infty)}\|u\|_{L^1_t(\dot
B^{1+\frac{2}{p}}_{p,1})}\\
&+\bigl(\|\mu(\r)-1\|_{L^\infty_t(L^\infty)}\|u\|_{L^1_t(\dot
B^{1+\frac{2}{p}}_{p,1})}\bigr)^{\f{p\e}{2+p\e}}\bigl(\|\mu(\r)-1\|_{L^\infty_t(\dB^{\f2p+\e}_{p,1})}\|u\|_{L^1_t(\dot
B^{1+\frac{2}{p}-\e}_{p,1})}\bigr)^{\f{2}{2+p\e}}
\\
\lesssim
&\|\r_0-1\|_{L^\infty}^{\f{p\e}{2+p\e}}\bigl(\|\r_0-1\|_{L^\infty}^{\f2{2+p\e}}+\|\r-1\|_{L^\infty_t(\dB^{\f2p+\e}_{p,1})}^{\f2{2+p\e}}\bigr)\Bigl(\|u\|_{L^1_t(\dot
B^{1+\frac{2}{p}-\e}_{p,1})}+\|u\|_{L^1_t(\dot
B^{1+\frac{2}{p}}_{p,1})}\Bigr).
\end{split}
\eeno Therefore, we deduce from \eqref{h.5} that \beq\label{h.9}
\begin{split}
\frak{A}(t)+\frak{A}_\e(t)\leq& C\exp\bigl(C\|u_0\|_{L^2}^2\bigr)
\Bigl\{\|u_0\|_{\dB^{-1+\f2p-\e}_{p,1}}
+\|u_0\|_{\dB^{-1+\f2p}_{p,1}} +\|\r_0-1\|_{L^\infty}^{\d(\e)}
\bigl[1\\
&\qquad
+\|\r-1\|_{L^\infty_t(\dB^{\f2p}_{p,1})}
+\|\r-1\|_{L^\infty_t(\dB^{\f2p+\e}_{p,1})}\bigr]
\bigl(\frak{A}(t)+\frak{A}_\e(t)\bigr)\Bigr\}\\
\leq&
C\exp\bigl(C\|u_0\|_{L^2}^2\bigr)
\Bigl\{\|u_0\|_{\dB^{-1+\f2p-\e}_{p,1}}
+\|u_0\|_{\dB^{-1+\f2p}_{p,1}}
+\|\r_0-1\|_{L^\infty}^{\d(\e)}
\bigl[1\\
&\qquad+\|\r_0-1\|_{\dB^{\f2p}_{p,1}}
+\|\r_0-1\|_{\dB^{\f2p+\e}_{p,1}}\bigr]
\bigl(\frak{A}(t)+\frak{A}_\e(t)\bigr)\exp\bigl(C\frak{A}(t)\bigr)\Bigr\}.
\end{split}
\eeq In particular, if $\|\r_0-1\|_{L^\infty}$ is so small that \beq
\label{h.10}
\begin{split}
C\|&\r_0-1\|_{L^\infty}^{\d(\e)}
\bigl(1+\|\r_0-1\|_{\dB^{\f2p}_{p,1}}+\|\r_0-1\|_{\dB^{\f2p+\e}_{p,1}}\bigr)\\
&\qquad\times\exp\bigl(C\|u_0\|_{L^2}^2\bigr)\exp\Bigl\{2C\exp\bigl(C\|u_0\|_{L^2}^2
\bigr)\bigl(\|u_0\|_{\dB^{-1+\f2p-\e}_{p,1}}+\|u_0\|_{\dB^{-1+\f2p}_{p,1}}\bigr)\Bigr\}\leq\f12,
\end{split}
\eeq for $\d(\e)\eqdefa\min\bigl(\th(0),\th(\e),
\f{2\e}{p},\f{p\e}{2+p\e}\bigr),$ we infer from \eqref{h.9} that
\beno \frak{A}(t)+\frak{A}_\e(t)\leq2
C\exp\bigl(C\|u_0\|_{L^2}^2\bigr)\Bigl(\|u_0\|_{\dB^{-1+\f2p-\e}_{p,1}}+\|u_0\|_{\dB^{-1+\f2p}_{p,1}}\Bigr).
\eeno With this {\it a priori} estimate, we complete the proof to
the existence part of Theorem \ref{thm1.3}. The proof to the
uniqueness part is identical to that of Theorem \ref{thm1.2}. One
only needs to use Lemma \ref{lemh.0} and Lemma \ref{lemh.1} rather
than the standard product laws to estimate $\|(1-\r_0)\p_t\d \bar
u\|_{L^1_t(\dB^{-1+\f2p}_{p,1})}$ and $\|(\mu(\r_0)-1)\na_y\bar
u\|_{L^1_t(\dB^{\f2p}_{p,1})}.$ We skip the details here.
\end{proof}

\renewcommand{\theequation}{\thesection.\arabic{equation}}
\setcounter{equation}{0}
\appendix

\setcounter{equation}{0}
\section{Littlewood-Paley analysis}\label{abbendixb}

The proofs  of  Theorem \ref{thm1.1} to Theorem \ref{thm1.3}
requires Littlewood-Paley decomposition. Let us briefly explain how
it may be built in the case $x\in\R^d$ (see e.g. \cite{BCD}). Let
$\varphi$ be a smooth function  supported in the ring
$\mathcal{C}\eqdefa \{
\xi\in\R^d,\frac{3}{4}\leq|\xi|\leq\frac{8}{3}\}$  and $\chi(\xi)$
be a smooth function  supported in the ball $\mathcal{B}\eqdefa \{
\xi\in\R^d,\ |\xi|\leq\frac{4}{3}\}$ such that
\begin{equation*}
 \sum_{j\in\Z}\varphi(2^{-j}\xi)=1 \quad\hbox{for}\quad \xi\neq
 0\quad\mbox{and}\quad \chi(\xi)+ \sum_{q\geq 0}\varphi(2^{-q}\xi)=1\quad\hbox{for \ all }\quad
 \xi\in\R^d.
\end{equation*}
Then for $u\in{\mathcal S}'_h(\R^d)$  (see Definition 1.26 of
\cite{BCD}), which means $u\in\cS'(\R^d)$ and
$\lim_{j\to-\infty}\|\chi(2^{-j}D)u\|_{L^\infty}$ $=0,$ we set
\begin{equation}\label{dydic}
\begin{split}
&\forall\ j\in\Z,\quad \dot\Delta_j
u\eqdefa\varphi(2^{-j}\textnormal{D}) u\quad\mbox{and} \quad \dot
S_j u\eqdefa\chi(2^{-j}D)u,\\
&\forall\ q\geq 0,\quad \D_qu\eqdefa \varphi(2^{-q}\textnormal{D})u,
\quad \D_{-1}u\eqdefa \chi(D)u\quad\mbox{and}\quad S_qu\eqdefa
\sum_{-1\leq q'\leq q-1}\D_{q'}u,
\end{split}
\end{equation}
 we have the formal decomposition
\begin{equation}\label{dyadica}
u=\sum_{j\in\Z}\dot\Delta_j \,u,\quad\forall\,u\in {\mathcal
{S}}'_h(\R^d)\quad \mbox{and}\quad
u=\sum_{q\geq-1}\D_q\,u\quad\forall\, u\in \cS(\R^d).
\end{equation}
 Moreover, the
Littlewood-Paley decomposition satisfies the property of almost
orthogonality:
\begin{equation}\label{Pres_orth}
\dot\Delta_j\dot\Delta_q u\equiv 0 \quad\mbox{if}\quad| j-q|\geq 2
\quad\mbox{and}\quad\dot\Delta_j(\dot S_{q-1}u\dot\Delta_q u) \equiv
0\quad\mbox{if}\quad| j-q|\geq 5.
\end{equation}

We recall now the definition of homogeneous Besov spaces and
Bernstein type inequalities from \cite{BCD}. Similar definitions in
the inhomogeneous context can be found in \cite{BCD}.

\begin{defi}\label{def1.1}[Definition 2.15 of \cite{BCD}]
{\sl   Let $(p,r)\in[1,+\infty]^2,$ $s\in\R$ and $u\in{\mathcal
S}_h'(\R^3),$  we set
$$
\|u\|_{\dB^s_{p,r}}\eqdefa\Big(2^{js}\|\dot\Delta_j
u\|_{L^{p}}\Big)_{\ell ^{r}}.
$$
\begin{itemize}

\item
For $s<\frac{3}{p}$ (or $s=\frac{3}{p}$ if $r=1$), we define $ \dot
B^s_{p,r}(\R^3)\eqdefa \big\{u\in{\mathcal S}_h'(\R^3)\;\big|\; \|
u\|_{\dB^s_{p,r}}<\infty\big\}.$

\item
If $k\in\N$ and $\frac{3}{p}+k\leq s<\frac{3}{p}+k+1$ (or
$s=\frac{3}{p}+k+1$ if $r=1$), then $ \dB^s_{p,r}(\R^3)$ is defined
as the subset of distributions $u\in{\mathcal S}_h'(\R^3)$ such that
$\partial^\beta u\in \dB^{s-k}_{p,r}(\R^3)$ whenever $|\beta|=k.$
\end{itemize}}
\end{defi}

\begin{lem}\label{lem2.1} {\sl Let $\cB$ be a ball   and $\cC$ a ring of $\R^d.$
 A constant $C$ exists so
that for any positive real number $\d,$ any non-negative integer
$k,$ any smooth homogeneous function $\sigma$ of degree $m,$ and any
couple of real numbers $(a, \; b)$ with $ b \geq a \geq 1,$ there
hold
\begin{equation}
\begin{split}
&\Supp \hat{u} \subset \d \mathcal{B} \Rightarrow \sup_{|\alpha|=k}
\|\pa^{\alpha} u\|_{L^{b}} \leq  C^{k+1}
\d^{k+ d(\frac{1}{a}-\frac{1}{b} )}\|u\|_{L^{a}},\\
& \Supp \hat{u} \subset \d \mathcal{C} \Rightarrow C^{-1-k}\d^{
k}\|u\|_{L^{a}}\leq \sup_{|\alpha|=k}\|\partial^{\alpha} u\|_{L^{a}}
\leq
C^{1+k}\d^{ k}\|u\|_{L^{a}},\\
& \Supp \hat{u} \subset \d\mathcal{C} \Rightarrow \|\sigma(D)
u\|_{L^{b}}\leq C_{\sigma, m} \d^{ m+d(\frac{1}{a}-\frac{1}{b}
)}\|u\|_{L^{a}}.
\end{split}\label{2.1}
\end{equation}}
\end{lem}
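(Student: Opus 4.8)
The plan is to derive all three inequalities from one mechanism: on a frequency ball or annulus the distribution $u$ is reproduced by a fixed smooth Fourier multiplier, so that each of the operators $\pa^\alpha$ and $\sigma(D)$ becomes convolution against a \emph{dilated} Schwartz kernel. Young's inequality together with the elementary scaling identity $\|h(\d\,\cdot)\|_{L^c}=\d^{-d/c}\|h\|_{L^c}$ will then produce the exponents $\d^{k+d(1/a-1/b)}$ and $\d^{m+d(1/a-1/b)}$ automatically, once one notes that the conjugate exponent $1/c=1-1/a+1/b$ attached to $b\ge a\ge1$ lies in $[1,\infty]$ and satisfies $1-1/c=1/a-1/b$.

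First I would prove the multiplier estimate (the third line), which is the cleanest and serves as the engine. Fix a smooth $\widetilde\phi$ supported in a ring and equal to $1$ on $\mathcal{C}$; since $\Supp\widehat u\subset\d\mathcal{C}$ one has $\sigma(D)u=h_\d* u$ with $\widehat{h_\d}(\xi)=\sigma(\xi)\widetilde\phi(\xi/\d)$, and the homogeneity $\sigma(\xi)=\d^m\sigma(\xi/\d)$ gives $h_\d(x)=\d^{m+d}h(\d x)$, where $\widehat h=\sigma\widetilde\phi$ is smooth and compactly supported (hence $h\in\cS$). Young's inequality with $1/c=1-1/a+1/b$ and the dilation identity then yield the claim with $C_{\sigma,m}=\|h\|_{L^c}$. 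The ball case (first line, which also contains the upper bound of the second line on taking $a=b$ and enclosing $\d\mathcal{C}$ in a dilated ball) follows by the same scheme with $\sigma$ replaced by the monomial $\xi^\alpha$: choosing $\phi$ smooth, supported in a fixed ball of radius $R$ and equal to $1$ on $\mathcal{B}$, one gets $\pa^\alpha u=k^\d_\alpha* u$ with $k^\d_\alpha(x)=\d^{k+d}K_\alpha(\d x)$ and $K_\alpha=\cF^{-1}[(i\eta)^\alpha\phi]$, so that $\|\pa^\alpha u\|_{L^b}\le\d^{k+d(1/a-1/b)}\|K_\alpha\|_{L^c}\|u\|_{L^a}$.

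The hard part will be to upgrade the constants $\|K_\alpha\|_{L^c}$ (and, below, the $L^1$ norms of the reconstruction kernels) to the uniform exponential bound $C^{k+1}$ demanded by the statement, since a naive derivative count gives only growth like $k!$. To control $\|K_\alpha\|_{L^c}$ uniformly over $|\alpha|=k$ and $c\in[1,\infty]$ I would bound the two endpoints and interpolate. At $c=\infty$, $\|K_\alpha\|_{L^\infty}\le\|(i\eta)^\alpha\phi\|_{L^1}\le R^k\|\phi\|_{L^1}$ from the support radius. At $c=1$ I would pass to weighted $L^2$ via Plancherel: fixing an integer $s>d/2$, $\|K_\alpha\|_{L^1}\lesssim\|(1+|x|)^s K_\alpha\|_{L^2}\lesssim\|(1-\Delta_\eta)^{s/2}\bigl((i\eta)^\alpha\phi\bigr)\|_{L^2}$, and Leibniz's rule with the pointwise bound $|\pa^\gamma\eta^\alpha|\le k^{|\gamma|}R^{k-|\gamma|}$ on $\Supp\phi$ gives $\|K_\alpha\|_{L^1}\le C_s\,k^s R^k\le C^{k+1}$. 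Interpolation then yields $\|K_\alpha\|_{L^c}\le C^{k+1}$ for every $c$, and summing over the polynomially many multi-indices with $|\alpha|=k$ leaves the constant of the same form.

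Finally, for the reverse Bernstein bound (lower half of the second line) I would reconstruct $u$ from its top-order derivatives on the annulus. Since $\sum_{|\beta|=k}|\xi^\beta|^2\sim|\xi|^{2k}$ is bounded below on $\mathcal{C}$, the symbols $m_\alpha(\xi)=\overline{(i\xi)^\alpha}\big/\sum_{|\beta|=k}|\xi^\beta|^2$, homogeneous of degree $-k$, satisfy $\sum_{|\alpha|=k}m_\alpha(\xi)(i\xi)^\alpha=1$; inserting the cutoff $\widetilde\phi(\xi/\d)$ gives $u=\sum_{|\alpha|=k}\bigl(m_\alpha\widetilde\phi(\cdot/\d)\bigr)(D)\,\pa^\alpha u$. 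Writing $m_\alpha(\xi)\widetilde\phi(\xi/\d)=\d^{-k}\Theta_\alpha(\xi/\d)$ with $\Theta_\alpha=m_\alpha\widetilde\phi$ fixed (smooth, compactly supported away from the origin), Young's inequality at $a=b$ ($c=1$) gives an operator norm $\d^{-k}\|\cF^{-1}\Theta_\alpha\|_{L^1}$, the dilation being $L^1$-norm preserving. Controlling $\|\cF^{-1}\Theta_\alpha\|_{L^1}\le C^{k+1}$ exactly as in the previous paragraph and absorbing the count of multi-indices, one obtains $\|u\|_{L^a}\le C^{k+1}\d^{-k}\sup_{|\alpha|=k}\|\pa^\alpha u\|_{L^a}$, which rearranges to the stated lower bound. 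Thus the whole lemma reduces to Young's inequality and scaling, the sole genuine difficulty being the $k$-uniform kernel estimates addressed above.
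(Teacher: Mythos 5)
The paper itself offers no proof of this lemma: it is recalled from \cite{BCD}, where the argument is precisely the one you give — reproducing $u$ through a dilated cutoff multiplier, Young's inequality with $\frac1c=1-\frac1a+\frac1b$ plus the scaling identity for the exponents, weighted-$L^2$/Plancherel estimates of the kernels to obtain the $C^{k+1}$ growth, and, for the reverse inequality, an algebraic partition of unity of the form $\sum_{|\alpha|=k}m_\alpha(\xi)(i\xi)^\alpha=1$ with $m_\alpha$ homogeneous of degree $-k$. Your proposal is correct and essentially coincides with that classical proof (your denominator $\sum_{|\beta|=k}|\xi^\beta|^2$ is a harmless variant of the $|\xi|^{2k}$ used there, comparable to it by the multinomial theorem up to factors $d^{\pm k}$ that the $C^{k+1}$ tolerance absorbs).
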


We also recall  Bony's decomposition  from \cite{Bo}:
\begin{equation}
\begin{split}
&uv=T_u v+R(u,v)=T_u v+T_v u+\cR(u,v), \end{split}\label{bony}
\end{equation}
where
\begin{equation*}
\begin{split}
&T_u v\eqdefa\sum_{j\in \mathbb{Z}}\dot S_{j-1}u\dot\Delta_j
v,\qquad
R(u,v)\eqdefa\sum_{j\in\Z}\dot\Delta_j u \dot S_{j+2}v,\\
&\cR(u,v)\eqdefa\sum_{j\in\Z}\dot\Delta_j u
\dot{\widetilde{\Delta}}_{j}v\quad  \mbox{with}\quad
\dot{\widetilde{\Delta}}_{j}v\eqdefa \sum_{|j'-j|\leq
1}\dot\Delta_{j'}v.
\end{split}
\end{equation*}

To prove Theorem \ref{est.app}, we need the following lemma
concerning the commutator estimates, the proof of which is a
standard application of Basic Littlewood-Paley theory.

\begin{lem}\label{lemd.1}
{\sl Let $s>0,$ $a\in \dot H^{1+s}\cap Lip(\R^2)$ and $b\in
L^\infty\cap \mbox{Lip}\cap H^s(\R^2).$  Then there holds \beno
\begin{split}
&\|[\dot\D_j; a]\na b\|_{L^2}\lesssim c_j2^{-js}\bigl(\|\na
a\|_{L^\infty}\|b\|_{\dot H^s}+\|\na b\|_{L^\infty}\|a\|_{\dot
H^s}\bigr),\\
&\|[\dot\D_j; a]\na b\|_{L^2}\lesssim c_j2^{-js}\bigl(\|\na
a\|_{L^\infty}\|b\|_{\dot H^s}+\| b\|_{L^\infty}\|a\|_{\dot
H^{1+s}}\bigr),\\
&\|[\dot\D_j; a]\na b\|_{L^2}\lesssim c_j2^{-js}\bigl(\|\na
a\|_{L^\infty}\|b\|_{\dot H^s}+\|a\|_{\dot H^{1+s}}\|\na
b\|_{L^2}\bigr).
\end{split}
\eeno }\end{lem}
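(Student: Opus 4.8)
The plan is to reduce each of the three estimates to the behaviour of a single genuine commutator by means of Bony's decomposition \eqref{bony}. Writing $[\dot\Delta_j;a]\na b=\dot\Delta_j(a\na b)-a\,\dot\Delta_j\na b$ and expanding both products with \eqref{bony}, I would group the terms as
\[
[\dot\Delta_j;a]\na b=[\dot\Delta_j;T_a]\na b+\bigl(\dot\Delta_j T_{\na b}a-T_{\dot\Delta_j\na b}a\bigr)+\bigl(\dot\Delta_j R(a,\na b)-R(a,\dot\Delta_j\na b)\bigr),
\]
where $[\dot\Delta_j;T_a]\na b\eqdef\dot\Delta_j(T_a\na b)-T_a(\dot\Delta_j\na b)$. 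The first bracket is the only place where cancellation is exploited; the remaining four pieces are plain products that I would bound directly. All three inequalities share the same first bracket, so the common term $\|\na a\|_{L^\infty}\|b\|_{\dot H^s}$ should come entirely from it, while the second term in each inequality comes from the four residual pieces, estimated in three slightly different ways.

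For the genuine commutator, I would use the convolution kernel of $\dot\Delta_j$: writing $\dot\Delta_jf(x)=2^{2j}\int h(2^jz)f(x-z)\,dz$, only indices $|k-j|\le4$ survive in $T_a$, and
\[
[\dot\Delta_j;\dot S_{k-1}a]\dot\Delta_k\na b(x)=2^{2j}\!\int h(2^jz)\bigl(\dot S_{k-1}a(x-z)-\dot S_{k-1}a(x)\bigr)\dot\Delta_k\na b(x-z)\,dz.
\]
The mean value theorem gives $|\dot S_{k-1}a(x-z)-\dot S_{k-1}a(x)|\le|z|\,\|\na a\|_{L^\infty}$, so after a change of variables the factor $|z|h(2^jz)$ produces a gain $2^{-j}$; combined with $\|\dot\Delta_k\na b\|_{L^2}\sim2^{k}\,2^{-ks}c_k\|b\|_{\dot H^s}$ and $2^{k}\sim2^{j}$ this yields $\|[\dot\Delta_j;T_a]\na b\|_{L^2}\lesssim c_j2^{-js}\|\na a\|_{L^\infty}\|b\|_{\dot H^s}$, the common term in all three bounds.

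For the four product pieces I would apply the Bernstein inequalities of Lemma \ref{lem2.1} together with H\"older's inequality, the only structural input being the spectral localizations: $\dot S_{k-1}(\dot\Delta_j\na b)$ vanishes unless $k\gtrsim j$, $\dot{\widetilde\Delta}_k\dot\Delta_j\equiv0$ unless $|k-j|\le2$, and the remainder couples blocks with $k\ge j-N_0$. The three versions then differ only in which factor is placed in $L^\infty$ and where the gradient is allocated: for the first inequality I keep the derivative on $b$ and put $\dot\Delta_k a$ in $L^2$, producing $\|\na b\|_{L^\infty}\|a\|_{\dot H^s}$; for the second I transfer the derivative onto $a$ via $\|\dot\Delta_k\na b\|_{L^\infty}\lesssim2^k\|b\|_{L^\infty}$ and use $\|\dot\Delta_k a\|_{L^2}=2^{-k(1+s)}c_k\|a\|_{\dot H^{1+s}}$, producing $\|b\|_{L^\infty}\|a\|_{\dot H^{1+s}}$; for the third I put $\dot\Delta_k a$ in $L^\infty$ through $\|\dot\Delta_k a\|_{L^\infty}\lesssim2^{-ks}c_k\|a\|_{\dot H^{1+s}}$ and keep $\na b$ in $L^2$, producing $\|a\|_{\dot H^{1+s}}\|\na b\|_{L^2}$. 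In every case the dyadic summation converges precisely because $s>0$, the series $\sum2^{-(k-j)s}$ over $k\ge j-N_0$ being summable, and a convolution/Cauchy--Schwarz argument recovers a generic $\ell^2$ sequence $c_j$.

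The analytic heart of the argument is thus the single mean value estimate for the true commutator; everything else is routine paraproduct bookkeeping. Accordingly, the main obstacle I anticipate is not any individual estimate but organizing the four residual product terms so that the \emph{same} decomposition simultaneously yields all three right-hand sides, and checking that the hypotheses $a\in\dot H^{1+s}\cap\mathrm{Lip}$ and $b\in L^\infty\cap\mathrm{Lip}\cap H^s$ render each appearing norm finite, so that the borderline summations (which use only $s>0$, with no upper restriction on $s$) are legitimate.
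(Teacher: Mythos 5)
Your proposal is correct, and it is precisely the ``standard application of basic Littlewood--Paley theory'' that the paper invokes: the paper states Lemma\refer{lemd.1} without proof, and the canonical argument (as in \cite{BCD}) is exactly your decomposition $[\dot\D_j;a]\na b=[\dot\D_j;T_a]\na b+\bigl(\dot\D_j T_{\na b}a-T_{\dot\D_j\na b}a\bigr)+\bigl(\dot\D_j \cR(a,\na b)-\cR(a,\dot\D_j\na b)\bigr)$, with the mean-value/kernel estimate on the paraproduct commutator yielding the common term $\|\na a\|_{L^\infty}\|b\|_{\dot H^s}$ and the remaining pieces estimated by Bernstein and H\"older in the three ways you describe (the summations over $k\geq j-N_0$ converging since $s>0$). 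The only points worth double-checking in a write-up are that the bound $\|\dot\D_k a\|_{L^\infty}\lesssim c_k2^{-ks}\|a\|_{\dot H^{1+s}}$ uses Bernstein in dimension $d=2$, and that in the first inequality the finiteness of $\|a\|_{\dot H^s}$ is an implicit hypothesis of the statement rather than a consequence of $a\in\dot H^{1+s}\cap \mbox{Lip}$.
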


In  order to obtain a better description of the regularizing effect
of the transport-diffusion equation, we need to use Chemin-Lerner
type spaces $\widetilde{L}^{\lambda}_T(\dot B^s_{p,r}(\R^d))$ from
\cite{BCD}.
\begin{defi}\label{chaleur+}
Let $(r,\lambda,p)\in[1,\,+\infty]^3$ and $T\in]0,\,+\infty]$. We
define $\widetilde{L}^{\lambda}_T(\dot B^s_{p\,r}(\R^d))$ as the
completion of $C([0,T]; \,\cS(\R^d))$ by the norm
$$
\| f\|_{\widetilde{L}^{\lambda}_T(\dot B^s_{p,r})} \eqdefa
\Big(\sum_{j\in\Z}2^{jrs} \Big(\int_0^T\|\dot\Delta_j\,f(t)
\|_{L^p}^{\lambda}\, dt\Big)^{\frac{r}{\lambda}}\Big)^{\frac{1}{r}}
<\infty.
$$
with the usual change if $r=\infty.$  For short, we just denote this
space by $\widetilde{L}^{\lambda}_T(\dot B^s_{p,r}).$
\end{defi}

\noindent {\bf Acknowledgments.} We would like to thank Marius Paicu
for sending us the preprint \cite{HP1}, which motivates us to
improve our original local version of Theorem \ref{thm1.1} to the
present global version. We also thank Zhifei Zhang for profitable
discussions. Part of this work was done when we were visiting
Morningside Center of Mathematics, CAS, in the summer of 2012. We
appreciate the hospitality and the financial support from the
center.  P. Zhang is partially supported by NSF of China under Grant
10421101 and 10931007, the one hundred talents' plan from Chinese
Academy of Sciences under Grant GJHZ200829 and innovation grant from
National Center for Mathematics and Interdisciplinary Sciences.

\end{document}